\newtheorem{thm}{Theorem}
\newtheorem{remark}{Remark}
\newtheorem{assumption}{Assumption}
\newtheorem{example}{Example}
\newcommand{\one}{\mathds{1}_n}
\newcommand{\vg}{{\mathbf{g}}}
\newcommand{\vx}{{\mathbf{x}}}
\newcommand{\vy}{{\mathbf{y}}}
\newcommand{\cF}{{\mathcal{F}}}
\newcommand{\cG}{{\mathcal{G}}}
\newcommand{\cO}{{\mathcal{O}}}
\newcommand{\EE}{\mathbb{E}}
\newcommand{\RR}{\mathbb{R}}
\newcommand{\pushsum}{{\sc Push-Sum}\xspace}
\newcommand{\pushonly}{{\sc Push-Only}\xspace}
\newcommand{\pushpull}{{\sc Push-Pull}\xspace}
\newcommand{\pushdg}{{\sc Push-DIGing}\xspace}
\newcommand{\pullonly}{{\sc Pull-Only}\xspace}
\newcommand{\extrapush}{{\sc EXTRA-Push}\xspace}
\newcommand{\dextra}{{\sc DEXTRA}\xspace}
\newcommand{\addopt}{{\sc ADD-OPT}\xspace}
\newcommand{\saddopt}{{\sc S-ADDOPT}\xspace}
\newcommand{\ab}{{\sc AB}\xspace}
\newcommand{\subpush}{{\sc Subgradient-Push}\xspace}
\newcommand{\sgdpush}{{\sc SGD-Push}\xspace}
\newcommand{\norm}[1]{\| #1 \|}
\newcommand{\ip}[1]{\left\langle#1\right\rangle}
\newcommand{\ie}{{\em i.e.\xspace}}
\definecolor{gan}{RGB}{128,128,255}
\newcommand{\pulldiag}{{\sc Pull-Diag}\xspace}
\newcommand{\revision}[1]{\textcolor{black}{#1}}
\newtheorem{theorem}[thm]{Theorem}
\newtheorem{lemma}[thm]{Lemma}
\newtheorem{proposition}[thm]{Proposition}
\newcommand{\email}[1]{\href{mailto:#1}{#1}}
\title{
On the Linear Speedup of the Push-Pull Method for Decentralized Optimization over Digraphs
}
\author{Liyuan Liang\thanks{Equal contribution. Liyuan Liang and Gan Luo are with School of Mathematics Science, Peking University (\email{liangliyuangg@gmail.com, 2200013522@stu.pku.edu.cn}), }
\and Gan Luo{$^*$}
\and Kun Yuan\thanks{Corresponding author. Kun Yuan is with Center for Machine Learning Research, Peking University (\email{kunyuan@pku.edu.cn})}}
\date{}
\begin{document}
\maketitle

\begin{abstract}
The linear speedup property is essential for demonstrating the advantage of distributed algorithms over their single-node counterparts. This paper studies the stochastic \pushpull method, a widely adopted decentralized optimization algorithm for directed graphs (digraphs). Unlike methods that rely on \pushonly or \pullonly communications, \pushpull avoids nonlinear correction and has shown superior empirical performance across various settings. Despite its success, whether \pushpull achieves linear speedup has remained an open question, revealing a significant gap between its empirical performance and theoretical understanding. To bridge this gap, we propose a novel multi-step descent analysis framework and prove that \pushpull indeed achieves linear speedup over arbitrary strongly connected digraphs. Our results provide a comprehensive theoretical understanding of the stochastic \pushpull method, finally aligning its theoretical convergence with empirical performance.
Code: \href{https://github.com/pkumelon/PushPull}{https://github.com/pkumelon/PushPull}. 

\end{abstract}
\vspace{-4pt}


\section{Introduction}
Scaling modern machine learning tasks to massive datasets and increasingly large models requires efficient distributed computation across multiple nodes. In distributed optimization, a network of $n$ nodes cooperatively solves  
\begin{align}\label{prob-general}
\min_{x\in \mathbb{R}^d}\quad f(x) := \frac{1}{n}\sum_{i=1}^n f_i(x) \quad \mbox{where} \quad f_i(x) = \mathbb{E}_{\xi_i \sim \mathcal{D}_i}[F_i(x; \xi_i)].
\end{align}
where each local function $f_i$ is defined by a data distribution $\mathcal{D}_i$. Each node has access only to stochastic gradients $\nabla F_i(x;\xi_i)$ from its local data, while communication is required to aggregate information across the network. In practice, the distributions $\{\mathcal{D}_i\}_{i=1}^n$ are heterogeneous, so the local losses $f_i$ generally differ. This heterogeneity poses significant challenges to both the design and analysis of distributed algorithms.  

A natural baseline for solving~\eqref{prob-general} is centralized stochastic gradient descent (SGD), where gradients are aggregated globally either through a central server~\cite{li2014scaling} or collective communication primitives such as Ring All-Reduce~\cite{patarasuk2009bandwidth}. While centralized SGD enjoys strong theoretical guarantees---including the celebrated linear speedup property in iteration complexity---its reliance on global averaging leads to significant communication overhead and creates a single point of failure at the server. These drawbacks limit scalability in large, heterogeneous systems.  

Decentralized SGD has emerged as a compelling alternative~\cite{chen2012diffusion,nedic2009distributed}. In this setting, each node maintains its own local model and communicates only with its neighbors. This localized communication substantially reduces bandwidth usage and alleviates latency bottlenecks. Moreover, decentralized methods are inherently robust to failures: as long as the communication graph remains connected, the network can continue to train without dependence on a single master node.  

In this paper, we consider decentralized optimization over \emph{directed graphs (digraphs)}. Directed communication naturally models many real-world systems, including heterogeneous robotic swarms with asymmetric links, sensor networks with one-sided message passing, and distributed deep learning environments where communication is constrained by bandwidth asymmetry. Directed networks are also attractive because certain classes of sparse digraphs, such as exponential graphs~\cite{assran2019stochastic}, EquiTopo~\cite{song2022communication}, and B-ary trees~\cite{you2024bary}, achieve both scalability and strong connectivity.  

\subsection{Decentralized algorithms over digraphs} In a connected network, the topology can be characterized by a mixing matrix. This matrix reflects how nodes are connected and is important for understanding how the network affects algorithm performance. For undirected networks, it is straightforward to construct symmetric and \textit{doubly stochastic} mixing matrices. For directed networks, achieving a doubly stochastic matrix is generally infeasible. Consequently, mixing matrices in such cases are typically either \textit{column-stochastic}~\cite{Olshevsky2015Dist, Nedic2017pushdiging} or \textit{row-stochastic}~\cite{sayed2014adaptive, mai2016distributed}, but not both.

Building on the connectivity pattern of the underlying network topology, decentralized optimization algorithms over directed graphs can be broadly categorized into three families in the literature. The first family relies on the \pushsum technique \cite{kempe2003gossip, tsianos2012push}, which employs column-stochastic mixing matrices for all communications. Accordingly, we refer to this class as \pushonly algorithms in this paper. When exact gradients $\nabla f_i$ are available, the well-known \subpush algorithm \cite{Olshevsky2015Dist, tsianos2012push} converges to the desired solution, albeit with relatively slow sublinear rates even under strong convexity. More advanced methods such as \extrapush \cite{zeng2017extrapush}, \dextra \cite{xi2017dextra}, \addopt \cite{xi2017add}, and \pushdg \cite{Nedic2017pushdiging} improve convergence rate by mitigating the effects of data heterogeneity. Stochastic variants, including \sgdpush \cite{assran2019stochastic} and \saddopt \cite{qureshi2020s}, extend these approaches to settings with stochastic gradients.

The second family consists of algorithms that use only row-stochastic matrices for communication, referred to as the \pullonly methods. Just as \pushsum underpins the design of \pushonly algorithms, the \pulldiag gossip protocol~\cite{mai2016distributed} serves as the foundation for \pullonly methods. Reference \cite{mai2016distributed} adapted the distributed gradient descent algorithm to the \pullonly setting. Subsequently, gradient tracking techniques were extended to this framework by~\cite{li2019row, FROST-Xinran}, while momentum-based \pullonly\ gradient tracking methods were developed in~\cite{ghaderyan2023fast, lu2020nesterov}.

The third family comprises algorithms that alternate between row-stochastic and column-stochastic mixing matrices, commonly referred to as the \pushpull method~\cite{pu2020push} or the \ab method~\cite{xin2018linear}. In this framework, a row-stochastic matrix is used to exchange model parameters, while a column-stochastic matrix is employed to share gradient information. As demonstrated in~\cite{xin2018linear, pu2020push, nedic2023ab}, the \pushpull family typically achieves significantly faster convergence rates than algorithms relying solely on column- or row-stochastic matrices. Reference \cite{xin2019SAB,zhao2023asymptotic} extended the \pushpull method to stochastic optimization settings, while \cite{you2024bary} improved its convergence rate on B-ary tree network topologies. Accelerated or constrained variant of the \pushpull method have also been explored in~\cite{nguyen2023accelerated, gong2023push}.

\subsection{Linear speedup in iteration complexity} The \emph{linear speedup} is a central property for distributed optimization. \revision{In the smooth nonconvex setting considered in this paper, $\epsilon$-accuracy for an iterate sequence $\{x^{(t)}\}_{t=0}^{T-1}$ is measured by the stationarity criterion $\min_{0\le t\le T-1}\EE[\norm{\nabla f(x^{(t)})}^2]\le \epsilon$. Linear speedup then means that the number of iterations $T$ required to achieve this criterion decreases linearly with the number of nodes $n$.} Centralized SGD achieves this scaling, converging in $\mathcal{O}(1/(n\epsilon^2))$ iterations. Without this property, the benefit of parallelization is limited, as increasing the number of nodes would not yield proportional gains in iteration complexity.  

For decentralized algorithms on \emph{undirected networks}, linear speedup is well established. The first rigorous analysis for decentralized SGD was given in~\cite{lian2017can}. Building on this work, a range of advanced algorithms---including Exact Diffusion/NIDS~\cite{yuan2018exact,li2019decentralized} and gradient tracking methods~\cite{xu2015augmented,di2016next,qu2017harnessing,Nedic2017pushdiging}---have been proven to achieve linear speedup~\cite{pu2021distributed}. Time-varying network extensions are also addressed in~\cite{koloskova2020unified}.  

For \emph{digraphs}, theoretical understandings are less  explored. For \pushonly algorithms with column-stochastic mixing, linear speedup has been established in \cite{assran2019stochastic,kungurtsev2023decentralized}, with subsequent refinements achieving optimal iteration complexity in \cite{liang2025understanding}. For \pullonly algorithms with row-stochastic mixing, convergence has been extensively analyzed under various settings \cite{mai2016distributed,li2019row,FROST-Xinran,ghaderyan2023fast,lu2020nesterov}, but these studies did not establish linear speedup because row-stochastic matrices inherently introduce a deviation from the globally averaged gradient. Only recently, linear speedup for \pullonly algorithms was proven in \cite{liang2025achieving}. These advances reveal the  effectiveness of both \pushonly and \pullonly methods over digraphs.

Despite this progress, establishing the linear speedup property for \pushpull\ algorithms—those combining both row- and column-stochastic matrices—has remained an open problem. While \pushpull\ methods consistently outperform \pushonly\ and \pullonly\ variants in practice, their theoretical understanding remains limited. Existing analyses focus mostly on strongly convex settings~\cite{xin2018linear, xin2019SAB, pu2020push, you2024bary}, and provide no guarantees for nonconvex or stochastic optimization. 
A recent work~\cite{ying2025exact} establishes guarantees for \pushpull\ in a specific non-convex federated learning setting, but does not prove the linear speedup property. 
\revision{
The most relevant work~\cite{you2025distributed} establishes linear speedup for the Spanning Tree Push-Pull~(STPP) method, which is a variant of Push-Pull based on specialized spanning-tree topology and mixing designs. Its analysis provides a clear topology-dependent polynomial transient-time characterization. This is an important advance in sharpening the finite-time guarantees of Push-Pull type methods. However, it does not resolve the question of whether the original \pushpull\ algorithm, with general row-/column-stochastic mixing matrices, enjoys linear speedup under general directed topologies.}
Table~\ref{table::comparision} provides a summary of existing results on decentralized algorithms over digraphs.

\subsection{Motivation and main results} The above discussion highlights a striking gap between \emph{theory and practice}: \pushpull algorithms consistently outperform \pushonly and \pullonly methods in empirical studies, yet their scalability theory is far less developed. This gap motivates two fundamental questions:  

\vspace{2mm}
\begin{itemize}
\item[Q1.] {What is the intrinsic difference between the \pushpull method and the other two families, \ie, \pushonly and \pullonly? Why is the theoretical analysis of \pushpull more challenging?}

\vspace{1mm}
\item[Q2.] Can push--pull achieve linear speedup on general strongly connected digraphs under standard assumptions?  
\end{itemize}

\vspace{2mm}
In this work, we address both questions. Regarding Question~Q1, we show that \pushonly\ and \pullonly\ can be reformulated to align closely with centralized SGD, making their linear speedup analysis relatively straightforward. In contrast, \pushpull\ exhibits a persistent, non-vanishing deviation from centralized SGD dynamics, rendering the conventional single-step descent analysis framework ineffective.  

For Question~Q2, we introduce a novel \emph{multi-step descent analysis framework} that aggregates errors over blocks of iterations. This framework uncovers a telescoping structure in the gradient-tracking recursion: intermediate gradient noise cancels out, while boundary terms are exponentially attenuated by mixing dynamics. Consequently, the residual error remains bounded at a constant scale, enabling us to establish the desired $\mathcal{O}(1/(n\epsilon^2))$ iteration complexity for \pushpull.  

To the best of our knowledge, this paper presents the \emph{first general proof} that \pushpull\ achieves linear speedup on arbitrary strongly connected digraphs in the stochastic nonconvex setting. In particular, our multi-step descent analysis framework not only bridges the gap between theory and practice for \pushpull\ but also introduces techniques that may be of independent interest for the broader study of decentralized algorithms. A summary of existing results is provided in Table~\ref{table::comparision}.  

\begin{table}[t]
\centering
\setlength{\tabcolsep}{9pt} 
\caption{Convergence rate comparison of representative decentralized algorithms in non-convex and stochastic settings over directed graphs. ``Conv. Rate'' refers to the asymptotic rate as \( T \to \infty \); ``Iter. Complex.'' denotes the number of iteration requried to achieve $\epsilon$-accuracy; ``S.C.'' denotes ``Strongly Connected''; ``R-Sto.'', ``C-Sto.'' and ``R/C Alt.'' refer to ``Row-Stochastic'', ``Column-Stochastic'' and ``Row/Column Alternate'', respectively. Note that References~\cite{xin2019SAB,zhao2023asymptotic} study the Push-Pull algorithm under strongly convex objectives, rather than the non-convex setting. $\sigma$ denotes gradient noise.} 
\label{table::comparision}
\begin{tabular}{lcclc}
\toprule
\multicolumn{1}{c}{\textbf{Algorithms}} & \multicolumn{1}{c}{\textbf{\begin{tabular}[c]{@{}c@{}}Conv.\\ Rate\end{tabular}}} & \multicolumn{1}{c}{\textbf{\begin{tabular}[c]{@{}c@{}}Iter.\\ Complex.\end{tabular}}} & \multicolumn{1}{c}{\textbf{Topology}} & \textbf{\begin{tabular}[c]{@{}c@{}}Mixing\\ Matrix\end{tabular}} \\ \midrule
\sc{\small Centralized SGD} & $\mathcal{O}(\frac{\sigma}{\sqrt{nT}})$ & $\mathcal{O}(\frac{\sigma^2}{n\epsilon^2})$ & Star graph & $-$ \\ \midrule
\sc {\small Pull-Diag-GT}~\cite{liang2025achieving} & $\mathcal{O}(\frac{\sigma}{\sqrt{nT}})$ & $\mathcal{O}(\frac{\sigma^2}{n\epsilon^2})$ & S.C. digraph & R-Sto. \vspace{1mm} \\
\sc {\small Push-Diging}~\cite{liang2025understanding} & $\mathcal{O}(\frac{\sigma}{\sqrt{nT}})$ & $\mathcal{O}(\frac{\sigma^2}{n\epsilon^2})$ & S.C. digraph & C-Sto. \\ \midrule
{\small \pushpull}~\cite{xin2019SAB} & $-$ & $\;\;\;-\;\;$ & S.C. digraph & R/C Alt. \\
{\small \pushpull}~\cite{zhao2023asymptotic} & $-$ & $\;\;\;-\;\;$ & S.C. digraph & R/C Alt. \\ 
{\small \pushpull}~\cite{ying2025exact} & $\mathcal{O}(\frac{\sigma}{\sqrt{T}})$ & $\mathcal{O}(\frac{\sigma^2}{\epsilon^2})$ & S.C. digraph & R/C Alt. \\ 
{\small \pushpull}~\cite{you2024bary} & $\mathcal{O}(\frac{\sigma}{\sqrt{nT}})$ & $\mathcal{O}(\frac{\sigma^2}{n\epsilon^2})$ & B-ary Tree & R/C Alt. \\ 
{\small \pushpull} \textbf{(Ours)} & $\mathcal{O}(\frac{\sigma}{\sqrt{nT}})$ & $\mathcal{O}(\frac{\sigma^2}{n\epsilon^2})$ & S.C. digraph & R/C Alt. \\ 
\bottomrule
\end{tabular}
\vspace{-4mm}
\end{table}

The remainder of the paper is organized as follows. Section~\ref{sec:notation} introduces the notations, assumptions, and performance metrics used throughout the paper. Section~\ref{sec:Preliminary} reviews the \pushonly, \pullonly, and \pushpull\ methods and provides insights into the derivation of their linear speedup properties. Section~\ref{sec:linear speedup} presents our novel multi-step descent analysis framework and explains the intuition behind its role in enabling linear speedup analysis. Section~\ref{sec:linear speedup 2} formally develops the supporting lemmas and states the main convergence theorem. Section~\ref{sec:experiment} reports extensive numerical experiments that validate the ability of the \pushpull\ method to achieve linear speedup. Finally, Section~\ref{sec:conclusion} concludes the paper.

\section{Notations, Algorithms, Assumptions and Metrics}\label{sec:notation}
This section introduces notations, assumptions, and metrics that will be used throughout the paper.

\textbf{Notations.} The directed network is denoted by $\mathcal{G} = (\mathcal{N}, \mathcal{E})$, where $\mathcal{N} = \{1,2,\ldots,n\}$ is the set of nodes and $\mathcal{E}$ the set of directed edges. An edge $(j,i)\in\mathcal{E}$ indicates that node $j$ can send information to node $i$. The graph is \textit{strongly connected} if a directed path exists between every pair of nodes. We say $\mathcal{G}$ is the underlying topology of a nonnegative weight matrix $A=[a_{ij}]$ if $a_{ij}>0$ precisely when $(j,i)\in\mathcal{E}$. Let $\one\in\mathbb{R}^n$ denote the all-ones vector and $[n]:=\{1,2,\ldots,n\}$. For a vector $v$, $\mathrm{Diag}(v)$ denotes the diagonal matrix with $v$ on its diagonal, while $\min v$ and $\max v$ denote its smallest and largest entries, respectively.

We let $A$ denote a row-stochastic matrix ($A\one=\one$) and $B$ a column-stochastic matrix ($B^\top\one=\one$), with Perron vectors~\cite{Perron1907} $\pi_A$ and $\pi_B$, respectively. Define $A_\infty := \one \pi_A^\top$ and $B_\infty := \pi_B \one^\top$. 

Additionally, we define \(n \times d\) matrices
\begin{align*}
                \vx^{(t)} &\hspace{-0.5mm}:=\hspace{-0.5mm} [( {x}_1^{(t)})^\top\hspace{-0.5mm}; ( {x}_2^{(t)})^\top\hspace{-0.5mm}; \cdots; ( {x}_n^{(t)})^\top\hspace{-0.5mm}]  \\
\hspace{-0.5mm} \vg^{(t)} &\hspace{-0.5mm}:=\hspace{-0.5mm} [\nabla F_1( {x}_1^{(t)};\hspace{-0.5mm}\xi_1^{(t)})^\top\hspace{-0.5mm};\hspace{-0.5mm} \cdots\hspace{-0.5mm};\hspace{-0.5mm} \nabla F_n( {x}_n^{(t)};\xi_n^{(t)})^\top\hspace{-0.5mm}] \\
\nabla f(\vx^{(t)}) &\hspace{-0.5mm}:=\hspace{-0.5mm} [\nabla f_1({x}^{(t)}_1)^\top;
\cdots;\nabla f_n({  x}^{(t)}_n)^\top] 
\end{align*}
where subscripts index nodes and superscripts index iterations. Let $\cF_t$ be the filtration containing all information up to $\vx^{(t)}$, excluding the stochastic gradients at iteration $t$. We use $\EE_t[\cdot]=\EE[\cdot\mid \cF_t]$ for conditional expectation. Norms are denoted as follows: $\|\cdot\|$ for the vector $\ell_2$-norm, $\|\cdot\|_F$ for the Frobenius norm, and $\|\cdot\|_2 := \max_{\|v\|=1}\|Av\|$ for the matrix spectral norm.

\vspace{0.5mm}
\textbf{Push-Pull algorithm.} The \pushpull algorithm, initially proposed for deterministic decentralized optimization~\cite{pu2020push,xin2018linear}, can be naturally extended to the stochastic gradient setting~\cite{xin2019SAB}:
\begin{subequations}
\label{eq:wenads}
\begin{align}
    \vx^{(t+1)}&=A\vx^{(t)}-\alpha \vy^{(t)}\label{eq:pushpull-1},\\
\vy^{(t+1)}&=B\vy^{(t)}+\vg^{(t+1)}-\vg^{(t)},\label{eq:pushpull-2}
\end{align}
\end{subequations}
where \revision{$\vg^{(t)}$ denotes the stacked stochastic gradients, and} $A$, $B$ are row-stochastic and column-stochastic matrices determined \revision{respectively} by the underlying digraph. For initialization, we let  $x_1^{(0)}=x_2^{(0)}=\cdots=x_n^{(0)}=x^{(0)}$ and $\vy^{(0)}=\vg^{(0)}$. The row-stochastic mixing matrix \( A \) in~\eqref{eq:pushpull-1} facilitates consensus among the local variables maintained by each node, whereas the column-stochastic mixing matrix \( B \) in~\eqref{eq:pushpull-2} enables the computation of a globally averaged gradient. In the absence of gradient noise, \ie, when 
\( \vg^{(t)} = \nabla f(\vx^{(t)})
\),
it has been shown in~\cite{pu2020push,xin2018linear} that \( \vx^{(t+1)} \) converges to 
\( \vx^\star = [(x^\star)^\top;\cdots;(x^\star)^\top] \in \mathbb{R}^{n \times d} \) for strongly convex problems, where \( x^\star \) is the solution to problem~\eqref{prob-general}.

\vspace{0.5mm}
\textbf{Assumptions.} 
We first specify the underlying graphs (\(\mathcal{G}_A\), \(\mathcal{G}_B\)) and the corresponding weight matrices (\(A\), \(B\)) used in the stochastic \pushpull method.
\begin{assumption}[\sc \small Communication Graph]\label{ass:graph}
     Assume $\cG_A$ and $\cG_B$ to be strongly connected digraphs (see the definition in notations) with $n$ nodes.
\end{assumption}

\begin{assumption}[\sc \small Weight Matrix]\label{ass:matrix}
    The weight matrix \(A\) is row-stochastic with underlying digraph \(\mathcal{G}_A\), and the weight matrix \(B\) is column-stochastic with underlying digraph \(\mathcal{G}_B\). Both \(A\) and \(B\) have positive trace.
\end{assumption}

Assumptions~\ref{ass:graph} and~\ref{ass:matrix} are standard in the analysis of the \pushpull method~\cite{nedic2023ab,xin2019SAB,nguyen2023accelerated}. In our paper, their primary role is to facilitate the derivation of Proposition~\ref{prop:perron}. Strictly speaking, these assumptions could be replaced as long as Proposition~\ref{prop:perron} remains valid, without affecting the main convergence results.

\begin{proposition}[\sc \small Exponential Decay]\label{prop:perron}
For \(A\) and \(B\) satisfying Assumptions~\ref{ass:graph} and~\ref{ass:matrix}, there exist entrywise positive Perron vectors \(\pi_A, \pi_B \in \mathbb{R}^n\) such that
\[
\pi_A^\top A = \pi_A^\top, \quad B \pi_B = \pi_B, \quad \norm{\pi_A}_1=\norm{\pi_B}_1= 1, \quad \pi_A^\top\pi_B >0.
\]
Besides, there exist constants $p_A, p_B>0$, $\lambda_A,\lambda_B\in [0,1)$ such that 
\begin{equation}\label{eq:exp decay}
    \norm{A^t-A_\infty}_2\le p_A\lambda_A^t,\quad\norm{B^t-B_\infty}_2\le p_B\lambda_B^t,\quad \forall k\ge 0.
\end{equation}

\end{proposition}
\begin{proof}
    The proof follows directly from the Perron-Frobenius theorem~\cite{Perron1907}.
\end{proof}
\revision{Here, $\lambda_A$ and $\lambda_B$ characterize the exponential mixing rates of $A$ and $B$, respectively. Under Assumption~2, Perron--Frobenius theory implies that the convergence rates are governed by the subdominant eigenvalue moduli. In particular, letting
\(
\rho_2(A)=\max_{i\ge 2}|\lambda_i(A)|,\,
\rho_2(B)=\max_{i\ge 2}|\lambda_i(B)|,
\)
one may take $\lambda_A\in(\rho_2(A),1)$ and $\lambda_B\in(\rho_2(B),1)$ in the corresponding mixing bounds. }
We next state assumptions regarding stochastic gradients and function smoothness.
\begin{assumption}[\sc {\small Gradient Oracle}]\label{ass:gradient oracle}
    There exists a constant $\sigma\ge 0$ such that
\begin{align*}
\EE_t[\nabla F_i(x_i^{(t)};\xi_i^{(t)})]&=\nabla f_i(x_i^{(t)}) \quad \mbox{and} \quad
\EE_t[\norm{\nabla F_i(x_i^{(t)};\xi_i^{(t)})-\nabla f_i(x_i^{(t)})}_F^2]\le \sigma^2
\end{align*}
for any $i\in [n]$ and $t\ge 0$. In addition, we assume that stochastic gradients at different nodes are independent for any $t\ge 0$ and $1\le i<j\le n$.
\end{assumption}

\begin{assumption}[\sc {\small smoothness}]\label{ass:smooth}
\revision{There exist positive constants $L, \Delta$ such that}
\begin{align*}
\revision{\norm{\nabla f_i(x)-\nabla f_i(y)} \le L\norm{x-y}, \quad \forall x,y\in \RR^d,\ \forall i\in [n],}
\end{align*}
\revision{and the initialization point $x^{(0)}$ satisfies}
\begin{align*}
\revision{f_i(x^{(0)})-\inf_{x}f_i(x) \le \Delta,\quad \forall i\in [n].}
\end{align*}
\end{assumption}

\textbf{Metrics.} Various effective metrics have been proposed in literautre to capture the impact of digraphs on decentralized stochastic optimization. Let \(\Pi_A := \mathrm{Diag}(\pi_A)\) and \(\Pi_B := \mathrm{Diag}(\pi_B)\) where $\pi_A$ and $\pi_B$ are defined in Proposition~\ref{prop:perron}. Given a column-stochastic mixing matrix \(B\), references~\cite{xin2019SAB,liang2025understanding} introduce \(\beta_B := \norm{\Pi_B^{-1/2}(B - B_\infty)\Pi_B^{1/2}}_2 \in [0, 1)\) as a measure of graph connectivity, and define \(\kappa_B := \max \pi_B / \min \pi_B\) to reflect the deviation of the equilibrium distribution \(\pi_B\) from the uniform distribution \(n^{-1}\one\). Similarly, reference~\cite{liang2025understanding} introduces \(\beta_A := \norm{\Pi_A^{1/2}(A - A_\infty)\Pi_A^{-1/2}}_2 \in [0, 1)\) and \(\kappa_A := \max \pi_A / \min \pi_A\) to characterize the influence of the row-stochastic mixing matrix \(A\) on decentralized algorithms. 

This paper introduces a new set of metrics to facilitate the convergence analysis of the \pushpull method. When Proposition~\ref{prop:perron} holds, we define:
\begin{align}
    s_A &:= \max\big\{\sum_{i=0}^\infty \norm{A^i - A_\infty}_2, \sum_{i=0}^\infty \norm{A^i - A_\infty}_2^2\big\},\label{eq:s_A definition}\\
    s_B &:= \max\big\{\sum_{i=0}^\infty \norm{B^i - B_\infty}_2,\sum_{i=0}^\infty \norm{B^i - B_\infty}_2^2\big\},\label{eq:s_B definition} \vspace{2mm}\\ 
    c &:= n\, \pi_A^\top \pi_B.\label{eq:c definition}
\end{align}
By the exponential decay property~\eqref{eq:exp decay}, both $s_A$ and $s_B$ are finite. In this paper, the influence of the mixing matrices is described solely through $s_A$, $s_B$, $c$, and $n$. Their connection to previously introduced metrics is given in the next proposition.  

\begin{proposition}[\sc\small Relation with existing metrics]\label{prop:s_A,s_B}
Suppose Assumption~\ref{ass:graph} and \ref{ass:matrix} hold.
The following inequalities on $s_A$, $s_B$ and $c$ hold:
    \begin{align*}
    1\le s_A\le \frac{M_A^2(1+\frac{1}{2}\ln(\kappa_A))}{1-\beta_A},&\quad 1\le s_B \le \frac{M_B^2(1+\frac{1}{2}\ln(\kappa_B))}{1-\beta_B},\\
    n\max\{\min \pi_A, \min \pi_B\}&\le  \,c \le n\min\{\max \pi_A, \max \pi_B\}. 
\end{align*}
where \( M_A := \max_{t \ge 0} \|A^t - A_\infty\|_2 \) and \( M_B := \max_{t \ge 0} \|B^t - B_\infty\|_2 \) are finite numbers which can be proved to be no larger than $\sqrt{n}$. 
\end{proposition}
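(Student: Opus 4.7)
The plan is to prove the three chains of inequalities separately, with the geometric-decay / split-the-sum argument for the upper bounds on $s_A$ and $s_B$ forming the technical core. The lower bounds $s_A\ge 1$ and $s_B\ge 1$ follow immediately from the $i=0$ summand: $\|I-A_\infty\|_2=\|I-\one\pi_A^\top\|_2\ge 1$, because any unit vector orthogonal to $\pi_A$ is fixed by $I-\one\pi_A^\top$; this inherits to both the unsquared and squared sums since $\|\cdot\|_2^2\ge 1$ as well, and a symmetric argument handles $B$. For $c=n\,\pi_A^\top\pi_B$, since $\pi_A$ and $\pi_B$ are probability vectors, $\pi_A^\top\pi_B\le(\max_i(\pi_A)_i)\sum_j(\pi_B)_j=\max\pi_A$, and symmetrically $\pi_A^\top\pi_B\le\max\pi_B$; the lower bound $\pi_A^\top\pi_B\ge\max\{\min\pi_A,\min\pi_B\}$ follows by the same estimate with $\min$ in place of $\max$, and multiplying by $n$ yields the claimed inequalities.

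For the upper bounds on $s_A$ and $s_B$, I would first verify the algebraic identity $(A-A_\infty)^i=A^i-A_\infty$ for all $i\ge 1$. This reduces to the absorption properties $A\cdot A_\infty=A_\infty\cdot A=A_\infty^2=A_\infty$, which follow from $A\one=\one$, $\pi_A^\top A=\pi_A^\top$, and $\pi_A^\top\one=1$, plus a one-line induction. Then the similarity $\tilde C:=\Pi_A^{1/2}(A-A_\infty)\Pi_A^{-1/2}$, satisfying $\|\tilde C\|_2=\beta_A$, gives
\[\|A^i-A_\infty\|_2=\|\Pi_A^{-1/2}\tilde C^i\Pi_A^{1/2}\|_2\le\sqrt{\kappa_A}\,\beta_A^i.\]
Combining this with the uniform bound $\|A^i-A_\infty\|_2\le M_A$, I would split each of the two sums at the balance point $i^\star:=\lceil\tfrac{1}{2}\log_{1/\beta_A}\kappa_A\rceil$ where the two pointwise estimates cross: up to $i^\star$ use $M_A$ (resp.\ $M_A^2$) per term; after $i^\star$ sum the geometric tail of $\sqrt{\kappa_A}\beta_A^i$ (resp.\ $\kappa_A\beta_A^{2i}$), which contributes at most $M_A/(1-\beta_A)$ (resp.\ $M_A^2/(1-\beta_A^2)$). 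Using the standard inequality $-\log\beta_A\ge 1-\beta_A$ to convert the logarithmic cutoff into a $1/(1-\beta_A)$ factor, and the already-established $M_A\ge 1$ to dominate the unsquared sum by its squared counterpart, both expressions collapse into the single bound $M_A^2(1+\tfrac{1}{2}\ln\kappa_A)/(1-\beta_A)$. The proof for $s_B$ is verbatim with $A,\pi_A,\Pi_A,\beta_A,\kappa_A,M_A$ replaced by their $B$-counterparts.

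The main obstacle is calibrating the split point so that the arithmetic prefix and the geometric tail fit uniformly under the single expression $M_A^2(1+\tfrac{1}{2}\ln\kappa_A)/(1-\beta_A)$, and so that the same bound works simultaneously for the sum and the sum of squares in the definition of $s_A$. Getting the precise $\tfrac{1}{2}\ln\kappa_A$ factor hinges on choosing $i^\star$ exactly at the crossover of the two pointwise bounds and invoking $-\log\beta_A\ge 1-\beta_A$ to turn the logarithmic cutoff into the $1/(1-\beta_A)$ denominator; everything else is routine.
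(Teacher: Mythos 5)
Your proposal is correct and follows essentially the same route as the paper's proof in Appendix~\ref{app:sec:rolling sum}: establish $\norm{A^i-A_\infty}_2\le\sqrt{\kappa_A}\,\beta_A^i$ via the $\Pi_A^{1/2}$-similarity, combine it with the uniform bound $M_A$, split the sum at the crossover index, bound the geometric tail, and invoke $-\ln\beta_A\ge 1-\beta_A$ (the paper places the split at the crossover with $M_A$ rather than with $1$, but the calibration argument is the same). The lower bounds on $s_A,s_B$ and the bounds on $c$, which the paper leaves implicit, are handled correctly in your write-up.
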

\begin{proof}
    See Appendix~\ref{app:sec:rolling sum}.
\end{proof} 

\section{Intrinsic Differences between Push-Only, Pull-Only, and Push-Pull}\label{sec:Preliminary}
This section provides a concise overview of the linear speedup analysis for the \pushonly\ and \pullonly\ algorithms, clarifying their key differences from \pushpull. We also highlight the limitations of existing analyses and explain why they cannot establish the linear speedup property of \pushpull.

\vspace{1mm}
\subsection{Push-Only and Pull-Only align with centralized SGD}
Many decentralized algorithms---such as decentralized SGD~\cite{koloskova2020unified}, gradient tracking~\cite{xu2015augmented,di2016next,qu2017harnessing}, 
Push-Diging~\cite{Nedic2017pushdiging}, Gradient-Pull~\cite{mai2016distributed}, and FROST~\cite{FROST-Xinran}---are designed to emulate centralized SGD in a decentralized setting. These algorithms can be reformulated as
\begin{align}
    \hat{x}^{(t+1)} = \underbrace{\hat{x}^{(t)} - \alpha \bar{g}^{(t)}}_{\text{centralized SGD}} + \epsilon^{(t)}, \label{eq:coordinate}
\end{align}
where $\hat{x}^{(t)} = \sum_{i=1}^n v_i^{(t)} (x_i^{(t)})^\top \in \mathbb{R}^{1 \times d}$ is a convex combination of the local decision variables $\{x_i^{(t)}\}_{i=1}^n$, with $v_i^{(t)}$ denoting the combination weights. The term $\bar{g}^{(t)} = \tfrac{1}{n}\one^\top \vg^{(t)}$ is the globally averaged stochastic gradient, and $\epsilon^{(t)}$ captures the residual error due to decentralization. When $\epsilon^{(t)}=0$, the recursion in~\eqref{eq:coordinate} reduces to centralized SGD, whose linear speedup property is well established (see Table~\ref{table::comparision}).

For \pushonly\ and \pullonly\ algorithms, the residual  $\epsilon^{(t)}$ is either exactly zero or decays exponentially to zero. This ensures that the trajectory of the weighted average of local iterates closely tracks that of centralized SGD. Once the residual is sufficiently small, the linear speedup property follows directly from the standard analysis of centralized SGD. We illustrate this alignment with two examples.

\begin{example}[\sc {\small Push-DiGing}~\cite{Nedic2017pushdiging}]\label{exp1}
Push-DiGing is a \pushonly\ method, whose iterates are given by
\begin{subequations}
    \begin{align}
    \vx^{(t+1)} &= A_t\vx^{(t)} - \alpha V_{t+1}^{-1}\vy^{(t)}, \label{eq:pushdg-1}\\
    \vy^{(t+1)} &= B\vy^{(t)} + \vg^{(t+1)} - \vg^{(t)},
    \end{align}
\end{subequations}
where $B$ is column-stochastic, $v_t = B^t\one$, $V_t=\mathrm{Diag}(v_t)$, $A_t=V_{t+1}^{-1}BV_t$, and $\vg^{(t)}$ stacks local stochastic gradients. Left-multiplying~\eqref{eq:pushdg-1} by $n^{-1} v_{t+1}^\top$ yields
\[
n^{-1}v_{t+1}^\top\vx^{(t+1)} = n^{-1}v_t^\top\vx^{(t)} - \alpha \bar{g}^{(t)}.
\]
Defining $\hat{x}^{(t)} = n^{-1} v_t^\top \vx^{(t)}$, the recursion matches~\eqref{eq:coordinate} with $\epsilon^{(t)}=0$.
\end{example}

\vspace{1mm}
\begin{example}[\sc {\small FROST}~\cite{li2019row, FROST-Xinran}] \label{exp2}
FROST/PullDiag-GT is a \pullonly\ method based on a row-stochastic matrix $A$. Its updates are
\begin{subequations}
    \begin{align}
    \vx^{(t+1)} &= A\vx^{(t)} - \alpha \vy^{(t)}, \label{eq:pulldiag-gt-1}\\
    \vy^{(t+1)} &= A\vy^{(t)} + D_{t+1}^{-1}\vg^{(t+1)} - D_t^{-1}\vg^{(t)},
    \end{align}
\end{subequations}
where $D_t=\mathrm{Diag}(A^t)$, $\vg^{(t)}$ is the stacked stochastic gradient, and $\vy^{(0)}=\vg^{(0)}$. Left-multiplying~\eqref{eq:pulldiag-gt-1} by $n^{-1}\pi_A^\top$ gives
\[
n^{-1}\pi_A^\top\vx^{(t+1)} = n^{-1}\pi_A^\top\vx^{(t)} - \alpha \bar{g}^{(t)} + \epsilon^{(t)},
\]
where $\epsilon^{(t)} = \alpha n^{-1}(\one^\top - \pi_A^\top D_t^{-1})\vg^{(t)}$ decays exponentially fast~\cite[Lemma 12]{liang2025achieving}. Setting $\hat{x}^{(t)} = n^{-1}\pi_A^\top\vx^{(t)}$ recovers the form~\eqref{eq:coordinate}.
\end{example}

\vspace{1mm}
\subsection{Push--Pull does not explicitly align with centralized SGD}
Unlike \pushonly\ and \pullonly, the \pushpull\ method cannot be perfectly aligned with centralized SGD. Although it can be written in the form of \eqref{eq:coordinate}, it suffers from a non-vanishing error term. Recall the \pushpull\ recursion~\eqref{eq:wenads}. Left-multiplying~\eqref{eq:pushpull-2} by $\one^\top$ yields
\begin{align}\label{2hasdas}
\one^\top \vy^{(t+1)} = \one^\top \vy^{(t)} + \one^\top \vg^{(t+1)} - \one^\top \vg^{(t)} = \one^\top \vg^{(t+1)},
\end{align}
where the last equality follows from the initialization $\vy^{(0)}=\vg^{(0)}$. Left-multiplying step~\eqref{eq:pushpull-1} by $\pi_A^\top$, we obtain
\begin{align*}
    \pi_A^\top \vx^{(t+1)} &= \pi_A^\top A\vx^{(t)} - \alpha \pi_A^\top B_\infty \vy^{(t)} - \alpha \pi_A^\top(I-B_\infty)\vy^{(t)} \\
    &= \pi_A^\top \vx^{(t)} - \alpha \pi_A^\top \pi_B \one^\top \vy^{(t)} - \alpha \pi_A^\top(I-B_\infty)\vy^{(t)}.
\end{align*}
Letting $\hat{x}^{(t)} = \pi_A^\top\vx^{(t)}$ and using~\eqref{2hasdas}, we obtain
\begin{align}\label{eq:push-pull-centralized}
\hat{x}^{(t+1)} = \hat{x}^{(t)} - c\alpha \bar{g}^{(t)} + \epsilon^{(t)}, 
\quad \text{where} \quad \epsilon^{(t)} = -\alpha \pi_A^\top(I-B_\infty)\vy^{(t)},
\end{align}
with $c=n\pi_A^\top\pi_B$ (see Definition \eqref{eq:c definition}) and $\bar{g}^{(t)} = n^{-1}\one^\top\vg^{(t)}$. Unlike Push-DiGing (Example~\ref{exp1}), where $\epsilon^{(t)}=0$, or FROST (Example~\ref{exp2}), where $\epsilon^{(t)}$ decays exponentially, the error term $\pi_A^\top(I-B_\infty)\vy^{(t)}$ in \pushpull\ does not vanish but instead stabilizes at a non-negligible constant. As shown in Figure~\ref{fig::synthetic_error_example}, this persistent error remains regardless of network size or topology, making the linear speedup analysis of \pushpull\ substantially more challenging.

\begin{figure}[t]
    \centering
    
    \vspace{0.02\linewidth}
    
    \begin{minipage}{0.3\linewidth}
        \centering
        \includegraphics[width=\linewidth]{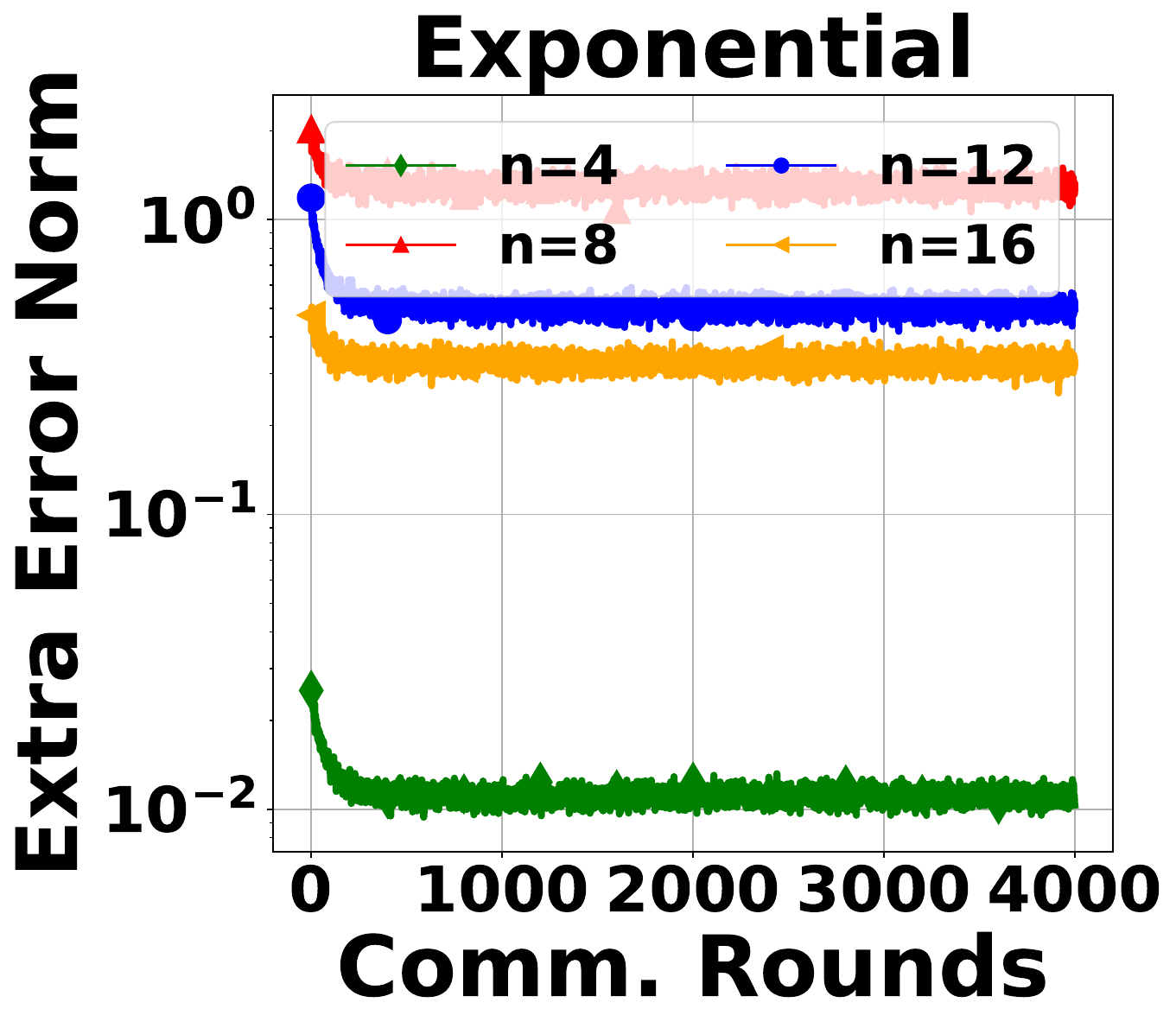}
    \end{minipage}
    \hspace{0.02\linewidth}
    \begin{minipage}{0.3\linewidth}
        \centering
        \includegraphics[width=\linewidth]{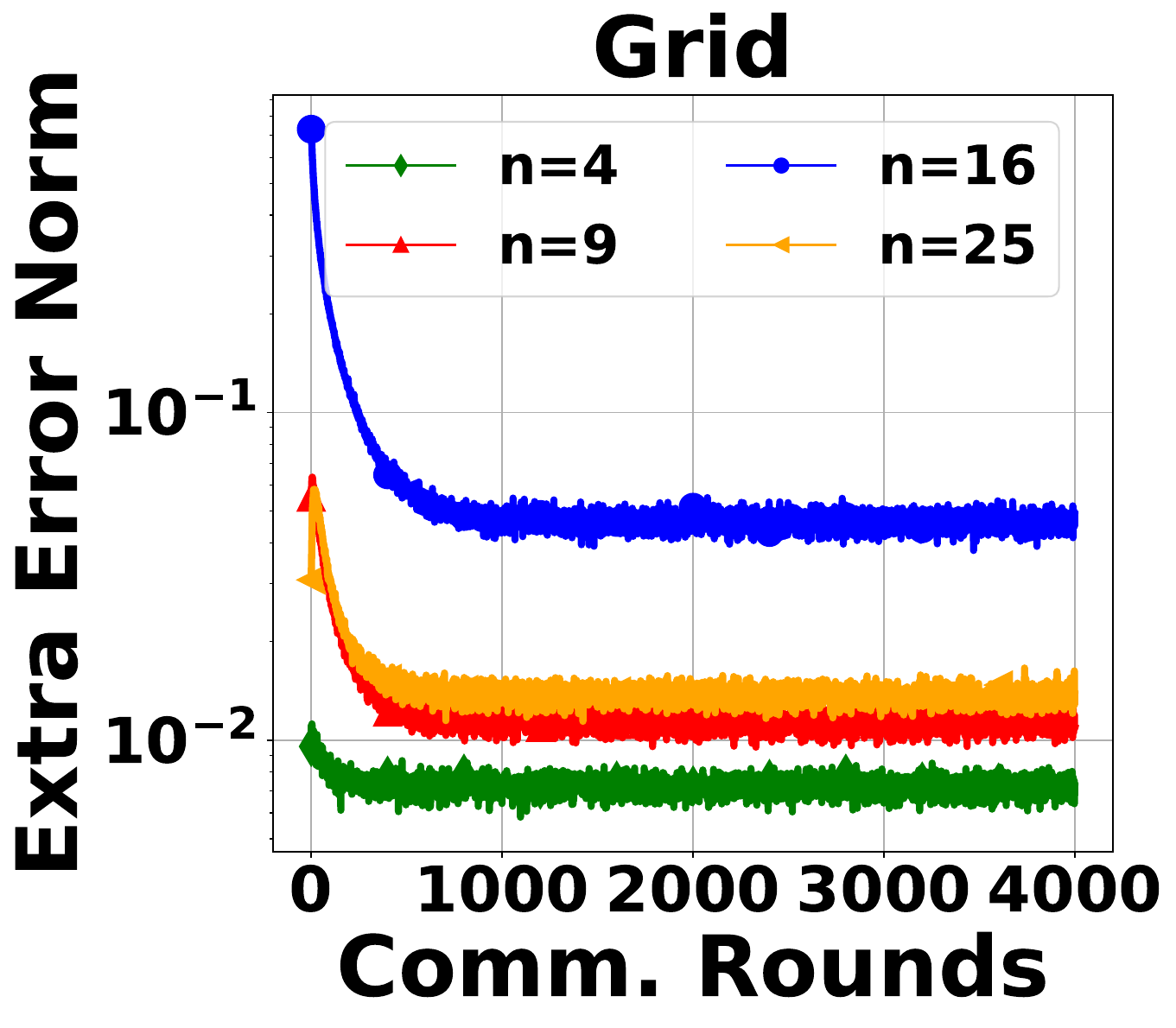}
    \end{minipage}
    \hspace{0.02\linewidth}
    \begin{minipage}{0.3\linewidth}
        \centering
        \includegraphics[width=\linewidth]{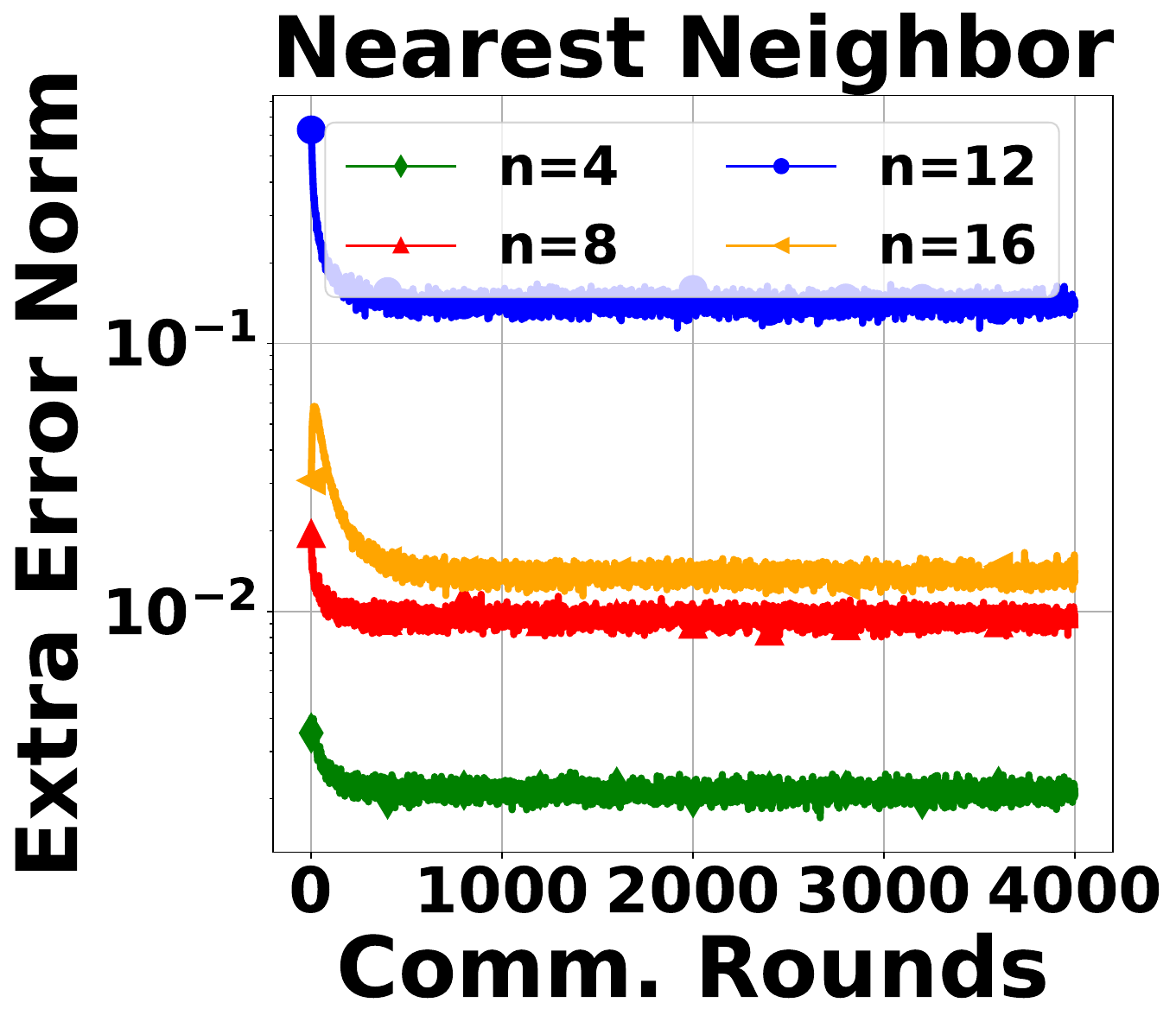}
    \end{minipage}
    \vspace{-2mm}
    \caption{
    With a fixed stepsize $\alpha$, the norm of \revision{extra error} term $\epsilon^{(t)}=-\alpha\,\pi_A^\top(I-B_\infty)\vy^{(t)}$ in \pushpull\ does not vanish but stabilizes at a non-negligible level. \revision{The experiment is nonconvex logistic regression on a synthetic dataset. The three panels correspond, from left to right, to directed exponential, grid, and nearest-neighbor graphs. Curves show different node numbers: $n\in\{4,8,12,16\}$ for the exponential and nearest-neighbor graphs, and $n\in\{4,9,16,25\}$ for the grid graph.}
    }
    \label{fig::synthetic_error_example}
    \vspace{-4mm}
\end{figure}

\section{A Novel Multiple-Step Descent Analysis Framework}\label{sec:linear speedup}
Establishing linear speedup for \pushpull\ is challenging due to the presence of a non-vanishing error in~\eqref{eq:push-pull-centralized}. We begin by revisiting the classical single-step descent framework, highlighting why it suffices for \pushonly\ and \pullonly\ methods but fails for \pushpull, thereby motivating the development of a new multi-step descent analysis.

\subsection{Limitations of single-step descent analysis}\label{sec-limitation}
Following conventional single-step descent analysis, we establish the descent inequality based on update~\eqref{eq:push-pull-centralized}:
\begin{align}\label{ieq:1-step descent}
    \norm{\nabla f(\hat{x}^{(t)})}^2 
    \hspace{-1mm}\lesssim\hspace{-1mm} \frac{f(\hat{x}^{(t)}) \hspace{-1mm}-\hspace{-1mm}\EE_t[f(\hat{x}^{(t+1)})]}{c\alpha}\hspace{-0.8mm}+\hspace{-0.8mm}\frac{c\alpha L}{n}\sigma^2 \hspace{-0.8mm}+\hspace{-0.8mm} \alpha L \norm{\pi_A^\top(I\hspace{-1mm}-\hspace{-1mm}B_\infty)\vy^{(t)}}^2 \hspace{-1mm}+\hspace{-1mm} \Delta_{\text{consent}}^{(t)}
\end{align}
where $\Delta_{\text{consent}}^{(t)}$ denotes consensus errors among local variables, typically of order $\mathcal{O}(\alpha^2)$ and hence negligible when $\alpha$ is sufficiently small. We 
use $\lesssim$ to denote inequalities that hold up to absolute numerical constants. Taking full expectation and summing over $t=0,\dots,T-1$ yields
\begin{align}\label{ieq:1-step corrolary}
     \frac{1}{T}\sum_{t=0}^{T-1}\EE[\norm{\nabla f(\hat{x}^{(t)})}^2] &\lesssim \frac{f(\hat{x}^{(0)})\hspace{-0.5mm}-\hspace{-0.5mm}f(\hat{x}^{(T)})}{c\alpha T} \hspace{-0.5mm}+\hspace{-0.5mm} \frac{c\alpha L}{n}\sigma^2 \hspace{-0.5mm}\hspace{-0.5mm} \\
    &\quad +\frac{\alpha L}{T}\sum_{t=0}^{T-1}\EE[\norm{\pi_A^\top(I\hspace{-0.5mm}-\hspace{-0.5mm}B_\infty)\vy^{(t)}}^2] +\frac{1}{T}\sum_{t=1}^{T-1}\mathbb{E}[\Delta^{(t)}_{\rm consent}]. \label{eq-classical-approach}
\end{align}
Since $\tfrac{1}{T}\sum_{t=1}^{T-1}\EE[\Delta^{(t)}_{\rm consent}]$ is negligible for sufficiently small $\alpha$, the convergence rate is dominated by the two main terms in~\eqref{ieq:1-step corrolary}, provided the first term in~\eqref{eq-classical-approach} is either zero (Example~\ref{exp1}) or decays exponentially (Example~\ref{exp2}). With $\alpha = \sqrt{n/(c^2\sigma^2 T)}$, we achieve
$\tfrac{1}{T}\sum_{t=0}^{T-1}\EE[\|\nabla f(\hat{x}^{(t)})\|^2] = \mathcal{O}(\sigma/\sqrt{nT})$, 
implying $\mathcal{O}(1/(n\epsilon^2))$ iteration complexity and hence establish the linear speedup property.  

However, unlike \pushonly\ and \pullonly, the term $\EE[\|\pi_A^\top(I-B_\infty)\vy^{(t)}\|^2]$ in \pushpull\ persists (see Figure~\ref{fig::synthetic_error_example}), \revision{thus establishing linear speedup further requires the following inequality:}
\begin{equation}\label{ieq:strong ieq}
    \frac{1}{T}\sum_{t=0}^{T-1}\mathbb{E}[\|\pi_A^\top(I-B_\infty)\mathbf{y}^{(t)}\|^2]\lesssim\frac{c}{n}\sigma^2 + \text{negligible terms}.
\end{equation}
If this inequality holds, the first term in \eqref{eq-classical-approach} can be absorbed into the second term of \eqref{ieq:1-step corrolary}. Choosing $\alpha = \sqrt{n/(c^2\sigma^2 T)}$ will yield the same linear speedup rate. \revision{Unfortunately, this inequality is unlikely to be true because the term
\(
\pi_A^\top(I-B_\infty)\mathbf{y}^{(t)}
\)
does not enjoy the same averaging effect as the network average, and every term may contribute a noise term of constant order. To see this, we can left-multiply~\eqref{eq:pushpull-2} by $(I-B_\infty)$, which gives  
\begin{align}
\label{2dhbae00}
(I-B_\infty)\vy^{(t+1)} = (B-B_\infty)\vy^{(t)} + (I-B_\infty)\big(\vg^{(t+1)}-\vg^{(t)}\big),
\end{align}
where $\vg^{(t)}$ stacks the $n$ stochastic gradients $\nabla F_i(x_i^{(t)};\xi_i^{(t)})$, each with variance $\sigma^2$.  Consequently, one should not expect
\(
\EE\!\left[\|\pi_A^\top (I-B_\infty)\vy^{(t)}\|^2\right]
= O(\sigma^2/n);
\)
indeed, this quantity can be of order $n\sigma^2$ depending on $\pi_A$ and $B_\infty$. This explains why the desired bound cannot be obtained by the classical approach and motivates the need for a different analysis.}

\subsection{A novel multi-step descent analysis} 
In contrast to analyzing each iteration in isolation, we now introduce a novel $m$-step recursion that aggregates information across a block of $m$ iterations. This blockwise perspective reveals cancellation effects in the gradient-tracking recursion and enables the gradient noise to be controlled at the desired level. Specifically, for a positive integer $m$ and any index $k$ that is a multiple of $m$, the recursion~\eqref{eq:push-pull-centralized} can be written as
\begin{equation}\label{eq:m-step recursion}
\hat{x}^{(k+m)}=\hat{x}^{(k)} - c\alpha \sum_{i=0}^{m-1}\bar{g}^{(k+i)} - \alpha \pi_A^\top \sum_{i=0}^{m-1}\Delta_y^{(k+i)},
\end{equation}
where $\Delta_y^{(t)} = (I - B_\infty)\mathbf{y}^{(t)}$. Analogous to the derivation of the single-step descent inequality~\eqref{ieq:1-step descent}, we can establish the following $m$-step descent inequality:
\begin{align}\label{ieq:m-step descent}
    \norm{\nabla f(\hat{x}^{(k)})}^2 
    \hspace{-0.8mm}\lesssim\hspace{-0.8mm} \frac{f(\hat{x}^{(k)}) \hspace{-0.8mm}-\hspace{-0.8mm} \EE_k[f(\hat{x}^{(k+m)})]}{c\alpha m} \hspace{-0.8mm}+\hspace{-0.8mm} \frac{c\alpha L}{n}\sigma^2 \hspace{-0.8mm}+\hspace{-0.8mm} \frac{\alpha L}{m} \norm{\pi_A^\top \hspace{-0.8mm} \sum_{i=0}^{m-1}\hspace{-1mm}\Delta_y^{(k+i)}\hspace{-0.2mm} }^2 \hspace{-0.8mm}+\hspace{-0.8mm} \Delta_{\text{consent}}^{(k,m)}
\end{align}
where $\Delta_{\text{consent}}^{(k,m)}$ denotes consensus errors within the $m$-step block, which are typically $\mathcal{O}(\alpha^2)$ and negligible for small $\alpha$. A formal statement of~\eqref{ieq:m-step descent} is provided in Lemma~\ref{lem:m-descent}. Let $T=mK$ denote the total number of iterations and define $[m^*] := \{0,m,2m,\ldots,(K-1)m\}$. Taking the full expectation of~\eqref{ieq:m-step descent} and summing over all $k \in [m^*]$ yields
\begin{align}\label{ieq:m-step corrolary}
    \quad \min_{0\le t\le T-1} \EE[\norm{\nabla f(\hat{x}^{(t)})}^2] &\le \frac{1}{K}\sum_{k\in[m^*]}\EE[\norm{\nabla f(\hat{x}^{(k)})}^2] \lesssim \frac{f(\hat{x}^{(0)})-f(\hat{x}^{(T)})}{c\alpha T} + \frac{c\alpha L}{n}\sigma^2\nonumber\\
    &\hspace{-0.8cm} +  \frac{\alpha L}{T}\sum_{k\in[m^*]}\EE[\norm{\pi_A^\top \sum_{i=0}^{m-1}\Delta_y^{(k+i)}}^2] + \frac{\alpha L}{T}\sum_{k\in[m^*]}\EE[\Delta_{\text{consent}}^{(k,m)}].
\end{align}
Similar to the single-step descent inequality \eqref{eq-classical-approach}, the central component in the $m$-step analysis is to control $\sum_{i=0}^{m-1}\Delta_y^{(k+i)}$. Summing the single-step recursion~\eqref{2dhbae00} from $i=0$ to $m-1$ yields
\[\sum_{i=0}^{m-1}\Delta_y^{(k+i)} =\Delta_y^{(k)}  +(B-B_\infty)\sum_{i=0}^{m-2}\Delta_y^{(k+i)} +(I-B_\infty) (\vg^{(k+m-1)}-\vg^{(k)}).\]
\revision{Because of the $\vg^{(k+1)}-\vg^{(k)}$ structure in~\eqref{2dhbae00}, this direct summation exhibits a telescoping cancellation: the intermediate stochastic gradients $\vg^{(k+i)},\, 1\le i\le m-2$, cancel out. Repeatedly applying this equality yields} 
\begin{align}\label{eq:sum of deltay}
    \sum_{i=0}^{m-1}\Delta_y^{(k+i)} 
    = \Big(\sum_{j=0}^{m-1}(B^j-B_\infty)\Big)\Delta_y^{(k)} 
    + \sum_{j=1}^{m-1}(B^{m-j-1}-B_\infty)(\vg^{(k+j)}-\vg^{(k)}),
\end{align}
\revision{where all intermediate gradients are multiplied by mixing residuals such as \(B^{m-j-1}-B_\infty\), whose norms decay exponentially~(Proposition~\ref{prop:s_A,s_B}).} 
In particular, each boundary difference contributes gradient noise with variance $n\sigma^2$, but its effect is scaled by $\lambda_B^{m-j-1}$, yielding $\sum_{j=1}^{m-1}\lambda_B^{m-j-1}n\sigma^2 \le (n\sigma^2)/(1-\lambda_B)$ which is constant in $m$. Therefore,
\begin{align}
    &\ \frac{1}{T}\sum_{k\in[m^*]}\EE[\norm{\pi_A^\top \Big(\sum_{i=0}^{m-1}\Delta_y^{(k+i)}\Big)}^2]\label{ieq:strong ieq-new}\\
    =&\ \frac{1}{T}\sum_{k\in[m^*]}\mathrm{Var}_k[\pi_A^\top \Big(\sum_{i=0}^{m-1}\Delta_y^{(k+i)}\Big)] + \frac{1}{T}\sum_{k\in[m^*]}\EE[\norm{\EE_k[\pi_A^\top \Big(\sum_{i=0}^{m-1}\Delta_y^{(k+i)}\Big)]}^2]   \nonumber \\
    \lesssim &\ \frac{n\sigma^2}{m} + \text{negligible terms}  \lesssim \frac{\sigma^2}{n} + \text{negligible terms}.  \nonumber
\end{align}
The second inequality holds when $m$ is chosen sufficiently large\footnote{In the rigorous proof, the choice of $m$ is more involved than the simplified condition stated here and may also depend on the number of iterations $T$; see Section~\ref{sec:linear speedup 2} for details.}, e.g., $m \ge \text{Const}\cdot n^2$. Recalling that $T = mK$, substituting~\eqref{ieq:strong ieq-new} into~\eqref{ieq:m-step descent} and choosing $\alpha = \sqrt{n/(c^2\sigma^2 T)}$ yield the desired linear speedup rate. 

\revision{The fundamental difference between prior single-step analyses and 
our multi-step framework lies in \emph{what must be controlled to obtain 
linear speedup}. Single-step analyses require the decentralization 
residual \(\epsilon^{(t)}\) to vanish, or to be exponentially small, at 
\emph{every iteration}, so that each update preserves the 
\(\mathcal{O}(\sigma^2/n)\) noise scaling. This pointwise criterion is 
unattainable for Push--Pull, whose residual does not vanish but 
stabilizes at a non-negligible constant 
(Figure~\ref{fig::synthetic_error_example}). Our framework instead 
imposes only a \emph{block-level} condition: that the accumulated effect 
of the residual over an \(m\)-step window stay bounded. This holds 
because the telescoping structure of the gradient-tracking recursion 
damps the block-level residual by exponentially decaying mixing factors 
rather than summing it at full strength, keeping its variance bounded 
independently of \(m\). In short, the multi-step reformulation changes the very object of control, replacing a single-step vanishing residual with a controlled block-level residual, which is what enables the first linear-speedup proof for \pushpull. }

\begin{remark}[Relation to large-batch and variance-reduction methods]
The decay of the normalized block-level stochastic contribution with the 
window length $m$ is reminiscent of large-batch averaging, but the 
resemblance is only analytical. The algorithm remains \pushpull, with 
one stochastic gradient per node per iteration and no extra samples, 
snapshots, control variates, or enlarged batches; $m$ is merely an 
analytical window. Unlike variance-reduction or large-batch methods, 
which lower the variance of the gradient estimator, our analysis 
controls the decentralized tracking residual induced by the row/column 
mixing mismatch, exploiting the telescoping temporal-difference 
structure of gradient tracking. The framework therefore does not reduce 
the stochastic gradient noise below the centralized mini-batch SGD 
level, but prevents the \pushpull residual from dominating it.
\end{remark}

\section{Achieving Linear Speedup for Push-Pull}
\label{sec:linear speedup 2}
In this section, we formalize the aforementioned arguments through supporting lemmas and rigorously establish the linear speedup property for \pushpull. 

\subsection{Supporting lemmas}
\begin{lemma}[\sc \small m-step Descent Lemma]\label{lem:m-descent}
Suppose Assumption~\ref{ass:gradient oracle},~\ref{ass:smooth} and Proposition~\ref{prop:perron} hold. For any integers $k\ge 0$, we define $\hat{x}^{(k)}=\pi_A^\top \vx^{(k)}$. When $\alpha \le \frac{1}{6cmL}$, the following inequality holds for integers $k\ge 0, m\ge 6c^{-2}s_B^2$: 
    \begin{align}\label{eq:m-descent}
        &\quad\norm{\nabla f(\hat{x}^{(k)})}^2+\frac{1}{m}\sum_{i=0}^{m-1}\EE_k[\norm{\overline{\nabla f}^{(k+i)}}^2]\\
    &\le \frac{4(f(\hat{x}^{(k)})-\EE_k[f(\hat{x}^{(k+m)})])}{\alpha c m}+\frac{8\alpha c L}{n}\sigma^2+\frac{8\alpha Lns_B^2}{cm}\sigma^2\label{eq:m-descent-2}\\
    &\quad+\frac{2 L^2}{m}\Delta_1^{(k)}+\frac{2 L^2}{m}\Delta_2^{(k)}+\frac{12 s_B^2}{c^2m^2}\norm{\Delta_y^{(k)}}_F^2,\label{eq:m-descent-3}
    \end{align}
    where $\overline{\nabla f}^{(k+i)} := n^{-1}\mathbf{1}^\top \nabla f(\vx^{(k+i)})$ denotes the average of the local deterministic gradients evaluated at $\vx^{(k+i)}$, and  
    \begin{align*}
        \Delta_1^{(k)}:=\sum_{j=0}^{m-1}\EE_k[\|\vx^{(k+j)} - \hat{\vx}^{(k)}\|_F^2], 
        \quad
        \Delta_2^{(k)}:= \sum_{j=1}^{m}\EE_k[\|\vx^{(k+j)} - \vx^{(k)}\|_F^2].
    \end{align*}
\end{lemma}
\begin{proof}
    See Appendix~\ref{app:subsec:lem 4.3}.
\end{proof}

In \eqref{eq:m-descent-2}, when $m \ge c^{-2} n^{2} s_{B}^{2}$, \revision{the dominant noise term is $\tfrac{8 \alpha c L}{n} \sigma^{2}$. This term is referred to as the linear-speedup term because, under the stepsize choice \(\alpha=\mathcal{O}(\sqrt{n/T})\), it contributes the rate \(\mathcal{O}(\sigma/\sqrt{nT})\), provided that the remaining error terms are of lower order.}  The remaining error terms in~\eqref{eq:m-descent-3} are estimated in the next lemma.

\begin{lemma}\label{lem:consensus error}
    Suppose Assumption~\ref{ass:gradient oracle},~\ref{ass:smooth} and Proposition~\ref{prop:perron} hold. With $\Delta_1^{(k)}$, $\Delta_2^{(k)}$ defined above, when $\alpha\le \min\{\frac{1}{10cm\sqrt{s_Bs_{B^m}^2}L},\frac{1}{6L}\}$ and \revision{$m\ge \tilde{C}$}, we have
    \begin{align*}
     &\quad\frac{2 L^2}{m}\sum_{k\in [m^*]}\EE[\Delta_1^{(k)}+\Delta_2^{(k)}]+\frac{12 s_B^2}{c^2m^2}\sum_{k\in [m^*]}\EE[\norm{\Delta_y^{(k)}}_F^2] \\
     &\le \frac{1}{m}\sum_{t=0}^{T-1}\EE_k[\norm{\overline{\nabla f}^{(t)}}^2]+(\frac{73c\alpha L}{n}+\frac{384s_B^3s_{B^m}^2}{m^2c^2})\sigma^2 + C_{\Delta,1}\frac{ L\Delta}{4m^2}.
\end{align*}
Here, \revision{$\tilde{C}$} and $C_{\Delta,1}$ denote matrix-related constants (see~\eqref{eq::tilde_c_m_simplified} and \eqref{eq:coefficients}).  
The quantity $s_{B^m}$ represents the matrix measure of $B^m$, as defined in~\eqref{eq:s_B definition} and ~\eqref{eq:s_B^m_def_in_appendix}.  
In general, we have $s_{B^m} \leq s_{B}$, and moreover, $s_{B^m}$ converges to $\lVert I - B_\infty \rVert_2^2$ as $m \to \infty$.
\end{lemma}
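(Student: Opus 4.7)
The plan is to bound each of $\EE[\Delta_1^{(k)}]$, $\EE[\Delta_2^{(k)}]$ and $\EE[\norm{\Delta_y^{(k)}}_F^2]$ separately, and then sum only over the sparse epoch indices $k\in[m^*]=\{0,m,\ldots,(K-1)m\}$ so that the inner $m$-step sums telescope into a single running total $\sum_{t=0}^{mK-1}$. Throughout I would exploit the gradient-tracking recursion together with the summability constants $s_A$, $s_B$, $s_{B^m}$ so as to avoid the naive $\cO(m^2)$ blow-up that a direct Cauchy bound would incur, mirroring the mechanism that led from \eqref{estimate-1} to \eqref{estimate-2}.

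First I would control $\EE[\norm{\Delta_y^{(k)}}_F^2]$ using the algebraic facts $(I-B_\infty)B=B-B_\infty$ and $BB_\infty=B_\infty$, which yield the one-epoch identity $\Delta_y^{(k+m)}=(B^m-B_\infty)\Delta_y^{(k)}+\sum_{j=1}^{m}(B^{m-j}-B_\infty)(\vg^{(k+j)}-\vg^{(k+j-1)})$. Rewriting the sum as a linear combination $\sum_i N_i\vg^{(k+i)}$ via Abel summation and applying Lemma~\ref{lem:noise separation} splits it into a pure-noise piece of size $n\sigma^2\cdot\cO(s_B^2)$ and a deterministic piece bounded, via smoothness, by $L^2\sum_j\norm{\vx^{(k+j)}-\vx^{(k+j-1)}}_F^2$, which is already embedded in $\Delta_2^{(k)}$. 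Summing this recursion over $[m^*]$ and using $\norm{B^m-B_\infty}_2\le p_B\lambda_B^m$ together with $s_{B^m}$ gives $\sum_{k\in[m^*]}\EE[\norm{\Delta_y^{(k)}}_F^2]\lesssim Ks_{B^m}^2\sigma^2+\alpha^2L^2\sum_{k\in[m^*]}\EE[\Delta_2^{(k)}]$, which is exactly the shape needed on the right-hand side.

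Next I would bound $\EE[\Delta_1^{(k)}]$ and $\EE[\Delta_2^{(k)}]$ starting from $\vx^{(k+i)}=A^i\vx^{(k)}-\alpha\sum_{j=0}^{i-1}A^{i-1-j}\vy^{(k+j)}$ and $\hat{\vx}^{(k)}=A_\infty\vx^{(k)}$. This splits the differences into a consensus-decay part $(A^i-A_\infty)\vx^{(k)}$, whose norms sum via $s_A$ and telescope to a $C_{\Delta,1}\frac{c\alpha L\Delta}{4m}$-type residue once the initial consensus error is controlled using $f_i(x)-\inf f_i\le\Delta$, and a gradient-driven part. For the gradient-driven part I would decompose $\vy^{(k+j)}=B_\infty\vy^{(k+j)}+\Delta_y^{(k+j)}$, rewrite $B_\infty\vy^{(k+j)}=\pi_B\one^\top\vg^{(k+j)}$ using $\one^\top\vy^{(k+j)}=\one^\top\vg^{(k+j)}$ to produce a stochastic-gradient term amenable to Lemma~\ref{lem:noise separation}, and recycle the $\Delta_y$ bound from the previous step for the remainder. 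The resulting noise contribution matches the $\frac{73c\alpha L}{n}\sigma^2$ piece of the target, while the deterministic part reduces to $\sum_i\EE[\norm{\overline{\nabla f}^{(k+i)}}^2]$ after invoking smoothness and $\norm{\nabla f(\hat{x}^{(k)})}^2\le 2L\Delta$.

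The hard part—and the reason for the stepsize condition $\alpha\le\frac{1}{10cm\sqrt{s_Bs_{B^m}^2}L}$ together with $m\ge C_m$—is closing the feedback loop between the three estimates: the bound on $\Delta_1,\Delta_2$ contains $\sum\norm{\Delta_y}_F^2$, while the bound on $\sum\norm{\Delta_y}_F^2$ contains $\sum\EE[\Delta_2^{(k)}]$ through the state drift. The chosen stepsize is precisely what makes the self-referential coefficients fall below $\tfrac12$ on each substitution, so the coupled system can be solved algebraically. Once this absorption is performed, the characteristic $m^{-2}$ factor in $\frac{384s_B^3s_{B^m}^2}{m^2c^2}\sigma^2$ appears naturally: one factor of $1/m$ from the ratio of the $K$ epoch sums to the $mK$ iteration sum on the right, and the second from the exponential $s_{B^m}^2$ decay of $B^m-B_\infty$, which is the concrete benefit of doing the analysis at the $m$-step scale rather than the one-step scale. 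Matching constants and regrouping produces the stated inequality.
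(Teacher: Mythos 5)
Your proposal follows essentially the same route as the paper: the $m$-step recursions for $\Delta_y$ and for the iterates, noise separation via Lemma~\ref{lem:noise separation}, rolling-sum summability through $s_A$, $s_B$, $s_{B^m}$, and closing the resulting self-referential system of bounds under the stated stepsize and $m$ conditions. The one under-specified point is the consensus error $\Delta_x^{(k)}=(I-A_\infty)\vx^{(k)}$ at epoch boundaries $k\in[m^*]$, which does not simply telescope into the $L\Delta$ residue but needs its own $m$-step recursion $\Delta_x^{(k+m)}=(A^m-A_\infty)\Delta_x^{(k)}-\alpha\sum_{i=0}^{m-1}(A^{m-i}-A_\infty)\vy^{(k+i)}$ entering the coupled system as a third quantity alongside $\Delta_2$ and $\Delta_y$ (with $\Delta_x^{(0)}=0$); the paper resolves it with exactly the rolling-sum machinery you invoke, so this is a fillable omission rather than a flaw in the approach.
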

\begin{proof}
    See Appendix~\ref{app:subsec:lem 4.4}.
\end{proof}

With the above lemmas, we are ready to state the main result of this paper.
\begin{theorem}\label{thm:main}
     Suppose Assumption~\ref{ass:gradient oracle},~\ref{ass:smooth} and Proposition~\ref{prop:perron} hold. For given positive integers $m$ and $K$, we define $T:=mK$.  When $\alpha \le \frac{1}{10cm\sqrt{s_Bs_{B^m}^2}L}$ and \revision{$m\ge \tilde{C}$}, we have
    \begin{align}\label{ieq:thm:main-1}
        \quad\frac{1}{K}\sum_{k\in [m^*]}\EE[\norm{\nabla f(\hat{x}^{(k)})}^2]\le \frac{4\Delta}{c\alpha T}+\frac{100c\alpha L}{n}\sigma^2+\frac{384s_B^3s_{B^m}^2}{m^2c^2}\sigma^2 + C_{\Delta,1}\frac{L\Delta}{mT}.
    \end{align}
    Furthermore, if we choose 
\revision{
\begin{align}
    m&=\lceil \max\{4c^{-1} s_B^{2.5} \left(\frac{nT\sigma^2}{L\Delta}\right)^{0.25}, \tilde{C}\}  \rceil, \label{xbasbd-1}\\
    \alpha&=\min\left\{\frac{\sqrt{n\Delta}\sigma}{10c\sqrt{LT}}, \frac{1}{10cm\sqrt{s_Bs_{B^m}^2}L}, \frac{1}{6L}\right\}, \label{xbasbd-2}
\end{align}
}
it holds that 
\begin{align}\label{ieq:thm:main-2}
    \min_{t\in\{0,1,\ldots,T-1\}}\EE[\norm{\nabla f(\hat{x}^{(t)})}^2]
    &\le \frac{44\sigma\sqrt{L\Delta}}{\sqrt{nT}}
        +\frac{L\Delta (m^{-1}C_{\Delta,1}+10m\sqrt{s_Bs_{B^m}^2}+c^{-1}) }{T}\\
    &\revision{= \frac{44\sigma\sqrt{L\Delta}}{\sqrt{nT}}
        +\frac{40\,s_B^{3}s_{B^m}(n\sigma^2)^{1/4}}{c}\Big(\frac{L\Delta}{T}\Big)^{\frac34}}\nonumber
        \\\qquad \revision{ +\frac{(c^{-1}+10\sqrt{s_Bs_{B^m}^2}\Tilde{C})L\Delta}{T}}
        & \revision{+\frac{10s_As_{A^m}^2n^2+312s_B^2s_{B^m}n}{4c\,s_B^{2.5}(n\sigma^2)^{1/4}}\Big(\frac{L\Delta}{T}\Big)^{\frac54}}\nonumber\\
    &= \frac{44\sigma\sqrt{L\Delta}}{\sqrt{nT}}
        + \cO\left(\frac{L\Delta}{T}\right)^{\frac{3}{4}}
        + \cO\left(\frac{L\Delta}{T}\right)\nonumber,
\end{align}
\revision{where $\Tilde{C}= \max \Big\{ 
    n^{-1}s_A^2, 20c^{-2}ns_A^2,\;c^{-2}n^2s_B^2,  30c^{-2}s_B^3,
    c^{-2}n^2(8s_A^2s_B+2ns_B+2s_As_B\\+\sqrt{n}s_B), 20c^{-2}n^3s_A^2s_B^2
\Big\}.
$
}
\end{theorem}
\begin{proof}
    See Appendix~\ref{app:subsec:main}.
\end{proof}

\noindent\textbf{Comparison with existing analyses.} Theorem~\ref{thm:main} provides the first convergence guarantee for stochastic \pushpull on nonconvex problems. In contrast, prior analyses focus either on the strongly convex setting~\cite{saadatniaki2020decentralized,xin2019SAB} or on \pushpull variants restricted to specific network topologies~\cite{you2024bary}. The leading term $\tfrac{44\sigma\sqrt{L\Delta}}{\sqrt{nT}}$ demonstrates a linear speedup with respect to the number of nodes $n$ for any topology satisfying Proposition~\ref{prop:s_A,s_B}. Consequently, \pushpull requires $\mathcal{O}(1/(n\epsilon^2))$ iterations to achieve an $\epsilon$-accurate solution, with the complexity decreasing linearly as $n$ increases.

\vspace{1mm}
\noindent \revision{\textbf{The impact of $m$-step:} We would like to clarify that $m$ is only an auxiliary analytical window rather than an algorithmic parameter of \pushpull: it appears solely in the analysis and is not a hyperparameter of the algorithm. Although $m$ enters one branch of the  upper bound of the learning rate \eqref{xbasbd-2}, this branch is inactive for large $T$. To see it, since $\tilde{C}$ is independent of $T$, we have $m = \Theta\big((nT)^{1/4}\big)$ when $T$ is sufficiently large, and hence the $m$-dependent branch scales as $\Theta\big((nT)^{-1/4}\big)$, which is dominated by the standard branch $\Theta\big(\frac{1}{c}\sqrt{n/T}\big)$ once $T$ is sufficiently large. Consequently, the effective learning rate satisfies $c\alpha = \Theta\big(\sqrt{n/T}\big)$, matching the order of the learning rates in standard decentralized SGD~\cite{koloskova2020unified} and gradient tracking~\cite[Theorem 1]{alghunaim2022unified}. Therefore, $m$ affects only a branch of the learning-rate bound \eqref{xbasbd-2} that is inactive in the regime of sufficiently large $T$. This also explains why the multi-step parameter $m$ does not affect the linear-speedup term (i.e., the first term) in \eqref{ieq:thm:main-2}.
}

\section{Experiment}\label{sec:experiment}
In this section, we empirically examine the linear speedup properties of the \pushpull algorithm across a variety of network topologies.  
Each node in the stochastic gradient oracle operates on a local dataset, computing gradients from randomly sampled data points at every iteration.  
The experimental study consists of two phases: (i) nonconvex regression on synthetic data, and (ii) training a fully connected neural network on the heterogeneously distributed MNIST dataset.  

To further evaluate practical performance on standard benchmarks, we extend our experiments to training ResNet-18 on CIFAR-10.  
The distributed training is performed across multiple nodes ($n > 1$) under different network topologies, including directed exponential graphs, grid graphs, and random graphs.  
For reference, all results are systematically compared against the single-node baseline ($n = 1$) to quantify the speedup obtained through distributed optimization.  

Complete implementation details are provided in Appendix~\ref{appendix:experiment}, and the source code with experimental scripts is publicly available at \href{https://github.com/pkumelon/PushPull}{our GitHub repository}.

\begin{figure}[ht]
    \centering
    \begin{minipage}{0.3\linewidth}
        \centering
        \includegraphics[width=\linewidth]{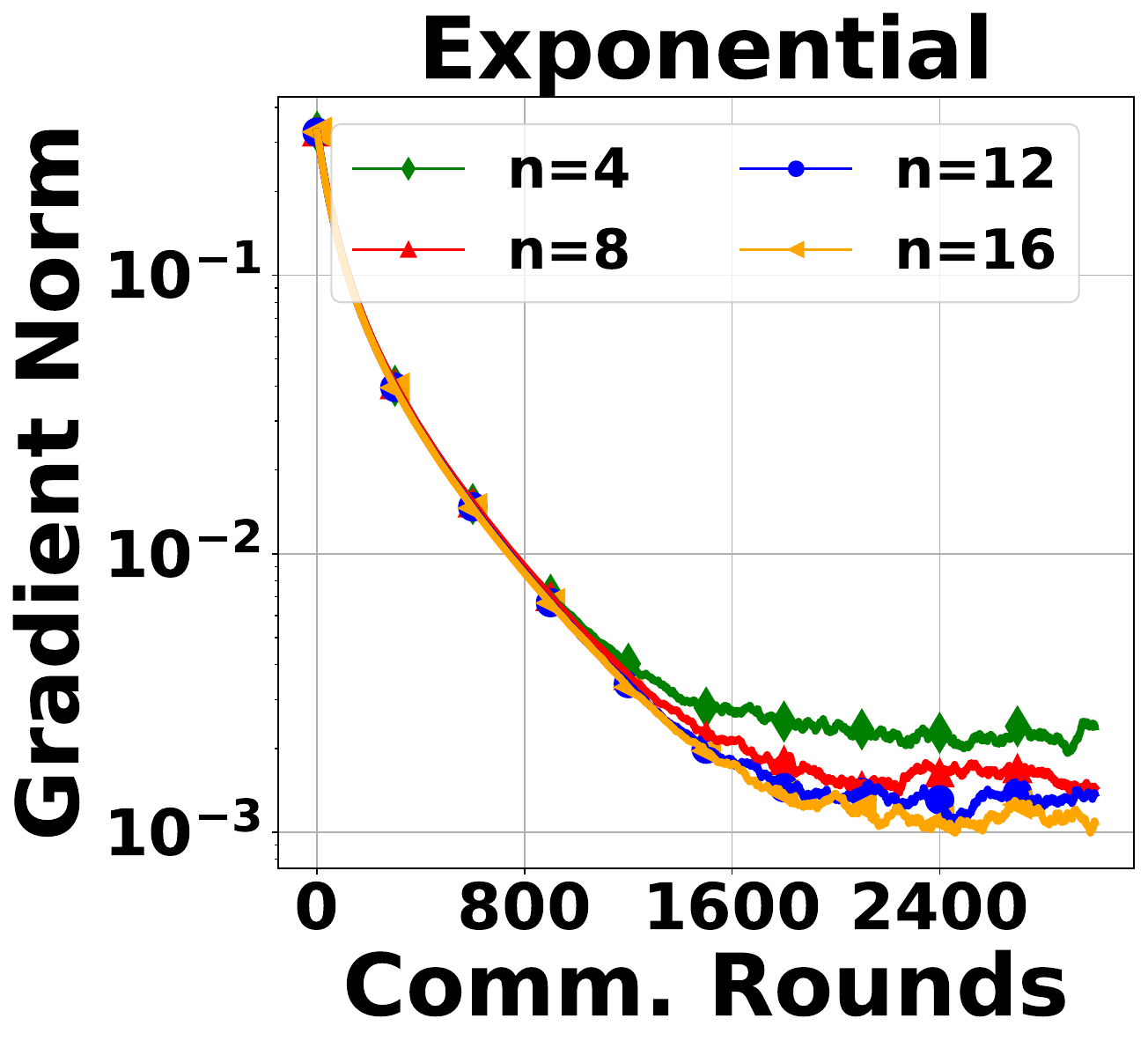}
    \end{minipage}
    \hspace{0.02\linewidth}
    \begin{minipage}{0.3\linewidth}
        \centering
        \includegraphics[width=\linewidth]{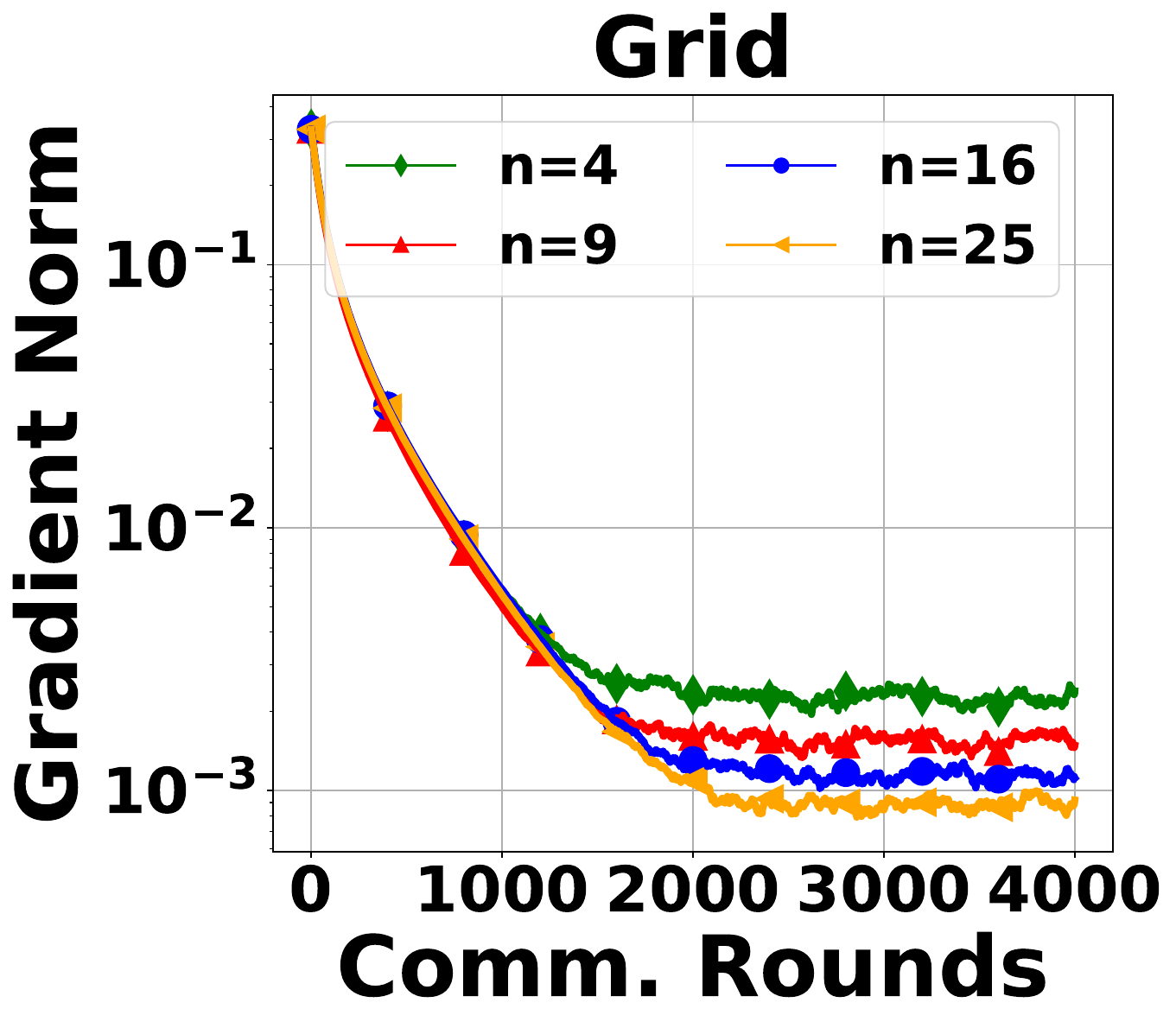}
    \end{minipage}
    \hspace{0.02\linewidth}
    \begin{minipage}{0.3\linewidth}
        \centering
        \includegraphics[width=\linewidth]{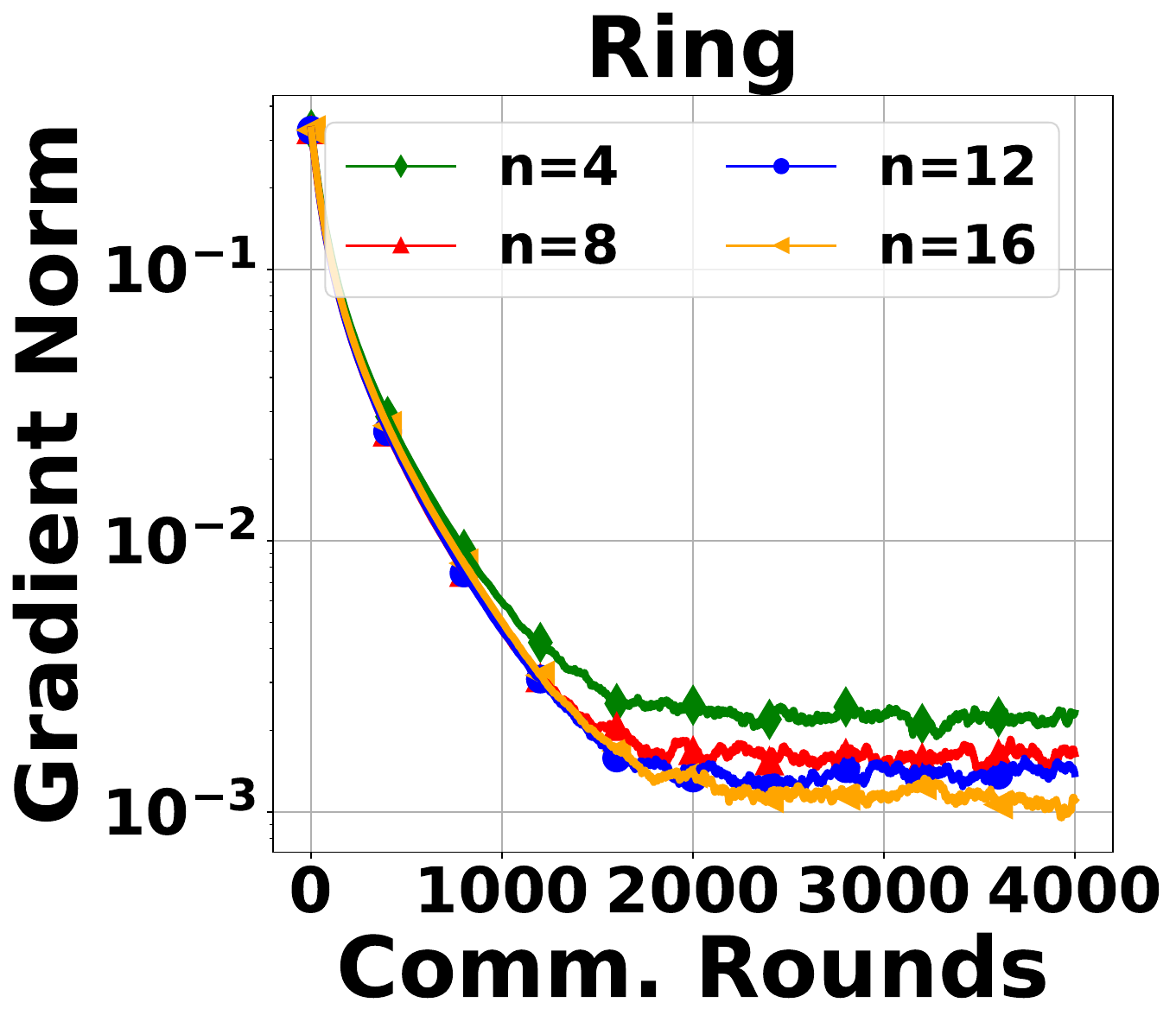}
    \end{minipage}
    
    \vspace{0.02\linewidth}
    
    \begin{minipage}{0.3\linewidth}
        \centering
        \includegraphics[width=\linewidth]{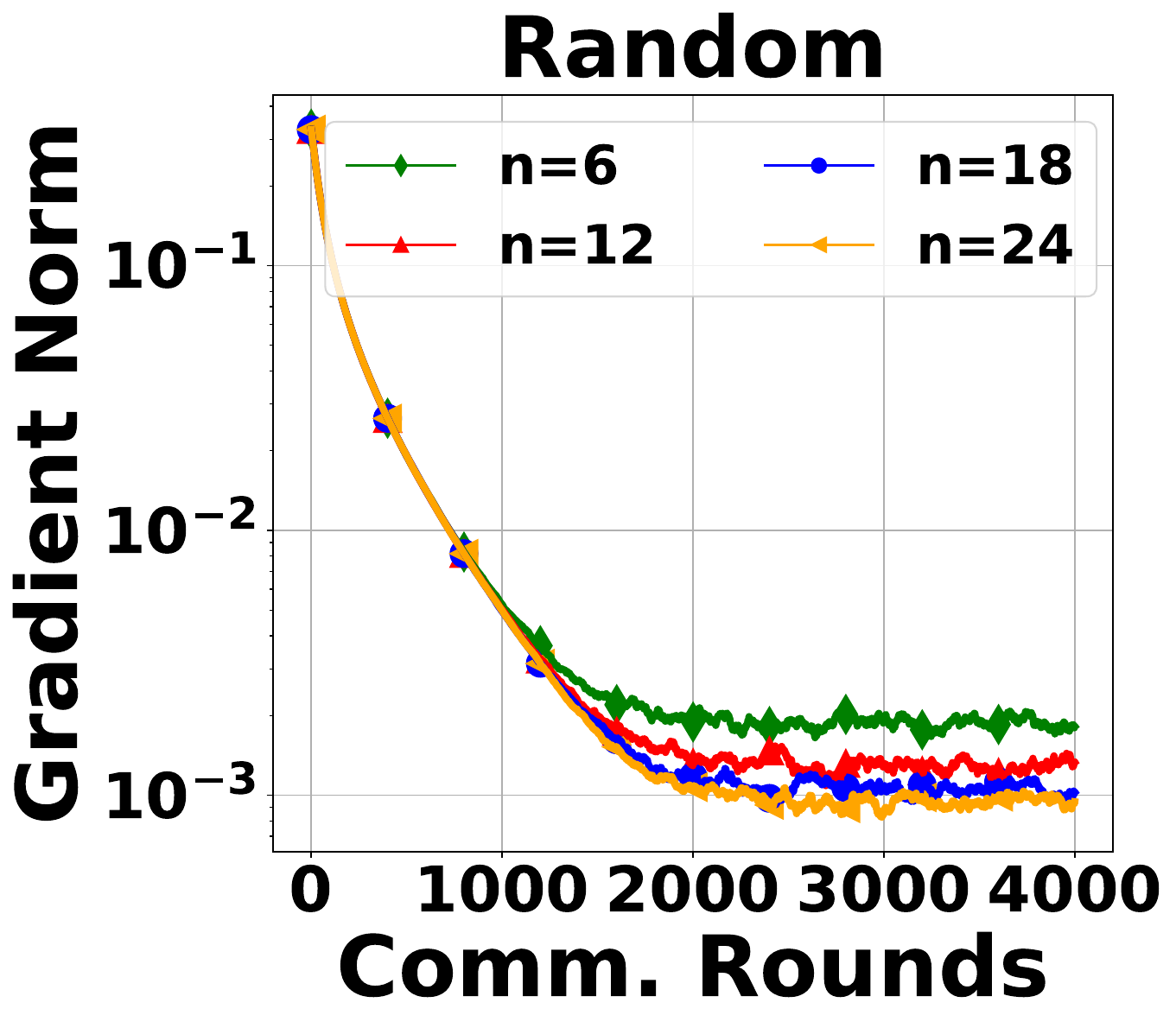}
    \end{minipage}
    \hspace{0.02\linewidth}
    \begin{minipage}{0.3\linewidth}
        \centering
        \includegraphics[width=\linewidth]{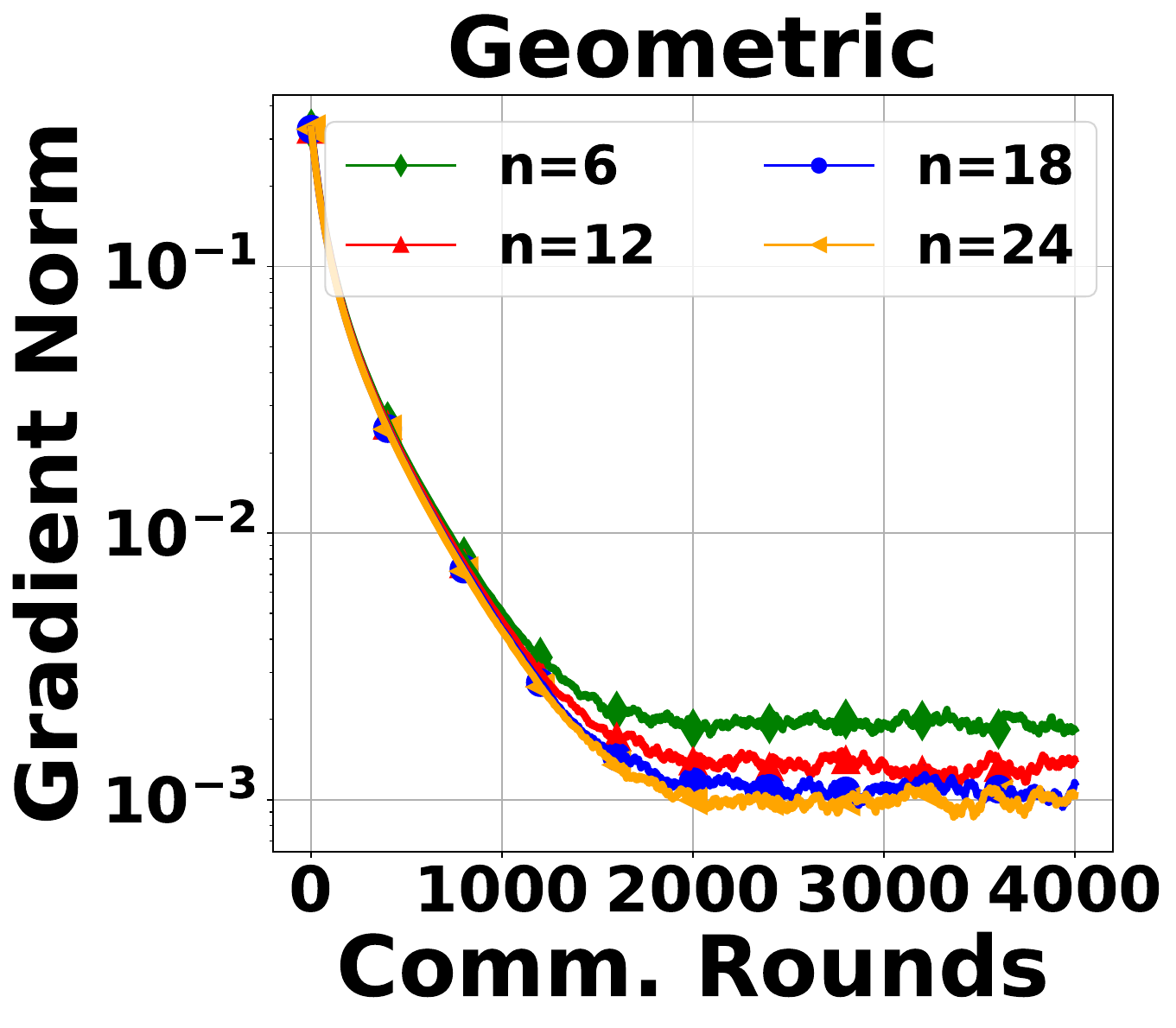}
    \end{minipage}
    \hspace{0.02\linewidth}
    \begin{minipage}{0.3\linewidth}
        \centering
        \includegraphics[width=\linewidth]{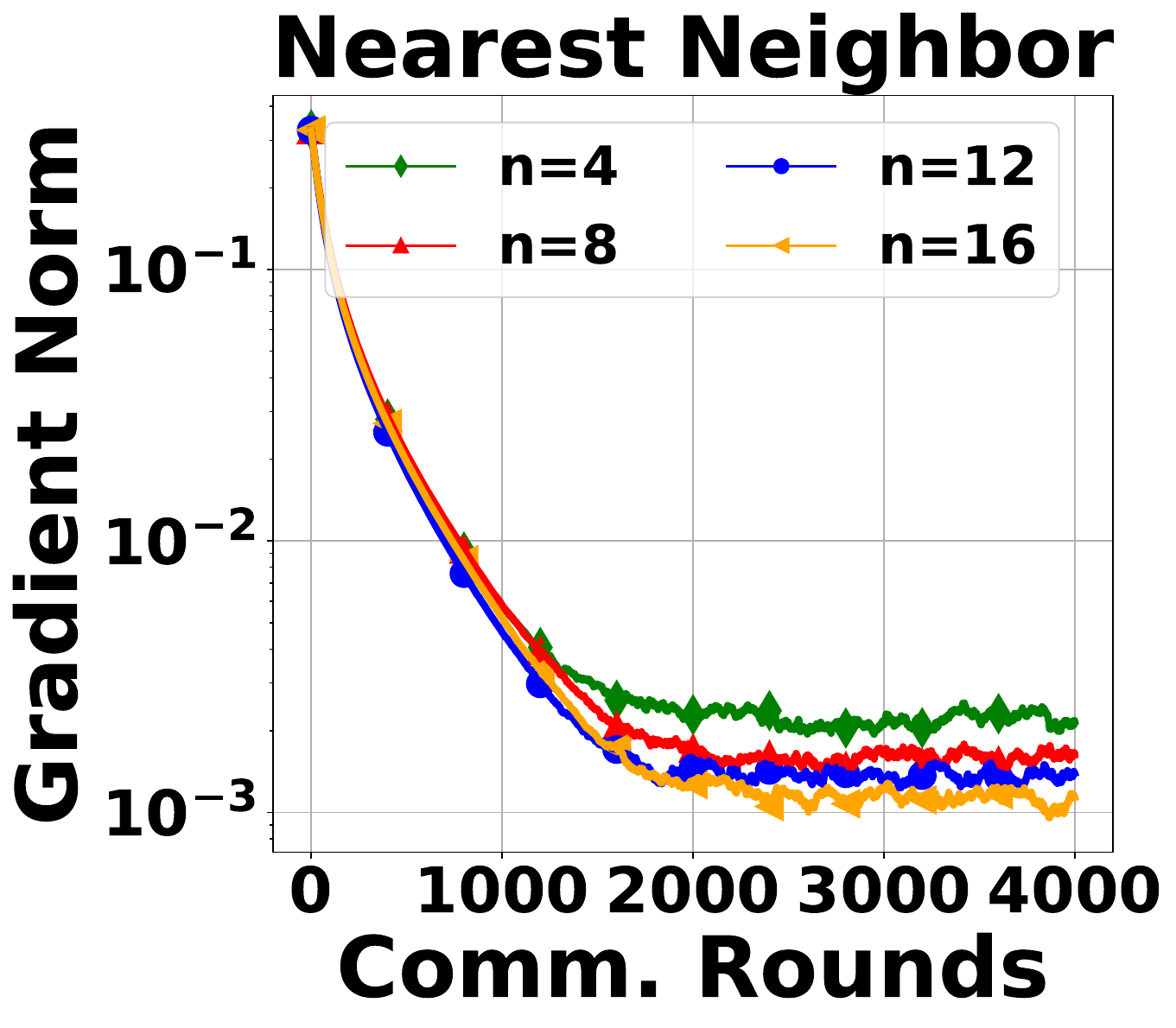}
    \end{minipage}

    \caption{\revision{Nonconvex logistic regression on the synthetic dataset. Each panel plots the squared gradient norm $\|\nabla f(\hat{x}^{(t)})\|^2$ versus iteration $t$, with the $y$-axis in log scale. The panels are ordered as directed exponential, directed grid, directed ring, undirected random, undirected geometric, and undirected nearest-neighbor graphs. Curves correspond to $n\in\{4,8,12,16\}$ nodes, except for the grid graph where $n\in\{4,9,16,25\}$; all runs use learning rate $\mathrm{lr}=0.005$. The lower noise floor for larger $n$ illustrates the linear-speedup trend.}}
    \label{fig::linear_speedup_synthetic}
\end{figure}

\subsection{Nonconvex Logistic Regression on Synthetic Dataset}\label{subsec::synthetic}

In the first experiment, the loss function at each node is logistic regression with nonconvex regularization terms.  
We evaluate six network topologies: exponential, grid, and ring graphs (directed), and random, geometric, and nearest-neighbor graphs (undirected).  
The mixing matrices $A$ and $B$ are constructed using a specialized procedure, which yields row-stochastic or column-stochastic matrices, but not doubly stochastic ones. 

As shown in Figure~\ref{fig::linear_speedup_synthetic}, the results exhibit clear linear speedup.  
Across all topologies, the gradient noise level decreases proportionally to the square root of the number of nodes, given the same number of communication rounds.  
The learning rate is fixed at $lr = 0.005$ for all experiments.

\subsection{Training Neural Network for MNIST Classification}

In this part, we trained a three-layer neural network for MNIST classification.  
The same network topologies as in the synthetic dataset experiment were used.  
The MNIST data are distributed heterogeneously according to a Dirichlet distribution. 
As shown in Figure~\ref{fig::linear_speedup_mnist}, \pushpull achieves linear speedup on both directed and undirected topologies.

\begin{figure}[ht]
    \centering
    \begin{minipage}{0.3\linewidth}
        \centering
        \includegraphics[width=\linewidth]{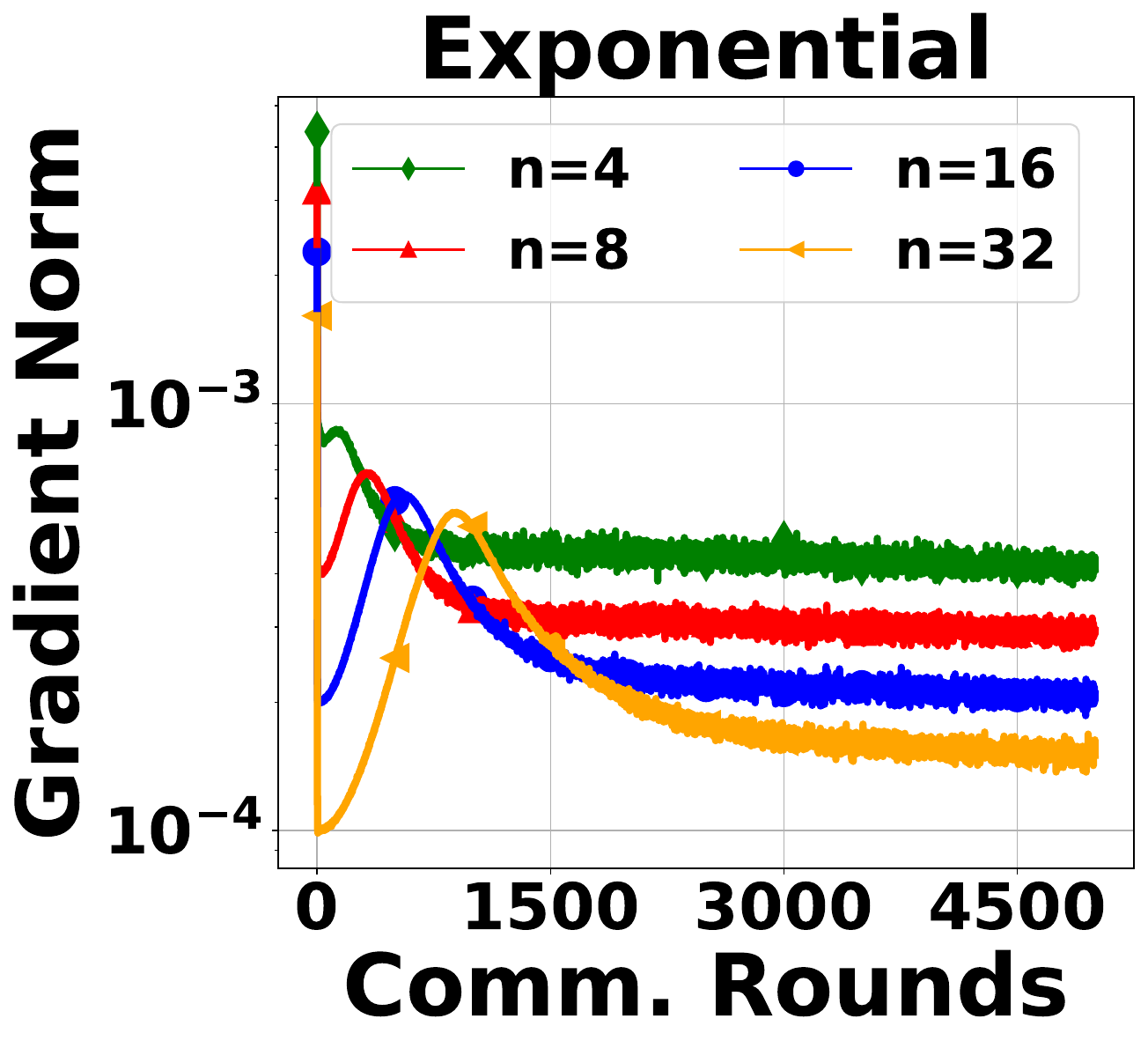}
    \end{minipage}
    \hspace{0.02\linewidth}
    \begin{minipage}{0.3\linewidth}
        \centering
        \includegraphics[width=\linewidth]{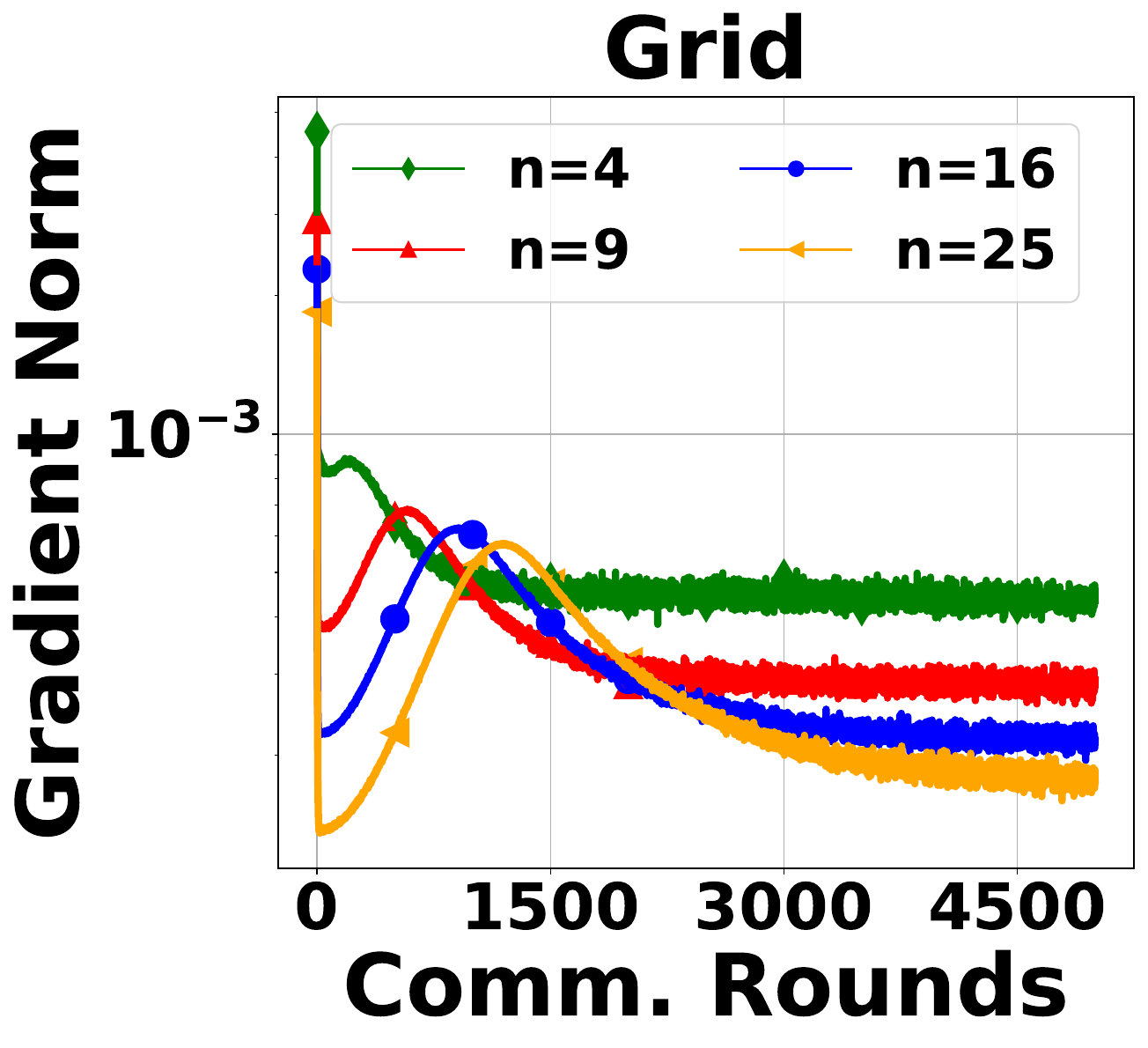}
    \end{minipage}
    \hspace{0.02\linewidth}
    \begin{minipage}{0.3\linewidth}
        \centering
        \includegraphics[width=\linewidth]{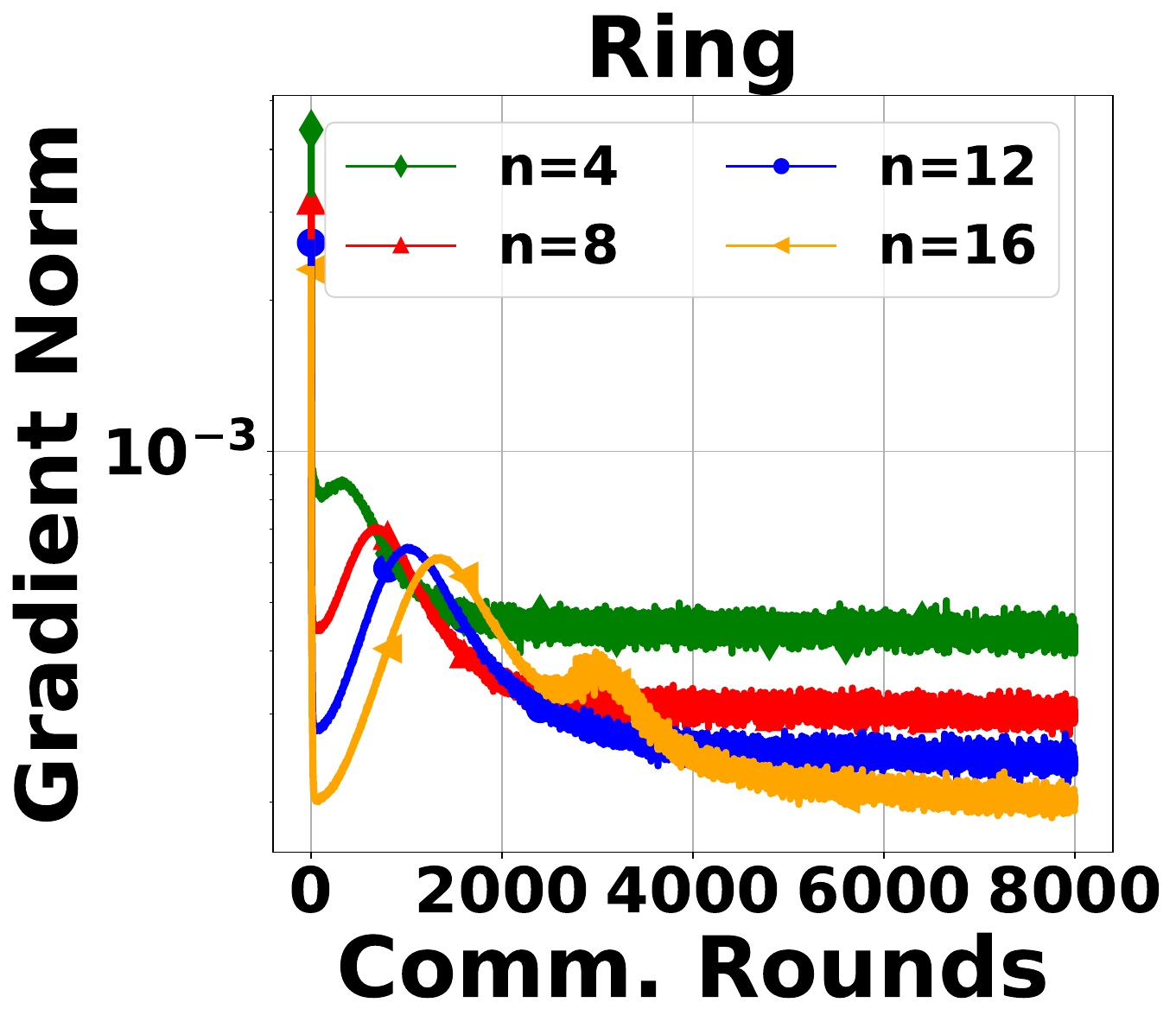}
    \end{minipage}
    
    \vspace{0.02\linewidth}
    
    \begin{minipage}{0.3\linewidth}
        \centering
        \includegraphics[width=\linewidth]{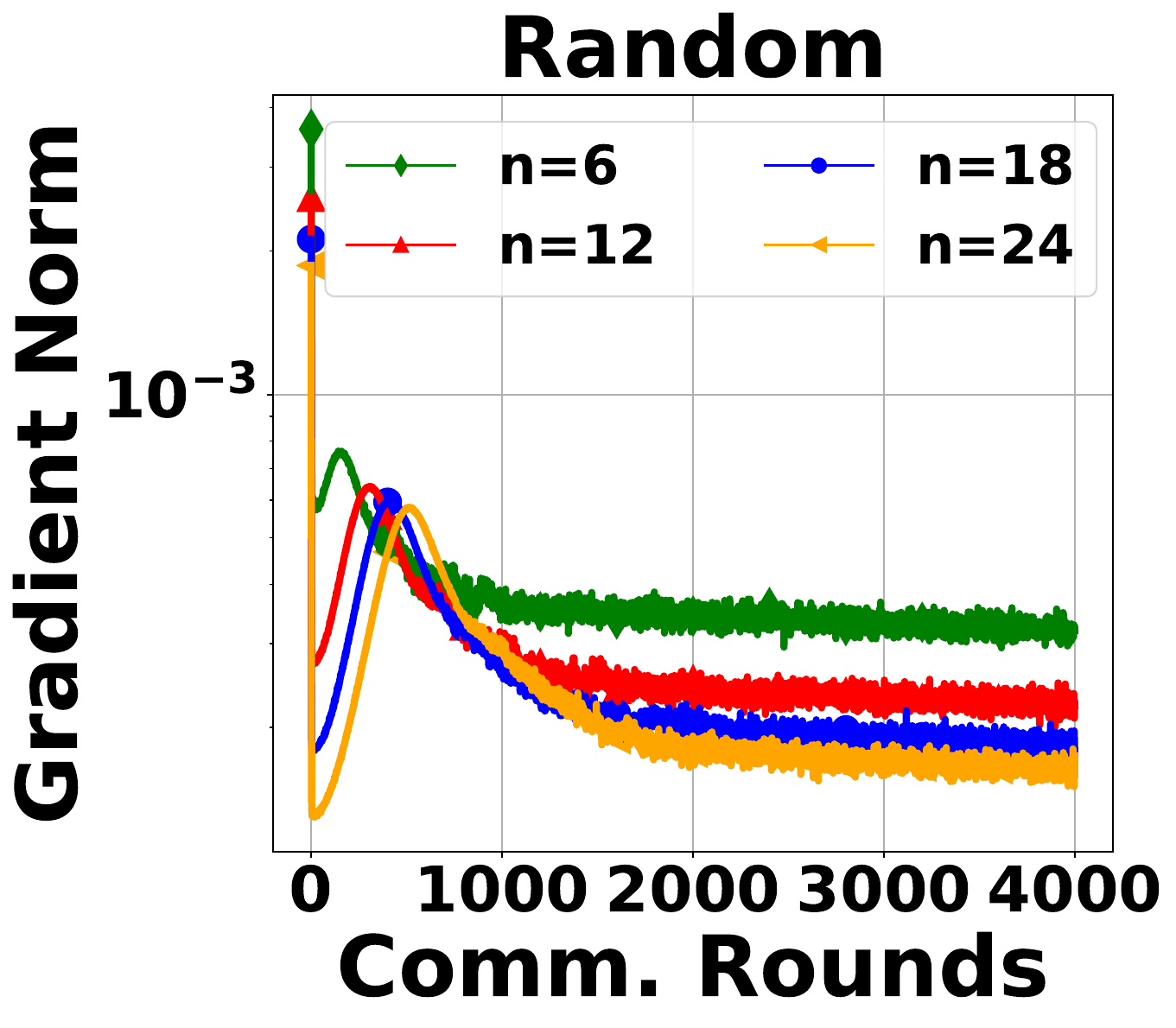}
    \end{minipage}
    \hspace{0.02\linewidth}
    \begin{minipage}{0.3\linewidth}
        \centering
        \includegraphics[width=\linewidth]{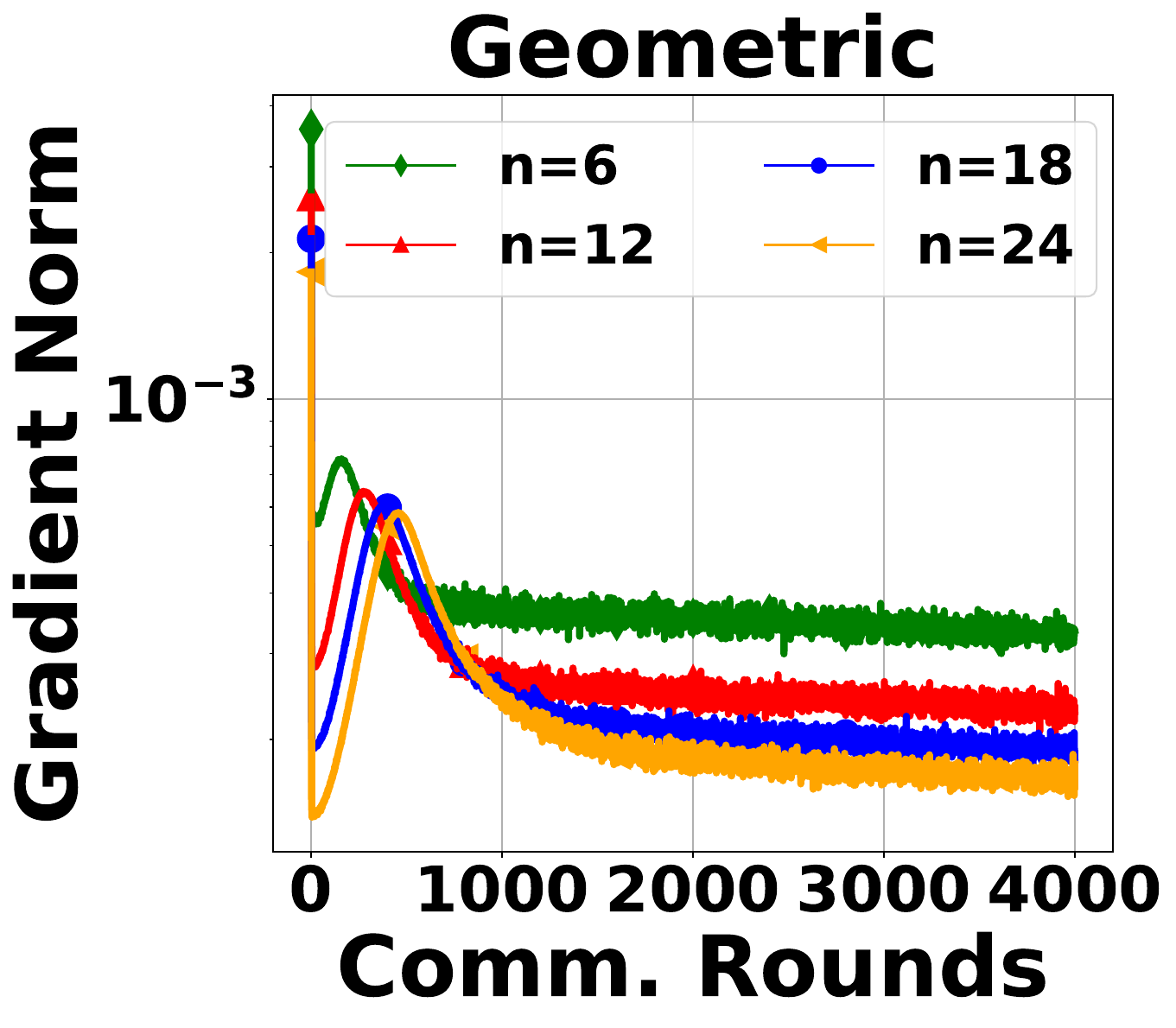}
    \end{minipage}
    \hspace{0.02\linewidth}
    \begin{minipage}{0.3\linewidth}
        \centering
        \includegraphics[width=\linewidth]{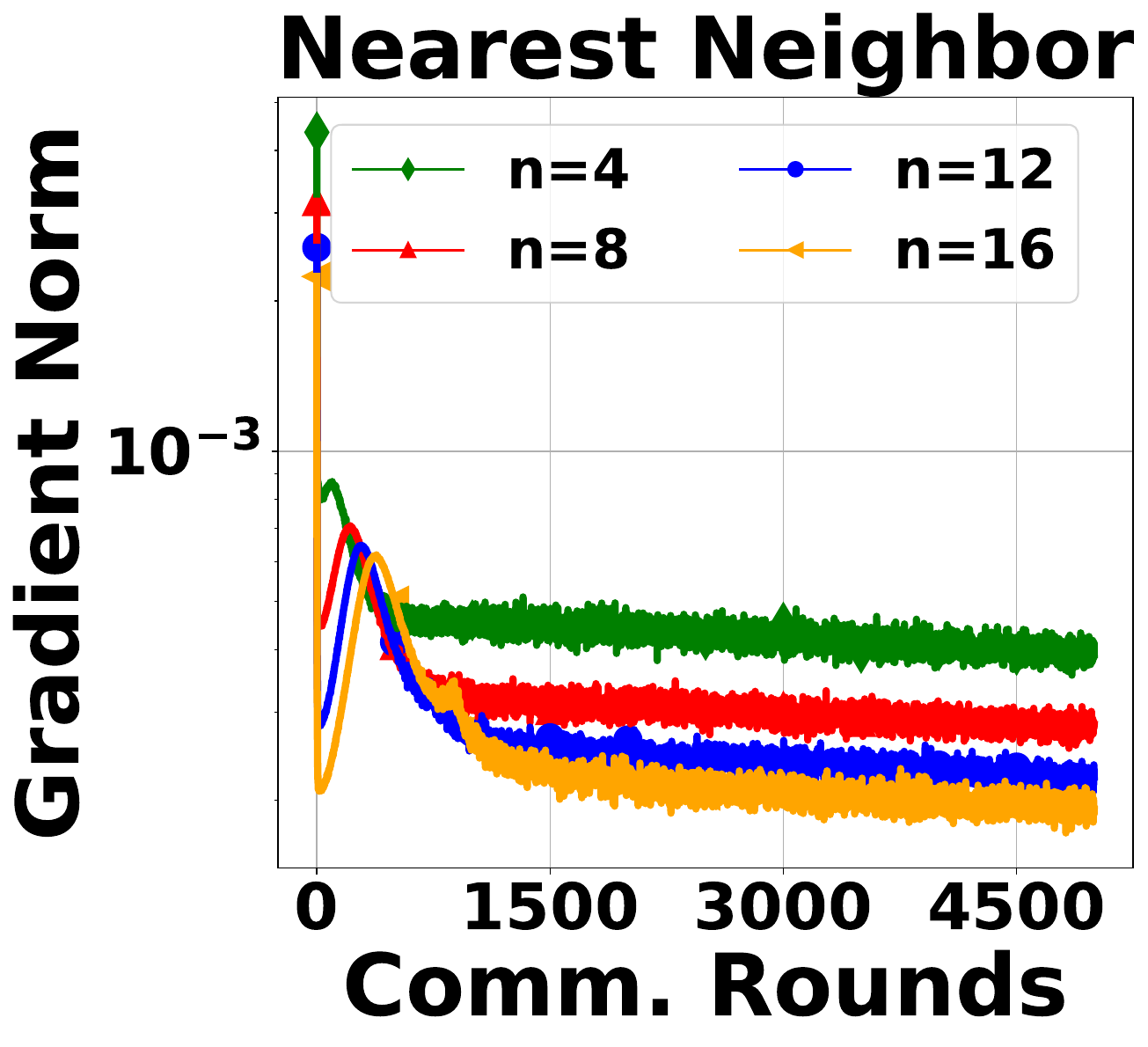}
    \end{minipage}

    \caption{\revision{MNIST classification with a three-layer fully-connected neural network and a heterogeneous Dirichlet data partition across nodes. Each panel plots training loss versus iteration count. The panels are ordered as directed exponential, directed grid, directed ring, undirected random, undirected geometric, and undirected nearest-neighbor graphs. Curves correspond to different node numbers $n$ as shown in the legend, and the learning rate is fixed within each panel.}}
    \label{fig::linear_speedup_mnist}
\end{figure}

\subsection{Training ResNet-18 for CIFAR-10 Image Classification}

In the third experiment, we trained ResNet-18 for CIFAR-10 classification across varying numbers of nodes and network topologies.  
We considered three representative topologies: directed exponential graphs, grid graphs, and random graphs.  
For reference, $n=1$ corresponds to training ResNet-18 on a single node without distributed optimization.  

Figure~\ref{fig::acc_cifar10} illustrates the performance of the \pushpull method.
We observe that its accuracy is initially lower than that of single-node training, due to the overhead introduced by network topology.
\revision{However, after a sufficient number of epochs, \pushpull\ matches and, on certain topologies, slightly surpasses the single-node baseline in test accuracy. All curves use the same learning rate and the same effective batch size, so this gap is not due to a larger batch size or a different learning-rate schedule. The result suggests that decentralized dynamics may affect generalization, although the effect can depend on topology and hyperparameter tuning. Since the single-node baseline was not tuned separately, we view this as an empirical observation rather than a universal advantage of \pushpull.}

\begin{figure}[ht]
    \centering
    \begin{minipage}{0.3\linewidth}
        \centering
        \includegraphics[width=\linewidth]{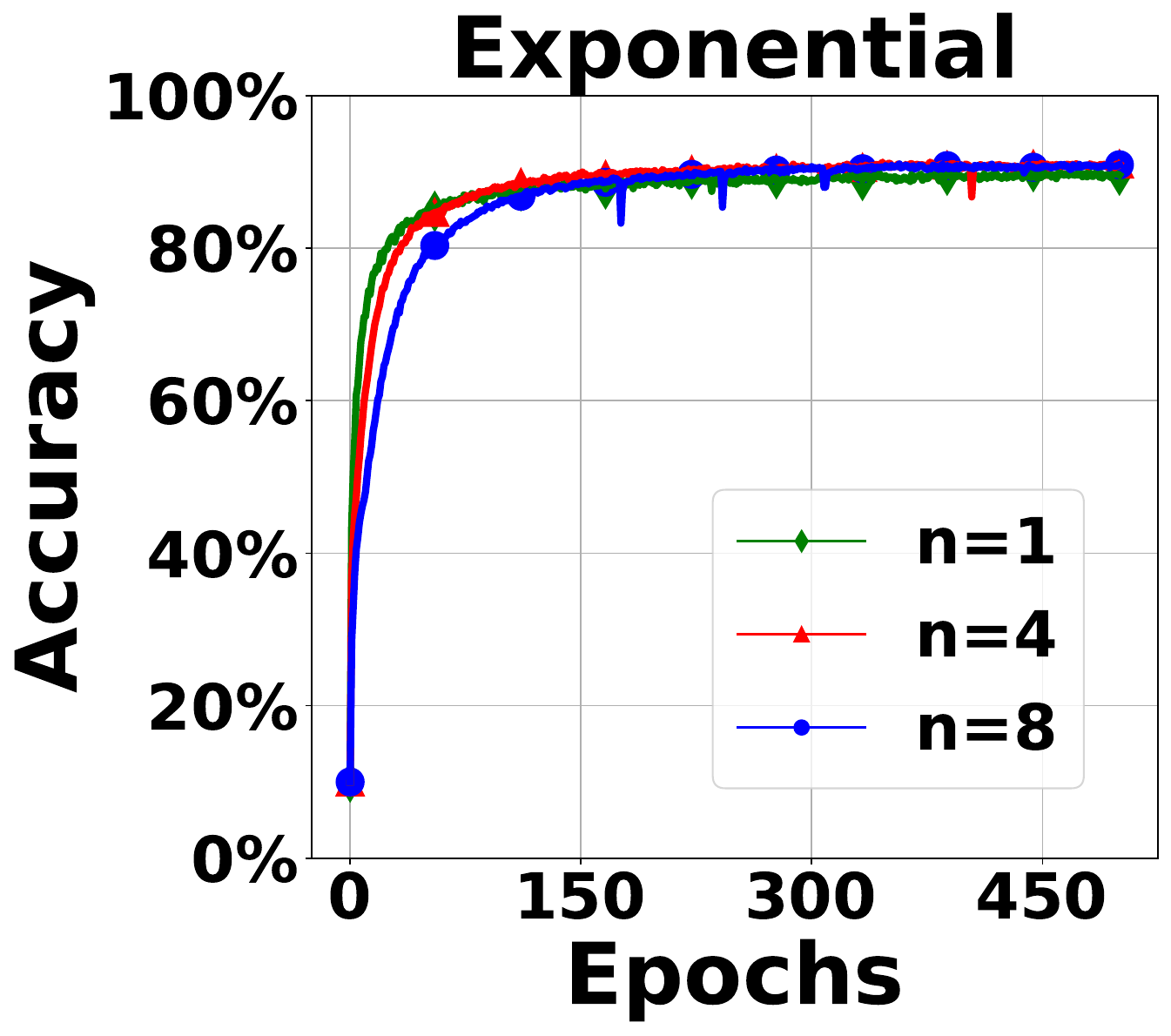}
    \end{minipage}
    \hspace{0.02\linewidth}
    \begin{minipage}{0.3\linewidth}
        \centering
        \includegraphics[width=\linewidth]{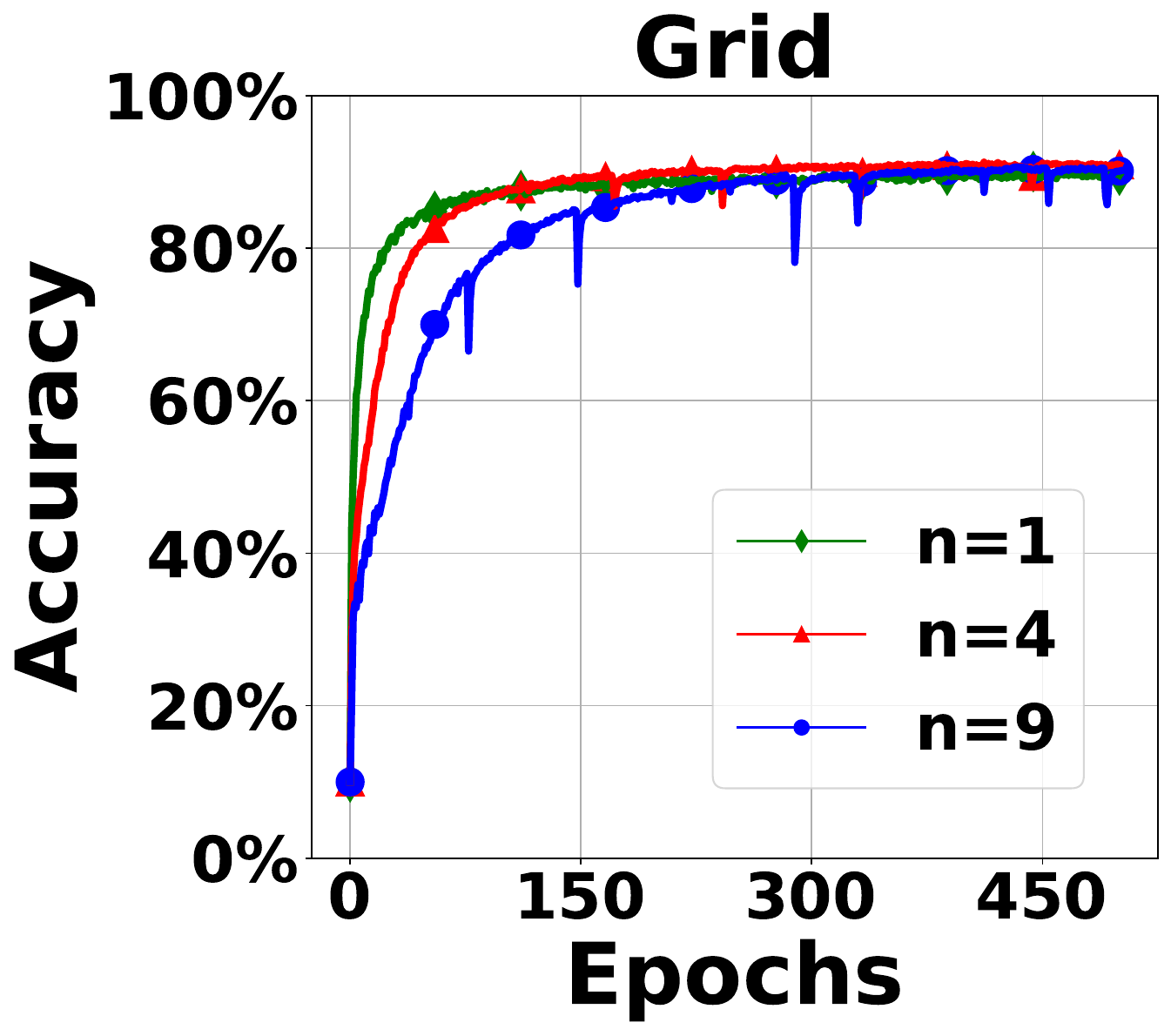}
    \end{minipage}
    \hspace{0.02\linewidth}
    \begin{minipage}{0.3\linewidth}
        \centering
        \includegraphics[width=\linewidth]{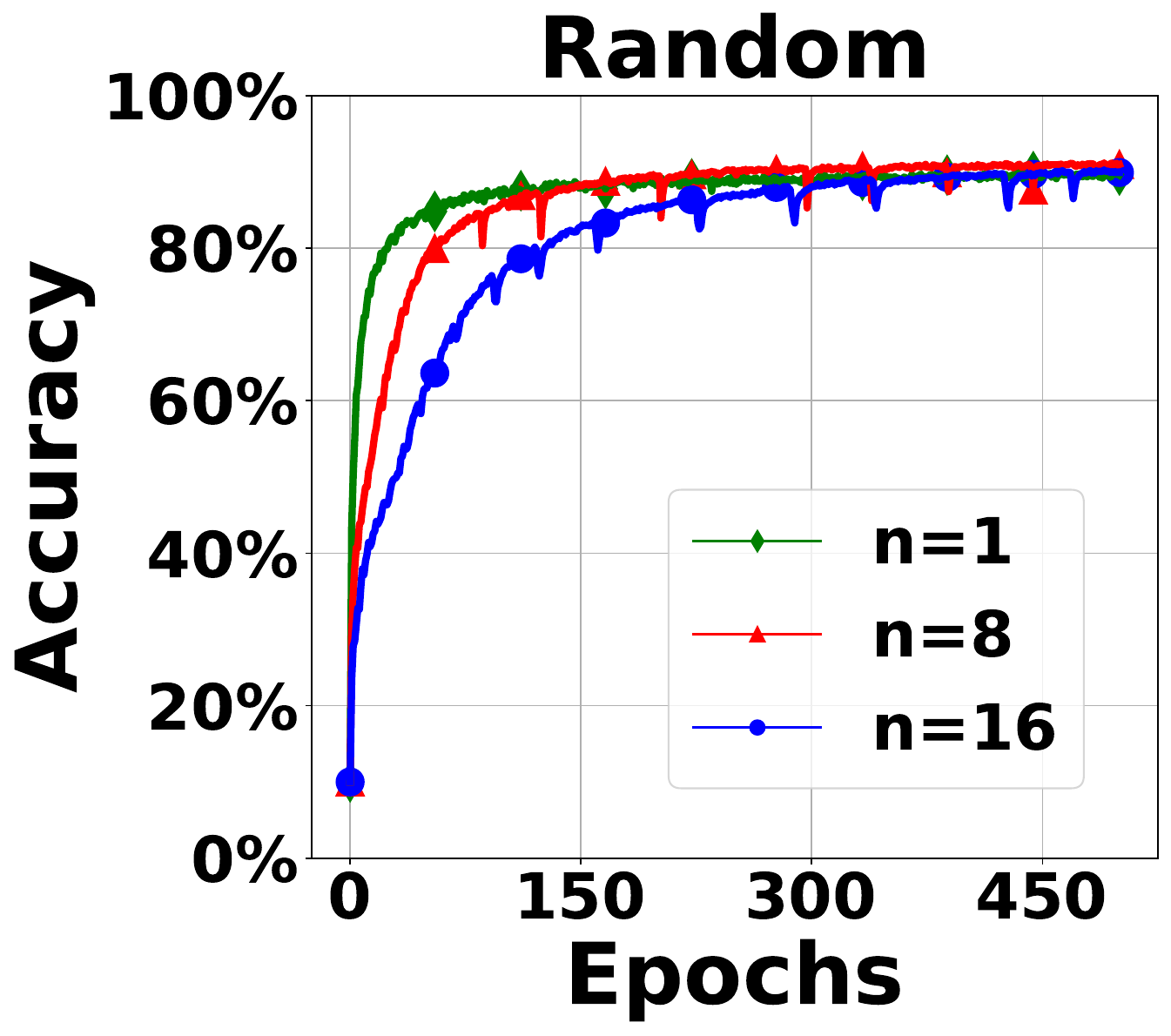}
    \end{minipage}
    
    \vspace{0.02\linewidth}
    
    \begin{minipage}{0.3\linewidth}
        \centering
        \includegraphics[width=\linewidth]{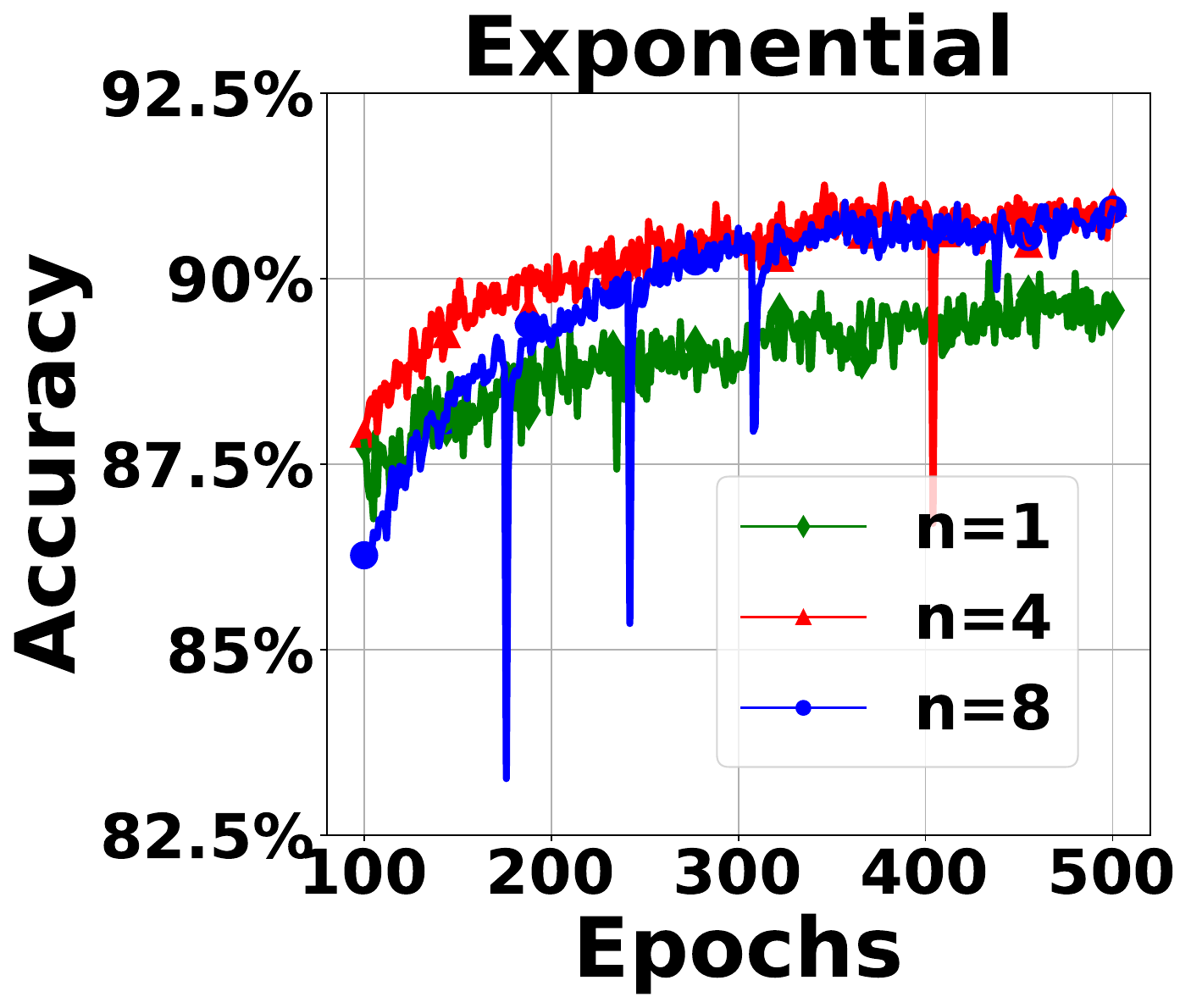}
    \end{minipage}
    \hspace{0.02\linewidth}
    \begin{minipage}{0.3\linewidth}
        \centering
        \includegraphics[width=\linewidth]{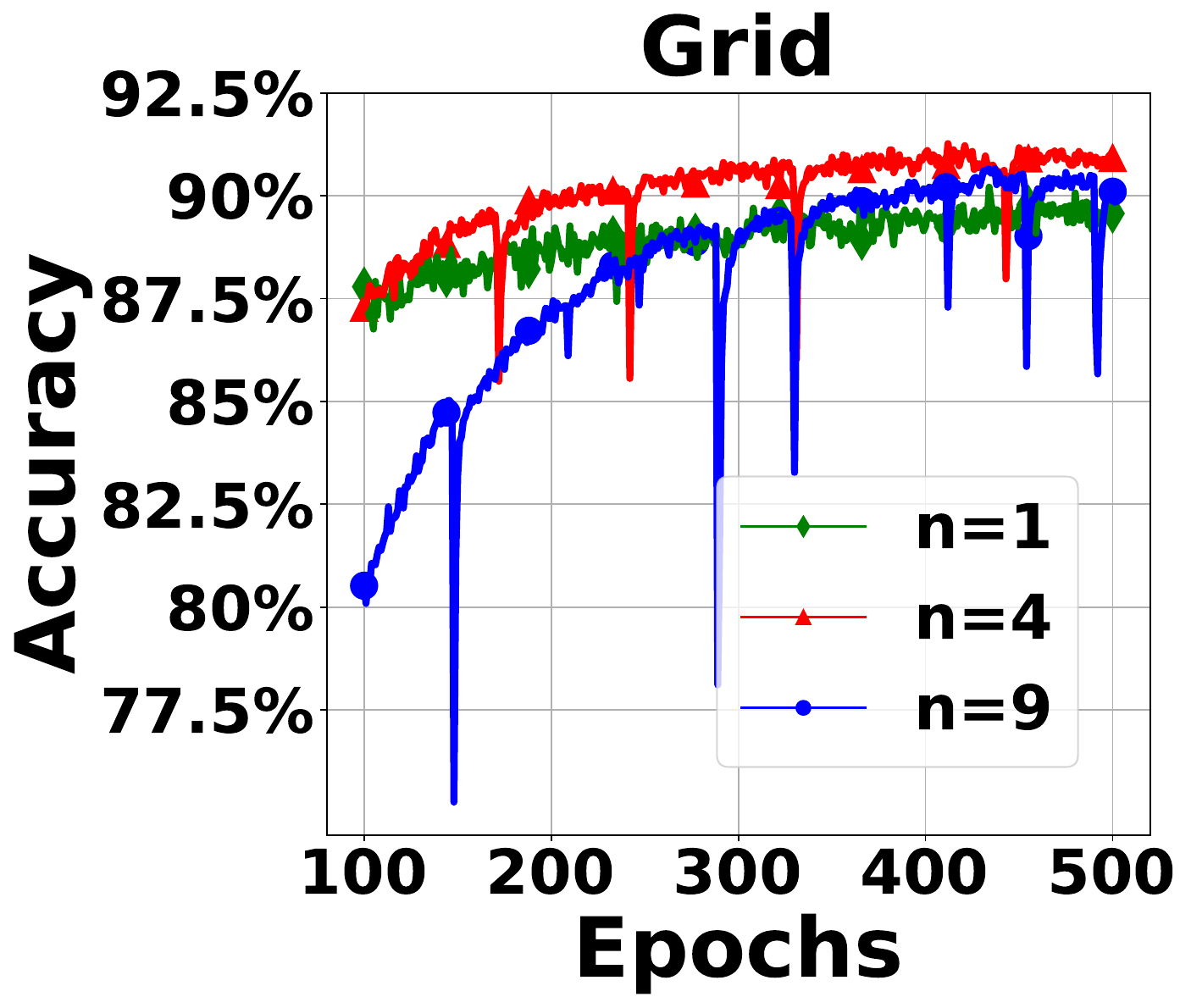}
    \end{minipage}
    \hspace{0.02\linewidth}
    \begin{minipage}{0.3\linewidth}
        \centering
        \includegraphics[width=\linewidth]{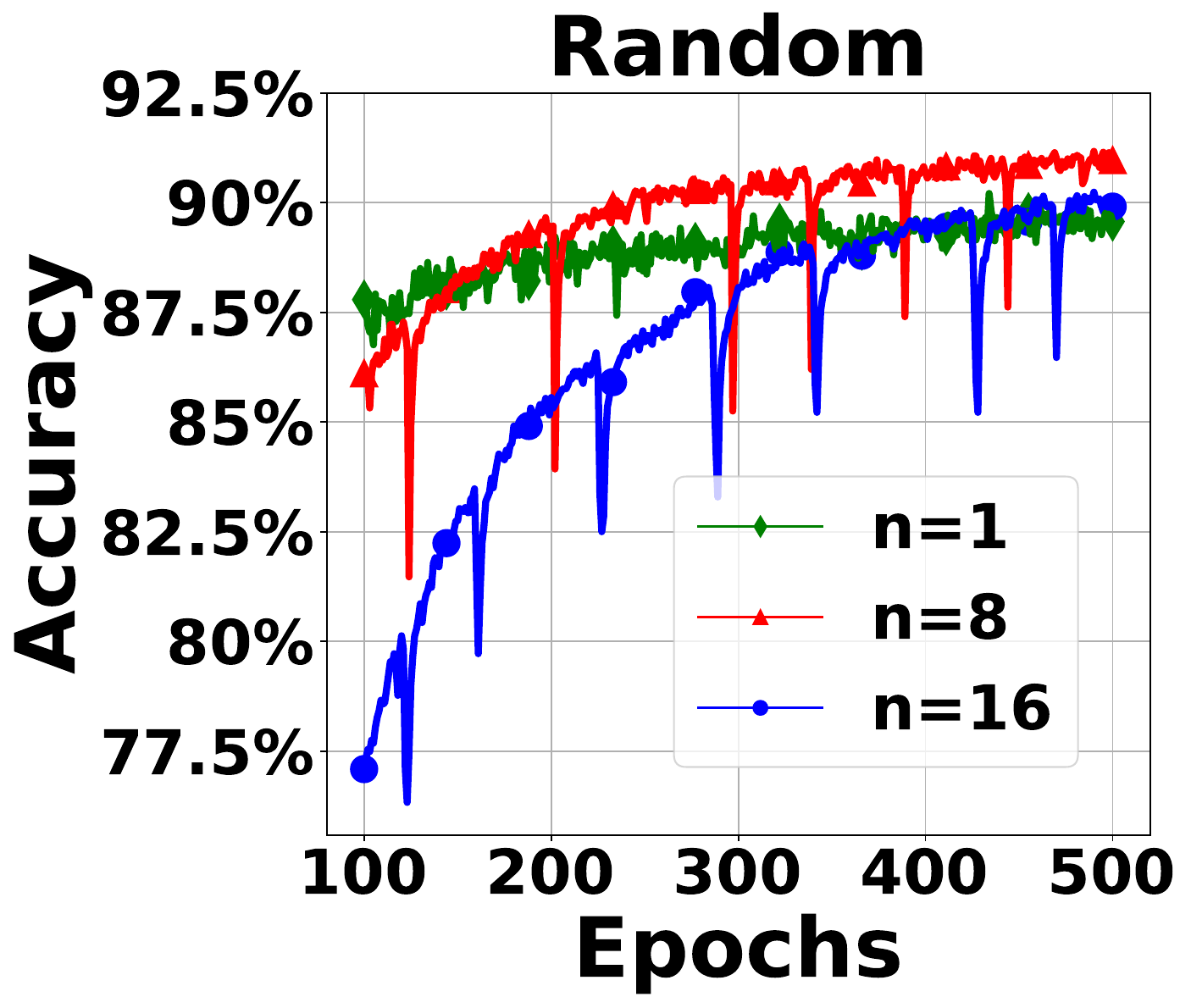}
    \end{minipage}

    \caption{\revision{CIFAR-10 test accuracy of ResNet-18 trained by \pushpull. Curves correspond to different node numbers $n$ as shown in the legend; $n=1$ is the single-node baseline without distributed optimization. The three columns correspond, from left to right, to directed exponential, grid, and random graphs. The top row gives the full test-accuracy trajectory over epochs, and the bottom row zooms in on the same runs after epoch $100$. All runs use learning rate $0.005$.}}
    \label{fig::acc_cifar10}
\end{figure}

\section{Conclusion}\label{sec:conclusion}
In this paper, we develop a novel analysis and establish, for the first time, the linear-speedup property of the stochastic \pushpull algorithm over arbitrary strongly connected topologies. This result advances the understanding of \pushpull and confirms its effectiveness. Our theoretical findings are further validated by numerical experiments.

\bibliographystyle{unsrt}
\bibliography{references}

\appendix
\setcounter{table}{0}
\setcounter{figure}{0}
\renewcommand{\thetable}{A\arabic{table}}
\renewcommand{\thefigure}{A\arabic{figure}}
\section{Linear Algebra Inequalities}\label{app:sec:rolling sum}

\begin{lemma}[\sc \small Rolling Sum Lemma]\label{lem:rolling sum}
For a row-stochastic weight matrix satisfying Proposition~\ref{prop:perron}, the following inequality holds for any $ T\ge 0$:
    \begin{align}\label{eq:rolling sum A}
    \sum_{k=0}^T\norm{\sum_{i=0}^k(A^{k-i}-A_\infty)\Delta^{(i)}}_F^2\leq s_A^2\sum_{i=0}^T\norm{\Delta^{(i)}}_F^2,
    \end{align}
    where $\Delta^{(i)}\in \RR^{n\times d}$ are arbitrary matrices, $s_A$ is defined as in~\eqref{eq:s_A definition}. Furthermore, if Assumption~\ref{ass:graph} and~\ref{ass:matrix} hold, we have
    \begin{align*}
        s_A:= \max\{\sum_{i= 0}^\infty\norm{A^i-A_\infty}_2, \sum_{i= 0}^\infty\norm{A^i-A_\infty}_2^2\}\le \frac{M_A^2(1+\frac{1}{2}\ln(\kappa_A))}{1-\beta_A}.
    \end{align*} 
     where $\beta_A$, $\kappa_A, M_A$ are defined in Proposition~\ref{prop:s_A,s_B}. The row-stochastic matrix \(A\) and the subscript $A$ can be replaced by a column-stochastic matrix \(B\) without changing the lemma.
\end{lemma}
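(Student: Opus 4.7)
The plan is to split the lemma into two parts: first, establish the scalar rolling-sum inequality \eqref{eq:rolling sum A} by reducing it to a discrete Young-type convolution bound; second, quantify $s_A$ in terms of $M_A$, $\kappa_A$, $\beta_A$ via an optimally split sum that combines the uniform bound $M_A$ on the initial terms with geometric decay of the tail.

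For \eqref{eq:rolling sum A}, I would set $c_j := \norm{A^j - A_\infty}_2$ and $d_i := \fnorm{\Delta^{(i)}}$. Sub-multiplicativity of the induced $\ell_2$-norm against the Frobenius norm, together with the Frobenius triangle inequality, yields $\fnorm{\sum_{i=0}^k (A^{k-i}-A_\infty)\Delta^{(i)}} \le \sum_{i=0}^k c_{k-i}\, d_i$, reducing the claim to the scalar bound $\sum_{k=0}^T(\sum_{i=0}^k c_{k-i} d_i)^2 \le s_A^2 \sum_{i=0}^T d_i^2$. I would apply Cauchy--Schwarz inside, $(\sum_i c_{k-i} d_i)^2 \le (\sum_i c_{k-i})(\sum_i c_{k-i} d_i^2) \le s_A \sum_i c_{k-i} d_i^2$, using $\sum_i c_i \le s_A$ by definition of $s_A$, and then swap the order of summation, $\sum_k \sum_i c_{k-i} d_i^2 = \sum_i d_i^2 \sum_{k \ge i} c_{k-i} \le s_A \sum_i d_i^2$, to conclude. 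This is a discrete Young's convolution inequality in disguise.

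For the quantitative bound on $s_A$, the key identity is $A^k - A_\infty = (A - A_\infty)^k$ for $k \ge 1$, which follows from $AA_\infty = A_\infty A = A_\infty$ (using $A\one = \one$ and $\pi_A^\top A = \pi_A^\top$) together with $A_\infty^2 = A_\infty$. Passing through the similarity transformation $\tilde A := \Pi_A^{1/2}(A - A_\infty)\Pi_A^{-1/2}$, whose induced $\ell_2$-norm is exactly $\beta_A$, I obtain the geometric bound $\norm{A^k - A_\infty}_2 \le \norm{\Pi_A^{-1/2}}_2\, \beta_A^k\, \norm{\Pi_A^{1/2}}_2 = \sqrt{\kappa_A}\, \beta_A^k$ for $k \ge 1$. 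Combining this with the uniform bound $\norm{A^k - A_\infty}_2 \le M_A$ and splitting the sum at the cutoff $k^\star := \lceil \ln(\sqrt{\kappa_A}/M_A)/\ln(1/\beta_A)\rceil$ gives $\sum_{k \ge 0} \norm{A^k - A_\infty}_2 \le M_A k^\star + \sqrt{\kappa_A}\,\beta_A^{k^\star}/(1-\beta_A)$. The elementary inequality $\ln(1/\beta_A) \ge 1 - \beta_A$ bounds $k^\star$, producing $M_A(1 + \tfrac{1}{2}\ln\kappa_A)/(1-\beta_A)$. Finally, since $I - A_\infty$ is a nontrivial projection (as $(I-A_\infty)^2 = I - A_\infty$), its operator norm---and hence $M_A$---is at least $1$, so $\sum_k \norm{A^k - A_\infty}_2^2 \le M_A \sum_k \norm{A^k - A_\infty}_2$ is controlled by the same expression with $M_A^2$ in place of $M_A$; taking the maximum gives the stated bound on $s_A$. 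The column-stochastic case for $B$ goes through identically via $\Pi_B^{1/2}(B - B_\infty)\Pi_B^{-1/2}$ and $BB_\infty = B_\infty B = B_\infty$.

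The main technical obstacle is pinning down the logarithmic factor $\tfrac{1}{2}\ln\kappa_A$: a naive estimate relying solely on the geometric bound would contribute a multiplicative $\sqrt{\kappa_A}$, which is exponentially worse when the Perron vector is far from uniform. The cutoff argument that uses $M_A$ on the burn-in portion of the sum is precisely what trades this polynomial factor for a logarithm, and verifying $M_A \ge 1$ via the projection identity is what allows the single expression $M_A^2(1+\tfrac12 \ln \kappa_A)/(1-\beta_A)$ to dominate both the $\ell_1$ and $\ell_2$ pieces in the definition of $s_A$.
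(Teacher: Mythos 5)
Your proposal is correct and follows essentially the same route as the paper: the rolling-sum inequality \eqref{eq:rolling sum A} via a weighted Cauchy--Schwarz/Jensen step with weights proportional to $\norm{A^{k-i}-A_\infty}_2$ followed by an exchange of summation order, and the bound on $s_A$ via the similarity transform $\Pi_A^{1/2}(A-A_\infty)\Pi_A^{-1/2}$ giving $\norm{A^k-A_\infty}_2\le\sqrt{\kappa_A}\,\beta_A^k$, a cutoff splitting the sum at $k^\star\approx\ln(\sqrt{\kappa_A}/M_A)/\ln(1/\beta_A)$ (the paper's $p$), and $\ln(1/\beta_A)\ge 1-\beta_A$ together with $M_A\ge 1$. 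Your explicit justification that $M_A\ge 1$ via the projection identity $(I-A_\infty)^2=I-A_\infty$ is a small but welcome addition that the paper handles by fiat in its definition of $M_A$.
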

\begin{proof}
First, we prove that
\begin{align}\label{eq: rolling sum lemma-1}
    \norm{A^i-A_\infty}_2\le \sqrt{\kappa_A}\beta_A^{i}, \forall i \ge 0.
 \end{align}
Notice that $\beta_A:=\norm{A-A_\infty}_{\pi_A}$ and \[\norm{A^i-A_\infty}_{\pi_A}= \norm{(A-A_\infty)^i}_{\pi_A}\le \norm{A-A_\infty}^i_{\pi_A}=\beta_A^i,\]
we have
\[
\norm{(A^{i}-A_\infty)v}=\norm{\Pi_A^{-1/2}(A^i-A_\infty)v}_{\pi_A}\le \sqrt{\underline{\pi_A}}\beta_A^i\norm{v}_{\pi_A}\le \sqrt{\kappa_A}\beta_A^i\norm{v},
\]
where $\underline{\pi_A}:=\min \pi_A$, $\overline{\pi_A}:=\max \pi_A$.
The last inequality comes from $\norm{v}_{\pi_A}\le \overline{\pi_A} \norm{v}$. Therefore, \eqref{eq: rolling sum lemma-1} holds.

Second, we want to prove that
\begin{align}\label{eq: rolling sum lemma-2}
    s_A\le \frac{M_A^2(1+\frac{1}{2}\ln(\kappa_A))}{1-\beta_A}.
\end{align}
Towards this end, we define $M_A:=\max\{1, \max_{k\ge 0}\norm{A^k-A_\infty}_2 \}$. According to \eqref{eq: rolling sum lemma-1}, $M_A$ is well-defined. We also define $p=\max\left\{\frac{\ln(\sqrt{\kappa_A})-\ln(M_A)}{-\ln(\beta_A)}, 0\right\}$,
then we can verify that $\norm{A^i-A_\infty}_2\le \min\{M_A, M_A\beta_A^{i-p}\},\forall i\ge 0.$ With this inequality, we can bound $\sum_{i=0}^k\norm{A^{k+1-i}-A_\infty}_2$ as follows:
\begin{align}\label{eq: rolling sum lemma-3}
    &\sum_{i=0}^k\norm{A^{k-i}-A_\infty}_2=\sum_{i=0}^{\min\{\lfloor p \rfloor, k\}}\norm{A^i-A_\infty}_2+ \sum_{i=\min\{\lfloor p \rfloor, k\}+1}^{k}\norm{A^{i}-A_\infty}_2\\
    \le &\sum_{i=0}^{\min\{\lfloor p \rfloor, k\}} M_A+\sum_{i=\min\{\lfloor p \rfloor, k\}+1}^{k} M_A\beta_A^{i-p}\nonumber\\
    \le & M_A\cdot (1+\min\{\lfloor p \rfloor, k\})+M_A\cdot \frac{1}{1-\beta_A}\beta_A^{\min\{\lfloor p \rfloor, k\}+1-p}.\nonumber
\end{align}
If $p=0$, \eqref{eq: rolling sum lemma-3} is simplified to $\sum_{i=0}^k\norm{A^{k-i}-A_\infty}_2\le M_A\cdot\frac{1}{1-\beta_A}$ and \eqref{eq: rolling sum lemma-2} is naturally satisfied.
If $p>0$, let $x=\min\{\lfloor p \rfloor, k\}+1-p\in [0,1)$, \eqref{eq: rolling sum lemma-2} is simplified to
\[
\sum_{i=0}^k\norm{A^{k-i}-A_\infty}_2\le M_A\!\left(x+p+\frac{\beta_A^x}{1-\beta_A}\right)\le M_A\!\left(p+\frac{1}{1-\beta_A}\right)=\frac{M_A(1+\frac12 \ln(\kappa_A))}{1-\beta_A}.
\]
By the definition of $p$, we have $p\le \frac{\frac{1}{2}\ln(\kappa_A)}{1-\beta_A}$. This completes the estimation of $\sum_{i=0}^{\infty}\norm{A^{k-i}-A_\infty}_2$. For the summation of the squared norm, we can use the same $p$ and obtain the upper bound without modifying the previous steps:
\[
\sum_{i=0}^k\norm{A^{k-i}-A_\infty}_2^2\le M_A^2\!\left(p+\frac{1}{1-\beta_A}\right)=\frac{M_A^2(1+\frac12 \ln(\kappa_A))}{1-\beta_A}.
\]
This finishes the proof of \eqref{eq: rolling sum lemma-2}.

Finally, to obtain \eqref{eq:rolling sum A}, we use Jensen's inequality. For any positive numbers $a_0,\ldots,a_k$ satisfying $\sum_{i=0}^{k} a_i=1$, we have
\begin{align}\label{eq: rolling sum lemma-4}
     &\norm{\sum_{i=0}^k(A^{k-i}-A_\infty)\Delta^{(i)}}_F^2=\norm{\sum_{i=0}^k a_{k-i}\cdot a_{k-i}^{-1}(A^{k-i}-A_\infty)\Delta^{(i)}}_F^2\\
    \le& \sum_{i=0}^k a_{k-i}\norm{a_{k-i}^{-1}(A^{k-i}-A_\infty)\Delta^{(i)}}_F^2\le \sum_{i=0}^k a_{k-i}^{-1}\norm{A^{k-i}-A_\infty}_2^2\norm{\Delta^{(i)}}_F^2.\nonumber
\end{align}
By choosing $a_{k-i}=(\sum_{i=0}^k\norm{A^{k-i}-A_\infty}_2)^{-1}\norm{A^{k-i}-A_\infty}_2$ in \eqref{eq: rolling sum lemma-4}, we obtain that
\begin{align}\label{eq: rolling sum lemma-5}
    \norm{\sum_{i=0}^k(A^{k-i}-A_\infty)\Delta^{(i)}}_F^2\le \sum_{i=0}^k\norm{A^{k-i}-A_\infty}_2\cdot\sum_{i=0}^k\norm{A^{k-i}-A_\infty}_2\norm{\Delta^{(i)}}_F^2.
\end{align}
By summing up \eqref{eq: rolling sum lemma-5} from $k=0$ to $T$, we obtain that
\begin{align*}
    &\sum_{k=0}^T\norm{\sum_{i=0}^k(A^{k-i}-A_\infty)\Delta^{(i)}}_F^2 \le s_A\sum_{k=0}^T\sum_{i=0}^k\norm{A^{k-i}-A_\infty}_2\norm{\Delta^{(i)}}_F^2\\
    \le& s_A \sum_{i=0}^T\!\left(\sum_{k=i}^T\norm{A^{k-i}-A_\infty}_2\right)\norm{\Delta^{(i)}}_F^2\le s_A^2\sum_{i=0}^T\norm{\Delta^{(i)}}_F^2,
\end{align*}
which finishes the proof of this lemma.
\end{proof}

\begin{lemma}\label{lem:noise separation}
Given $l\ge 0$ and matrices $M^{(0)}, M^{(1)},\ldots, M^{(l)}\in \mathbb{R}^{n\times n}$. For any $k\ge 0$, If $H=\sum_{i=0}^{l} M^{(i)}\vg^{(k+i)}$ is a linear combination of stochastic gradients, then 
\begin{align*}
    \EE_k[\norm{H}_F^2]\le n\sigma^2 \sum_{i=0}^l\norm{M^{(i)}}_2^2+\EE_k[\norm{\sum_{i=0}^{l} M^{(i)}\nabla f(\vx^{(k+i)})}_F^2].
\end{align*}
\end{lemma}
\begin{proof}
    This is a direct corollary from the unbiasedness and linearly independent gradients in Assumption~\ref{ass:gradient oracle}.
\end{proof}

\section{Proofs of Main Theorem}\label{app:sec:proofs}

\subsection{Proof of Lemma~\ref{lem:m-descent}}\label{app:subsec:lem 4.3}
For simplicity, we define $\Delta_y^{(k)}:=(I-B_\infty)\vy^{(k)}$ and $\overline{\nabla f}^{(k)}:=n^{-1}\one^\top \nabla f (\vx^{(k)})$. According to Assumption~\ref{ass:gradient oracle}, we have
\[\EE_k[\bar{g}^{(k+i)}]=\EE_k[\EE_{k+i}[\bar{g}^{(k+i)}]]=\EE_k[\overline{\nabla f}^{(k+i)}],\quad \forall i\ge 0.\]

Note that 
\[\pi_A^\top\sum_{i=0}^{m-1} \vy^{(k+i)} = c\sum_{i=0}^{m-1}\bar{g}^{(k+i)}+\pi_A^\top \sum_{i=0}^{m-1}\Delta_y^{(k+i)},\]
the descent lemma on $\hat{x}^{(k+m)}=\hat{x}^{(k)}-\alpha \pi_A^\top\sum_{i=k}^{k+m-1}\vy^{(i)}$ can be written as follows:
\begin{align*}
    f(\hat{x}^{(k+m)})- f(\hat{x}^{(k)})&\le -c\alpha \ip{\sum_{i=0}^{m-1}\bar{g}^{(k+i)},\nabla f(\hat{x}^{(k)})}-\alpha \ip{\pi_A^\top \sum_{i=0}^{m-1}\Delta_y^{(k+i)},\nabla f(\hat{x}^{(k)})}\\
    &\quad+c^2\alpha^2L \norm{\sum_{i=0}^{m-1}\bar{g}^{(k+i)}}^2+\alpha^2L\norm{\pi_A^\top\sum_{i=0}^{m-1}\Delta_y^{(k+i)}}^2.
\end{align*}
Take expectation conditioned on $\cF_k$ and we obtain that
\begin{align}\label{eq:des-1}
    &\EE_k[f(\hat{x}^{(k+m)})]-f(\hat{x}^{(k)})\le -c\alpha \sum_{i=0}^{m-1}\EE_k\left[\ip{\overline{\nabla f}^{(k+i)},\nabla f(\hat{x}^{(k)})}\right]\\
    &\quad-\alpha \EE_k\left[\ip{\pi_A^\top \sum_{i=0}^{m-1}\Delta_y^{(k+i)},\nabla f(\hat{x}^{(k)})}\right]+c^2\alpha^2L \EE_k[\norm{\sum_{i=0}^{m-1}\bar{g}^{(k+i)}}^2]\nonumber\\
    &\quad+\alpha^2L\EE_k[\norm{\pi_A^\top\sum_{i=0}^{m-1}\Delta_y^{(k+i)}}^2].\nonumber
\end{align}

The first inner product term in \eqref{eq:des-1} can be bounded by:
\begin{align*}
    -\ip{\overline{\nabla f}^{(k+i)},\nabla f(\hat{x}^{(k)})} &=  -\frac{1}{2}\norm{\overline{\nabla f}^{(k+i)}}^2-\frac{1}{2}\norm{\nabla f(\hat{x}^{(k)})}^2+\frac{1}{2}\norm{\overline{\nabla f}^{(k+i)}-\nabla f(\hat{x}^{(k)})}^2\\
    &\le -\frac{1}{2}\norm{\overline{\nabla f}^{(k+i)}}^2-\frac{1}{2}\norm{\nabla f(\hat{x}^{(k)})}^2+\frac{L^2}{2}\norm{\vx^{(k+i)}-\hat{\vx}^{(k)}}_F^2,
\end{align*}
where the last inequality comes from Cauchy-Schwarz inequality and $L$-smoothness. The first quartic term in \eqref{eq:des-1} can be bounded by: 
\[\EE_k[\norm{\sum_{i=0}^{m-1}\bar{g}^{(k+i)}}^2]\le  n^{-1}m\sigma^2+\EE_k[\norm{\sum_{i=0}^{m-1}\overline{\nabla f}^{(k+i)}}^2],\]
where we use the fact that $\bar{g}^{(k+i)}$ are conditionally unbiased estimates and
\[\EE_k\left[\ip{\bar{g}^{(k+i)}-\overline{\nabla f}^{(k+i)},\bar{g}^{(k+j)}-\overline{\nabla f}^{(k+j)}}\right]=0.\]

Plug the estimation for the inner product into \eqref{eq:des-1}, we obtain that 
\begin{align}\label{eq:des-2}
    &\frac{\alpha cm}{2}\EE_k[\norm{\nabla f(\hat{x}^{(k)})}^2]+\frac{\alpha c}{2}\sum_{i=0}^{m-1}\EE_k[\norm{\overline{\nabla f}^{(k+i)}}^2]\le \EE_k[f(\hat{x}^{(k)})-f(\hat{x}^{(k+m)})]\\
    &+\frac{c\alpha L^2}{2}\sum_{i=0}^{m-1}\EE_k[\norm{\vx^{(k+i)}-\hat{\vx}^{(k)}}_F^2]+\frac{\alpha^2c^2mL}{n}\sigma^2+c^2\alpha^2L\EE_k[\norm{\sum_{i=0}^{m-1}\overline{\nabla f}^{(k+i)}}^2]\nonumber\\
    &-\alpha \EE\left[\ip{\pi_A^\top \sum_{i=0}^{m-1}\Delta_y^{(k+i)},\nabla f(\hat{x}^{(k)})}\right]+\alpha^2L\EE_k[\norm{\pi_A^\top\sum_{i=0}^{m-1}\Delta_y^{(k+i)}}^2]\nonumber.
\end{align}

Next, we try to simplify the last quadratic term in \eqref{eq:des-2} . Notice that 
\[\sum_{i=0}^{m-1}\Delta_y^{(k+i)}=\left(\sum_{j=0}^{m-1}(B^j-B_\infty)\right)\Delta_y^{(k)}+\sum_{j=1}^{m-1}(B^{m-j-1}-B_\infty)(\vg^{(k+j)}-\vg^{(k)}),\]
the quadratic term can be bounded as:
\begin{align}\label{eq:quadratic-1}
    &\EE_k[\norm{\pi_A^\top \sum_{i=0}^{m-1}\Delta_y^{(k+i)}}^2]\le \EE_k[\norm{ \sum_{i=0}^{m-1}\Delta_y^{(k+i)}}_F^2]\\
    &\le 2 \norm{\sum_{j=0}^{m-1}(B^j-B_\infty)}_2^2\EE_k[\norm{\Delta_y^{(k)}}_F^2]+2\EE_k[\norm{\sum_{j=1}^{m-1}(B^{m-j-1}-B_\infty)(\vg^{(k+j)}-\vg^{(k)})}_F^2]\nonumber.
\end{align}
The first term here can be estimated using the fact that \(\norm{\sum_{j=0}^{m-1}(B^j-B_\infty)}_2^2\le s_B^2\). For the second term in \eqref{eq:quadratic-1}, according to Assumption~\ref{ass:gradient oracle}, we have
\begin{align}
    &\quad\EE_k[\norm{\sum_{j=1}^{m-1}(B^{m-j-1}-B_\infty)(\vg^{(k+j)}-\vg^{(k)})}_F^2]\\
    &= \sum_{j=1}^{m-1}\EE_k[\norm{(B^{m-j-1}-B_\infty)(\vg^{(k+j)}-\nabla f(\vx^{(k+j)}))}_F^2] \nonumber\\
    &\quad+\EE_k[\norm{(\sum_{j=1}^{m-1}(B^{m-j-1}-B_\infty))(\vg^{(k)}-\nabla f(\vx^{(k)}))}_F^2]\nonumber\\
    &\quad+\EE_k[\norm{\sum_{j=1}^{m-1}(B^{m-j-1}-B_\infty)(\nabla f(\vx^{(k+j)})-\nabla f(\vx^{(k)}))}_F^2]\nonumber\\
    &\le n\sigma^2\sum_{j=1}^{m-1}\norm{B^{m-j-1}-B_\infty}_2^2+n\sigma^2\norm{\sum_{j=1}^{m-1}(B^{m-j-1}-B_\infty)}_2^2\nonumber\\
    &\quad+\EE_k[\norm{\sum_{j=1}^{m-1}(B^{m-j-1}-B_\infty)(\nabla f(\vx^{(k+j)})-\nabla f(\vx^{(k)}))}_F^2]\nonumber\\
    &\le 2ns_B^2\sigma^2+\EE_k[\norm{\sum_{j=1}^{m-1}(B^{m-j-1}-B_\infty)(\nabla f(\vx^{(k+j)})-\nabla f(\vx^{(k)}))}_F^2]\nonumber
\end{align}

Similarly, using the fact that $\hat{x}^{(k)}$ is known under filtration $\cF_k$, we have
\begin{align}\label{eq:ip-1}
    &\quad-\alpha \EE_k\left[\ip{\pi_A^\top \sum_{i=0}^{m-1}\Delta_y^{(k+i)},\nabla f(\hat{x}^{(k)})}\right]\\
    &=-\alpha \ip{\EE_k[\pi_A^\top\sum_{j=1}^{m-1}(B^{m-j-1}-B_\infty)(\nabla f(\vx^{(k+j)})-\nabla f(\vx^{(k)}))],\nabla f(\hat{x}^{(k)})}\nonumber\\
    &\quad+\alpha\ip{\pi_A^\top \left(\sum_{i=0}^{m-1}(B^{m-i-1}-B_\infty)\right)\Delta_y^{(k)},\nabla f(\hat{x}^{(k)})}\nonumber\\
    &\overset{\text{Cauchy}}{\le} \frac{2\alpha}{cm}\EE_k[\norm{\pi_A^\top \sum_{j=1}^{m-1}(B^{m-j-1}-B_\infty)(\nabla f(\vx^{(k+j)})-\nabla f(\vx^{(k)}))}^2]\\
    &\quad + \frac{\alpha cm}{8}\norm{\nabla f(\hat{x}^{(k)})}^2\nonumber +\frac{2\alpha}{cm}\norm{\pi_A^\top\left(\sum_{i=0}^{m-1}(B^{m-i-1}-B_\infty)\right)\Delta_y^{(k)}}_F^2  +\frac{\alpha cm}{8}\norm{\nabla f(\hat{x}^{(k)})}^2\nonumber\\
    &\le \frac{2\alpha}{cm}\EE_k[\norm{\sum_{j=1}^{m-1}(B^{m-j-1}-B_\infty)(\nabla f(\vx^{(k+j)})-\nabla f(\vx^{(k)}))}_F^2]+ \frac{\alpha cm}{4}\norm{\nabla f(\hat{x}^{(k)})}^2\nonumber\\
    &\quad +\frac{2\alpha s_B^2}{cm}\norm{\Delta_y^{(k)}}_F^2 \nonumber,
\end{align}
where the last inequality uses $\norm{\pi_A}\le 1$ and $\norm{\sum_{j=1}^{m-1}(B^{m-j-1}-B_\infty)}_2\le s_B$.
Plug \eqref{eq:quadratic-1} and \eqref{eq:ip-1} into \eqref{eq:des-2}, we obtain that
\begin{align}\label{eq:des-3}
    &\quad\frac{\alpha cm}{4}\EE_k[\norm{\nabla f(\hat{x}^{(k)})}^2]+\frac{\alpha c}{2}\sum_{i=0}^{m-1}\EE_k[\norm{\overline{\nabla f}^{(k+i)}}^2]-c^2\alpha^2L\EE_k[\norm{\sum_{i=0}^{m-1}\overline{\nabla f}^{(k+i)}}^2]\\
    &\le \EE_k[f(\hat{x}^{(k)})-f(\hat{x}^{(k+m)})]+\frac{c\alpha L^2}{2}\sum_{i=0}^{m-1}\EE[\norm{\vx^{(k+i)}-\hat{\vx}^{(k)}}_F^2]\nonumber\\
    &+\frac{2\alpha^2c^2mL}{n}\sigma^2+2\alpha^2Lns_B^2\sigma^2+(2\alpha^2Ls_B^2+\frac{2\alpha s_B^2}{cm})\EE[\norm{\Delta_y^{(k)}}_F^2]\nonumber\\
    &+(2\alpha^2L+\frac{2\alpha}{cm })\EE[\norm{\sum_{j=1}^{m-1}(B^{m-j-1}-B_\infty)(\nabla f(\vx^{(k+j)})-\nabla f(\vx^{(k)}))}_F^2]\nonumber
\end{align}

For \(c^2\alpha^2L\EE[\norm{\sum_{i=0}^{m-1}\overline{\nabla f}^{(k+i)}}^2]\), we have the following estimate:
\begin{align}\label{eq:des-4}
    &\quad\EE[\norm{\sum_{i=0}^{m-1}\overline{\nabla f}^{(k+i)}}^2]\le m\sum_{i=0}^{m-1}\EE[\norm{\overline{\nabla f}^{(k+i)}}^2].
\end{align}

Take \eqref{eq:des-4} into \eqref{eq:des-3}, and suppose $\alpha \le \frac{1}{4cmL}$, we obtain that
\begin{align}\label{eq:des-5}
    &\quad\frac{\alpha cm}{4}\EE[\norm{\nabla f(\hat{x}^{(k)})}^2]+\frac{\alpha c}{4}\sum_{i=0}^{m-1}\EE[\norm{\overline{\nabla f}^{(k+i)}}^2]\\
    &\le \EE[f(\hat{x}^{(k)})-f(\hat{x}^{(k+m)})]+\frac{c\alpha L^2}{2}\sum_{i=0}^{m-1}\EE[\norm{\vx^{(k+i)}-\hat{\vx}^{(k)}}_F^2]\nonumber\\
    &+\frac{2\alpha^2c^2mL}{n}\sigma^2+2\alpha^2Lns_B^2\sigma^2+\frac{3\alpha s_B^2}{cm}\EE[\norm{\Delta_y^{(k)}}_F^2]\nonumber\\
    &+\frac{3\alpha}{cm}\EE[\norm{\sum_{j=1}^{m-1}(B^{m-j-1}-B_\infty)(\nabla f(\vx^{(k+j)})-\nabla f(\vx^{(k)}))}_F^2]\nonumber.
\end{align}
We can further apply Cauchy's inequality on the last term in \eqref{eq:des-5}, i.e.,
\begin{align*}
    &\quad\norm{\sum_{j=1}^{m-1}(B^{m-j-1}-B_\infty)(\nabla f(\vx^{(k+j)})-\nabla f(\vx^{(k)}))}_F^2\\
    &\le \sum_{j=1}^{m-1}\norm{B^{m-j-1}-B_\infty}_2^2\sum_{j=1}^{m-1}\norm{\nabla f(\vx^{(k+j)})-\nabla f(\vx^{(k)})}_F^2\\
    &\le s_B\sum_{j=1}^{m-1}\norm{\nabla f(\vx^{(k+j)})-\nabla f(\vx^{(k)})}_F^2\le s_BL^2 \sum_{j=1}^{m-1}\norm{\vx^{(k+j)}-\vx^{(k)}}_F^2.
\end{align*}
Finally, when $m\ge 6c^{-2}s_B$, we have $\frac{3\alpha s_B L^2}{cm}\le \frac{c\alpha L^2}{2}$, and \eqref{eq:des-5} can be simplified to 
\begin{align*}
    &\quad\frac{\alpha cm}{4}\EE[\norm{\nabla f(\hat{x}^{(k)})}^2]+\frac{\alpha c}{4}\sum_{i=0}^{m-1}\EE[\norm{\overline{\nabla f}^{(k+i)}}^2]\\
    &\le \EE[f(\hat{x}^{(k)})-f(\hat{x}^{(k+m)})]+\frac{c\alpha L^2}{2}\sum_{i=0}^{m-1}\EE[\norm{\vx^{(k+i)}-\hat{\vx}^{(k)}}_F^2]\nonumber\\
    &+\frac{2\alpha^2c^2mL}{n}\sigma^2+2\alpha^2Lns_B^2\sigma^2+\frac{3\alpha s_B^2}{cm}\EE[\norm{\Delta_y^{(k)}}_F^2]\nonumber\\
    &+\frac{c\alpha L^2}{2}\sum_{j=1}^{m-1} \EE_k[\norm{\vx^{(k+j)}-\vx^{(k)}}_F^2]\nonumber,
\end{align*}
By dividing $\frac{\alpha cm}{4}$ on both sides of this inequality, we finish the proof. 

\subsection{Proof of Lemma~\ref{lem:consensus error}}\label{app:subsec:lem 4.4}
We first provide the complete definition of the constant \revision{$\tilde{C}$} used in the statement. We define $C_m$ as the maximum of all lower bounds imposed on $m$ to ensure the validity of subsequent inequalities and simplifications: 
\begin{align}\label{eq::c_m_simplified}
C_m = \max \Big\{ 
    & n^{-1}s_A^2, 20c^{-2}ns_A s_{A^m},\;c^{-2}n^2s_B^2,  30c^{-2}s_B^2s_{B^m}\nonumber\\
    & c^{-2}n^2(8s_A^2s_B+2ns_B+2s_As_B+\sqrt{n}s_B), 20c^{-2}n^3s_A s_{A^m} s_B s_{B^m} 
\Big\}.
\end{align}
\revision{
where $s_{A^m}$ and $s_{B^m}$ represent the matrix measure of $A^m$ and $B^m$ (see \eqref{eq:s_A definition}, \eqref{eq:s_B definition} and \eqref{eq:s_B^m_def_in_appendix}) with the upper bounds
\[
s_{A^m}\le s_A,\qquad s_{B^m}\le s_B,\qquad \forall m\ge 1.
\] 
Substituting these bounds into \eqref{eq::c_m_simplified} yields an $m$-independent upper bound $\tilde{C}$:
\begin{align}\label{eq::tilde_c_m_simplified}
C_m\leq \tilde{C} := \max \Big\{ 
    & n^{-1}s_A^2, 20c^{-2}ns_A^2,\;c^{-2}n^2s_B^2,  30c^{-2}s_B^3\nonumber\\
    & c^{-2}n^2(8s_A^2s_B+2ns_B+2s_As_B+\sqrt{n}s_B), 20c^{-2}n^3s_A^2s_B^2
\Big\}.
\end{align}
}
We define $\delta_B^{(0)}=I-B_\infty$, $\delta_B^{(j)}=B^{j}-B^{j-1},j\ge 1$ and the following constants:
\begin{align*}
N_7&:=\sum_{t=0}^\infty \norm{\sum_{j=0}^t A^{t-j}\delta_B^{(j)}}_2^2 \leq 8s_A^2s_B+2n\norm{\pi_A}^2s_B\leq 8s_A^2s_B+2ns_B,\\
    N_8&:=\sum_{t=0}^\infty \norm{\sum_{j=0}^t A^{t-j}\delta_B^{(j)}}_2 \leq 2s_As_B+\sqrt{n}\norm{\pi_A}s_B\leq 2s_As_B+\sqrt{n}s_B.
\end{align*}
For all arguments used in this section, we suppose Proposition~\ref{prop:s_A,s_B} and Assumption~\ref{ass:gradient oracle},~\ref{ass:smooth} hold. We first declare that, when $\alpha \le \frac{1}{6s_BL}$, $m\ge \max\{c^{-2}n^2(N_7+N_8^2),n^{-1}s_A^2, n^{-1}s_A, c^{-2}n^2s_A^2\}$, the following inequalities hold $\forall k,m \ge 1$.
    \begin{align}
    \Delta_1^{(k)} &\le 40mn\norm{\Delta_x^{(k)}}_F^2+ 40\alpha^2ms_B\norm{\Delta_y^{(k)}}_F^2+20c^2\alpha^2 m^2n^{-1}\sigma^2\label{ieq:delta 1}\\
    &\quad+20c^2\alpha^2m^2\sum_{j=0}^{m-1}\EE_k[\norm{\overline{\nabla f}^{(k+j)}}^2],\nonumber\\
    \Delta_2^{(k)}&\le 40mn\norm{\Delta_x^{(k)}}_F^2+ 40\alpha^2ms_B\norm{\Delta_y^{(k)}}_F^2+20c^2\alpha^2 m^2n^{-1}\sigma^2\label{ieq:delta 2}\\
    &\quad+20c^2\alpha^2m^2\sum_{j=0}^{m-1}\EE_k[\norm{\overline{\nabla f}^{(k+j)}}^2],\nonumber
\end{align}
Note that
\begin{align*}
    &\quad\vx^{(k+i)}- \hat{\vx}^{(k)}- (A^i-A_\infty)\Delta_x^{(k)}=-\alpha \sum_{j=0}^{i-1}A^{i-j-1}\Delta_y^{(k+j)}-\alpha \sum_{j=0}^{i-1}A^{i-j-1}B_\infty\vy^{(k+j)}\\
    &=-\alpha \sum_{j=0}^{i-1}A^{i-j-1}(B^j-B_\infty) \Delta_y^{(k)} - \alpha \sum_{l=1}^{i-1}\sum_{j=0}^{i-l-1}A^{i-j-l-1}\delta_B^{(j)}(\vg^{(k+l)}-\vg^{(k)}) \\
    &\quad\quad-\alpha \sum_{j=0}^{i-1}(A^{i-j-1}-A_\infty)n\pi_B\bar{g}^{(k+j)}-\alpha c\sum_{j=0}^{i-1}\bar{g}^{(k+j)}.
\end{align*}


Therefore, we can apply Cauchy's inequality and sum it up from $i=1$ to $m-1$:
\begin{align*}
    &\quad\sum_{i=1}^{m-1}\EE_k[\norm{\vx^{(k+i)}-\hat{\vx}^{(k)}}_F^2]\\
    &\le 5\sum_{i=1}^{m-1}\norm{A^i-A_\infty}_2^2\norm{\Delta_x^{(k)}}_F^2+5\alpha^2 \sum_{i=1}^{m-1}\norm{\sum_{j=0}^{i-1}A^{i-j-1}(B^j-B_\infty)}_2^2\norm{\Delta_y^{(k)}}_F^2\\
    &\quad+5\alpha^2 \sum_{i=1}^{m-1}(\sum_{l=1}^{i-1}\norm{\sum_{j=0}^{i-l-1}A^{i-j-l-1}\delta_B^{(j)}}_2^2+\norm{\sum_{l=1}^{i-1}\sum_{j=0}^{i-l-1}A^{i-j-l-1}\delta_B^{(j)}}_2^2 )n\sigma^2\\
    &\quad+5\alpha^2\sum_{i=1}^{m-1}\EE_k[\norm{\sum_{j=l}^{i-1}A^{i-j-1}\delta_B^{(j-l)}(\nabla f(\vx^{(k+l)})-\nabla f(\vx^{(k)}))}_F^2]\\
    &\quad+5\alpha^2n^2s_A^2\sum_{i=1}^{m-1}\norm{\overline{\nabla f}^{(k+j)}}+5\alpha^2 n\sum_{i=1}^{m-1}\sum_{j=0}^{i-1}\norm{A^{i-j-1}-A_\infty}_2^2 \\
    &\quad+5c^2\alpha^2m^2 \sum_{j=0}^{m-1}\norm{\overline{\nabla f}^{(k+j)}}^2+5c^2\alpha^2m^2n^{-1}\sigma^2\\
    &\le 5s_A\norm{\Delta_x^{(k)}}_F^2 + 10\alpha^2(s_A^2s_B+mns_B)\norm{\Delta_y^{(k)}}_F^2+5\alpha^2 m (N_7+N_8^2+c^2mn^{-2})n\sigma^2\\
    &\quad+ 5\alpha^2 N_8^2 L^2 \Delta_2^{(k)} + 5\alpha^2(n^2s_A^2+c^2m^2)\norm{\overline{\nabla f}^{(k+j)}}^2.
\end{align*}
where in the first inequality, the $\sigma^2$ terms are separated using Lemma~\ref{lem:noise separation}, and the second inequality comes from rolling sum.
For the decomposition of $\vx^{(k+i)}-\vx^{(k)}$, the only difference is that we replace $(A^i-A_\infty)\Delta_x^{(k)}$ with $(A^i-I)\Delta_x^{(k)}$. This gives
\begin{align*}
    \quad\Delta_2^{(k)}\le &5\sum_{i=1}^{m}\norm{A^i-I}_2^2\norm{\Delta_x^{(k)}}_F^2+ 10\alpha^2(s_A^2+mn)s_B\norm{\Delta_y^{(k)}}_F^2\\
    &+5\alpha^2 m (N_7+N_8^2+c^2mn^{-2})n\sigma^2+ 5\alpha^2 N_8^2 L^2 \Delta_2^{(k)} \\
    &+ 5\alpha^2(n^2s_A^2+c^2m^2)\norm{\overline{\nabla f}^{(k+j)}}^2.
\end{align*}
Easy to prove that $\sum_{i=1}^{m}\norm{A^i-I}_2^2\le 2s_A+2mn$. When $\alpha \le \frac{1}{5N_8L}$, we have
\begin{align}\label{ieq(app):delta 2}
    \Delta_2^{(k)}&\le 20(s_A+mn)\norm{\Delta_x^{(k)}}_F^2+ 20\alpha^2(s_A^2+mn)s_B\norm{\Delta_y^{(k)}}_F^2\\
    &\quad+10\alpha^2 m (N_7+N_8^2+c^2mn^{-2})n\sigma^2+10\alpha^2(n^2s_A^2+c^2m^2)\norm{\overline{\nabla f}^{(k+j)}}^2.\nonumber
\end{align}
Plug \eqref{ieq(app):delta 2} into the estimate for $\Delta_1^{(k)}$, it is not difficult to verify that $\Delta_1^{(k)}$ could use the same upper bound as \eqref{ieq(app):delta 2}. Next, we provide some inequalities for the consensus error $\Delta_x^{(k)}$ and $\Delta_y^{(k)}$, $k\in[m^*]$. For $k\in [m^*]$, we have     
\begin{align}\label{eq:rolling y}
    \Delta_y^{(k+m)}=(B^m-B_\infty)\Delta_y^{(k)}+\sum_{i=0}^{m-1}(B^{m-1-i}-B_\infty)(\vg^{(k+i+1)}-\vg^{(k+i)})
\end{align}
We define 
\(
\Delta_3^{(k)}=\sum_{i=0}^{m-1}(B^{m-1-i}-B_\infty)(\vg^{(k+i+1)}-\vg^{(k+i)})\), which can be written as 
\begin{align*}
    \Delta_3^{(k)}&=(I-B_\infty)\vg^{(k+m)}+(B-I)\vg^{(k+m-1}+(B^2-B)\vg^{(k+m-2)}\\
    &\quad+\ldots+(B^{m-1}-B^{m-2})\vg^{(k+1)}-(B^{m-1}-B_\infty)\vg^{(k)},
\end{align*}
according to Lemma~\ref{lem:noise separation}, we have \(\EE_k[\norm{\Delta_3^{(k)}}_F^2]\le n\sigma^2 q_B+ h_{m,k}\),
where
\begin{align*}
    h_{m,k}&:=\sum_{j=1}^m\delta_B^{(m-j)}\nabla f(\vx^{(k+j)})-(B^{m-1}-B_\infty)\nabla f(\vx^{(k)}),\\
    q_B&=\sum_{j=1}^m\norm{\delta_B^{(m-j)}}_2^2+\norm{B^{m-1}-B_\infty}_2^2\le 2s_B.
\end{align*}
For $h_{m,k}$, we can rearrange it as
\begin{align*}
    h_{m,k}&=\sum_{j=1}^m\delta_B^{(m-j)}(\nabla f(\vx^{(k+j)})-\nabla f(\vx^{(k)})).
\end{align*}
By applying the Jensen's inequality, we have
\begin{align*}
    \EE_k[\norm{h_{m,k}}_F^2]&\le \sum_{t=0}^{m}\norm{\delta_B^{(t)}}_2^2 \sum_{j=1}^m\EE_k[\norm{\nabla f(\vx^{(k+j)})-\nabla f(\vx^{(k)})}_F^2]\\
    &\le 2s_B L^2\sum_{j=1}^m \EE_k[\norm{\vx^{(k+j)}-\vx^{(k)}}_F^2]   = 2s_BL^2\Delta_2^{(k)}.
\end{align*}
Combining the estimates above, we reach that 
\[\EE_k[\norm{\Delta_3^{(k)}}_F^2]\le 2ns_B\sigma^2 +2s_BL^2\Delta_2^{(k)}. \]
To proceed on, we iterate \eqref{eq:rolling y} and obtain that
\begin{align*}
    \Delta_y^{(k+m)} = (B^{m+k}-B_\infty)\vg^{(0)}+ \sum_{j=0}^{k/m}(B^{k-jm}-B_\infty)\Delta_3^{(jm)}.
\end{align*}
Using the rolling sum lemma, we have
\begin{align*}
    &\sum_{k\in[m^*]}\EE[\norm{\Delta_y^{(k+m)}}_F^2] \le 2\sum_{k\in [m^*]} \EE[\norm{(B^k-B_\infty)\vg^{(0)}}_F^2]+ 2s_{B^m}^2\sum_{k\in[m^*]}\EE[\EE_k[\norm{\Delta_3^{(k)}}_F^2]]\\
    &\quad\le s_{B^m}(n\sigma^2+2nL\Delta)+4ns_Bs_{B^m}^2K\sigma^2+4s_Bs_{B^m}^2L^2\sum_{k\in[m^*]}\EE[\Delta_2^{(k)}]. 
\end{align*}
Note that 
\[\EE[\norm{\Delta_y^{(0)}}_F^2] = \EE[\norm{(I-B_\infty)\vg^{(0)}}_F^2]\le 2(n\sigma^2+2nL\Delta),\]
we combine coefficients and reach that 
\begin{align*}
    \sum_{k\in[m^*]}\norm{\Delta_y^{(k)}}_F^2 &\le 7s_Bs_{B^m}^2Kn\sigma^2+6s_{B^m}nL\Delta +4s_Bs_{B^m}^2L^2\sum_{k\in[m^*]}\Delta_2^{(k)}.
\end{align*}
Replace the $\Delta_2$ term with the estimate in \eqref{ieq:delta 2}, we reach that
\begin{align*}
    &(1-160\alpha^2mL^2 s_B^2s_{B^m}^2)\sum_{k\in[m^*]}\EE[\norm{\Delta_y^{(k)}}_F^2] \\
    \le& (7s_Bs_{B^m}^2+80c^2m^2n^{-1}\alpha^2L^2s_Bs_{B^m}^2)Kn\sigma^2+6s_{B^m}nL\Delta \\&+160s_Bs_{B^m}^2L^2mn\sum_{k\in[m^*]}\EE[\norm{\Delta_x^{(k)}}_F^2]+80c^2m^2s_Bs_{B^m}^2\alpha^2L^2\sum_{t=0}^{T-1}\EE[\norm{\overline{\nabla f}^{(t)}}^2]
\end{align*}
When $\alpha \le \frac{1}{10cm\sqrt{s_Bs_{B^m}^2}L}$ and $m\ge 10c^{-2}s_Bs_{B^m}$, we have $1-160\alpha^2mL^2 s_B^2s_{B^m}^2\ge 0.5$. and the coefficient of $Kn\sigma^2$ becomes smaller than $8s_Bs_{B^m}^2$. Thus,
\begin{align}\label{ieq(app):delta y-1}
    &\sum_{k\in[m^*]}\EE[\norm{\Delta_y^{(k)}}_F^2]\le 16s_Bs_{B^m}^2Kn\sigma^2+12s_{B^m}nL\Delta \\
    &\quad+320s_Bs_{B^m}^2L^2mn\sum_{k\in[m^*]}\EE[\norm{\Delta_x^{(k)}}_F^2]+160c^2m^2s_Bs_{B^m}^2\alpha^2L^2\sum_{t=0}^{T-1}\EE[\norm{\overline{\nabla f}^{(t)}}^2].\nonumber
\end{align}
By taking this back to \eqref{ieq:delta 2}, we have
\begin{align}\label{ieq:delta 2-1}
    \sum_{k\in[m^*]}\EE[\Delta_2^{(k)}] &\le 40mn(1+320\alpha^2L^2 ms_B^2s_{B^m}^2)\sum_{k\in[m^*]}\EE[\norm{\Delta_x^{(k)}}_F^2] \\
    &\quad+ 480\alpha^2ms_{B^m}nL\Delta + (640 ms_B^2s_{B^m}^2 +20c^2m^2n^{-2})\alpha^2Kn\sigma^2\nonumber\\
    &\quad+20c^2\alpha^2m^2(1+160\alpha^2 L^2ms_B^2s_{B^m}^2)\sum_{t=0}^{T-1}\EE[\norm{\overline{\nabla f}^{(t)}}^2].\nonumber\\
    &\le 50mn\sum_{k\in[m^*]}\EE[\norm{\Delta_x^{(k)}}_F^2]+480\alpha^2ms_{B^m}nL\Delta\nonumber\\
    &\quad+ 21c^2m^2n^{-1}\alpha^2K\sigma^2+21c^2\alpha^2m^2\EE[\norm{\overline{\nabla f}^{(t)}}^2].\nonumber
\end{align}
where the second inequality uses $\alpha \le \frac{1}{10cmL}$ and $m\ge 4c^{-2}s_B^2s_{B^m}^2n^2$.

For $k\in[m^*]$, we have
\begin{align}\label{eq:rolling x}
    \Delta_x^{(k+m)} = (A^m-A_\infty)\Delta_x^{(k)}-\alpha \sum_{i=0}^{m-1}(A^{m-i}-A_\infty)\vy^{(k+i)}.
\end{align}
For $k\in [m^*]$, define 
\(\Delta_4^{(k)}:=\sum_{i=0}^{m-1}(A^{m-i}-A_\infty)\vy^{(k+i)}\), 
then we can iterate \eqref{eq:rolling x}:
\[\Delta_x^{(k+m)} =-\alpha \sum_{j=0}^{k/m}(A^{k-jm}-A_\infty)\Delta_4^{(jm)},\quad k\in[m^*].\]
Using the rolling sum lemma, we know that for $k\in[m^*]$, 
\[\quad\sum_{k\in[m^*]}\EE[\norm{\Delta_x^{(k+m)}}_F^2]\le \alpha^2s_{A^m}^2\sum_{k\in[m^*]}\EE[\EE_k[\norm{\Delta_4^{(k)}}_F^2]].\]
To further analyze $\EE_k[\Delta_4^{(k)}]$, $k\in [m^*]$, we expand it as
\begin{align*}
    \Delta_4^{(k)}&= \sum_{i=0}^{m-1}(A^{m-i}-A_\infty)\Delta_y^{(k+i)}+ n \sum_{i=0}^{m-1}(A^{m-i}-A_\infty)\pi_B\bar{g}^{(k+i)}\\
    &=\sum_{i=0}^{m-1}(A^{m-i}-A_\infty)(B^i-B_\infty)\Delta_y^{(k)} + n \sum_{i=0}^{m-1}(A^{m-i}-A_\infty)\pi_B\bar{g}^{(k+i)}\\
    &\quad + \sum_{j=i}^{m-1}(A^{m-i}-A_\infty)\delta_B^{(j-i)}(\vg^{(k+j)}-\vg^{(k)}).
\end{align*}
Note that 
\begin{align*}
    \norm{\sum_{i=0}^{m-1}(A^{m-i}-A_\infty)(B^i-B_\infty)}_2^2 &\le s_As_B, \,\,
    \sum_{i=0}^{m-1}\norm{(A^{m-i}-A_\infty)\delta_B^{(j-i)}}_2\le 2s_As_B,\\
    \sum_{i=0}^{m-1}\norm{(A^{m-i}-A_\infty)\delta_B^{(j-i)}}_2^2&\le 2s_As_B,\,\,
    \norm{\sum_{i=0}^{m-1}(A^{m-i}-A_\infty)\delta_B^{(j-i)}}_2^2\le 2s_As_B.
\end{align*}
Therefore, according to Lemma~\ref{lem:noise separation}, we can separate noiseless and noisy terms in $\sum_{i=0}^{m-1}\pi_B\bar{g}^{(k+i)}$ and $\sum_{j=i}^{m-1}(A^{m-i}-A_\infty)(B^{j-i}-B^{j-i+1})(\vg^{(k+j)}-\vg^{(k)})$, then apply Jensen's inequality. This gives
\begin{align*}
    &\EE_k[\norm{\Delta_4^{(k)}}_F^2]\le 3s_As_B\norm{\Delta_y^{(k)}}_F^2+3n^2s_A\sum_{i=0}^{m-1}\EE_k[\norm{\overline{\nabla f}^{(k+i)}}_F^2]+3ns_A\sigma^2\\
    &\quad+6s_As_B\sum_{j=i}^{m-1}\EE_k[\norm{\nabla f(\vx^{(k+j)})-\nabla f(\vx^{(k)})}_F^2]+12s_As_Bn\sigma^2\\
    &\le 3s_As_B\norm{\Delta_y^{(k)}}_F^2+3n^2s_A\sum_{i=0}^{m-1}\EE_k[\norm{\overline{\nabla f}^{(k+i)}}_F^2]+6s_As_BL^2\Delta_2^{(k)}+15s_As_Bn\sigma^2.
\end{align*}
Plug in our estimate on $\Delta_4^{(k)}$, we have
\begin{align*}
    &\quad\sum_{k\in[m^*]}\EE[\norm{\Delta_x^{(k+m)}}_F^2]\le \alpha^2s_{A^m}^2\sum_{k\in[m^*]}\EE[\EE_k[\norm{\Delta_4^{(k)}}_F^2]]\\
    &\le 15\alpha^2ns_As_Bs_{A^m}^2K\sigma^2 + 3\alpha^2n^2s_As_{A^m}^2\sum_{t=0}^{T-1}\EE[\norm{\overline{\nabla f}^{(t)}}_F^2]\\
    &\quad+ 6\alpha^2L^2s_As_Bs_{A^m}^2\sum_{k\in[m^*]}\EE[\Delta_2^{(k)}]+ 3\alpha^2s_As_Bs_{A^m}^2 \sum_{k\in[m^*]}\EE\norm{\Delta_y^{(k)}}_F^2
\end{align*}
Note that we start from consensual $x$, so $\Delta_x^{(0)}=0$. Plug in \eqref{ieq:delta 2-1} and \eqref{ieq(app):delta y-1} we have
\begin{align*}
    &\quad(1-480\alpha^2mL^2s_As_Bs_{A^m}^2n-960\alpha^2L^2ms_As_B^2s_{A^m}^2s_{B^m}^2n)\sum_{k\in[m^*]}\EE[\norm{\Delta_x^{(k)}}_F^2]\\
    &\le (15+126c^2\alpha^2L^2m^2n^{-2}+48s_Bs_{B^m}^2)\alpha^2s_As_Bs_{A^m}^2nK\sigma^2 \\
    &\quad+ (3+(126+480s_Bs_{B^m}^2)c^2\alpha^2L^2 m^2)\alpha^2n^2s_As_{A^m}^2\sum_{t=0}^{T-1}\EE[\norm{\overline{\nabla f}^{(t)}}_F^2]\\
    &\quad+ (36+2880\alpha^2L^2) s_As_Bs_{A^m}^2s_{B^m}  \alpha^2nL\Delta
\end{align*}
Using $\alpha \le \min\{\frac{1}{10\sqrt{s_Bs_{B^m}^2}cmL},\frac{1}{6L}\}$ and $m\ge 20 c^{-2}ns_As_Bs_{A^m}^2$, the coefficients can be simplified to 
\begin{align}\label{ieq(app):delta x-1}
    &\sum_{k\in[m^*]}\EE[\norm{\Delta_x^{(k)}}_F^2]\le 64\alpha^2s_As_B^2s_{B^m}^2s_{A^m}^2nK\sigma^2\\
    &\quad+7\alpha^2n^2s_As_{A^m}^2\sum_{t=0}^{T-1}\EE[\norm{\overline{\nabla f}^{(t)}}_F^2]+416\alpha^2s_As_Bs_{A^m}^2s_{B^m}  nL\Delta\nonumber
\end{align}
Taking~\eqref{ieq(app):delta x-1} back to~\eqref{ieq(app):delta y-1}, we obtain that
\begin{align*}
    &\sum_{k\in[m^*]}\EE[\norm{\Delta_y^{(k)}}_F^2]\le  (1+1280\alpha^2L^2mn^2s_As_B^2s_{B^m}^2s_{A^m}^2)16s_Bs_{B^m}^2Kn\sigma^2\\
    &\quad+(12+133120\alpha^2L^2s_As_B^2s_{B^m}s_{A^m}^2)s_{B^m}nL\Delta \\
    &\quad+160(c^2m^2s_Bs_{B^m}^2+140\alpha^2L^2n^2s_As_{A^m}^2s_Bs_{B^m}^2)\alpha^2L^2\sum_{t=0}^{T-1}\EE[\norm{\overline{\nabla f}^{(t)}}_F^2].\nonumber
\end{align*}
When $\alpha \le \frac{1}{10cm\sqrt{s_Bs_{B^m}^2}L}$ and $m\ge 10c^{-2}n^2s_As_{A^m}^2s_B$, we have
\begin{align}\label{ieq(app):delta y-2}
    \sum_{k\in[m^*]}\EE[\norm{\Delta_y^{(k)}}_F^2]&\le 32s_Bs_{B^m}^2Kn\sigma^2+26s_{B^m}nL\Delta\\
    &\quad+170 c^2m^2\alpha^2L^2s_Bs_{B^m}^2\sum_{t=0}^{T-1}\EE[\norm{\overline{\nabla f}^{(t)}}_F^2].\nonumber
\end{align}
Plug \eqref{ieq(app):delta x-1} into \eqref{ieq:delta 2-1}, we obtain that
\begin{align}\label{ieq:delta 2-2}
    &\sum_{k\in [m^*]}\EE[\Delta_2^{(k)}] \le (21c^2m^2n^{-2}+3200mns_As_B^2s_{A^m}^2s_{B^m}^2)\alpha^2nK\sigma^2\\
    &\quad+ 21280 s_As_Bs_{A^m}^2s_{B^m}\alpha^2mn^2L\Delta+(21c^2m+350n^3s_As_{A^m}^2)\alpha^2m\sum_{t=0}^{T-1}\EE[\norm{\overline{\nabla f}^{(t)}}_F^2].\nonumber\\
    &\le 181c^2m^2n^{-1}\alpha^2K\sigma^2 + 21280 s_As_Bs_{A^m}^2s_{B^m}\alpha^2mn^2L\Delta +38\alpha^2c^2m^2 \sum_{t=0}^{T-1}\EE[\norm{\overline{\nabla f}^{(t)}}_F^2],\nonumber
\end{align}
where the second inequality uses $m\ge 20c^{-2}n^3s_As_{A^m}^2s_B^2s_{B^m}^2$. Now we are ready to estimate the summation of error terms and prove Lemma~\ref{lem:consensus error}. 
Remember that $\Delta_1^{(k)}$ and $\Delta_2^{(k)}$ share the same upper bound, we utilize \eqref{ieq:delta 2-2}, \eqref{ieq(app):delta y-2} and obtain that
\begin{align*}
    &\quad\frac{2 L^2}{m}\sum_{k\in [m^*]}\EE[\Delta_1^{(k)}+\Delta_2^{(k)}]+ \frac{12 s_B^2}{c^2m^2}\sum_{k\in[m^*]}\EE[\norm{\Delta_y^{(k)}}_F^2]\\
    &\le C_{\sigma,1}  K\sigma^2+\frac{1}{m}C_{f,1}\sum_{t=0}^{T-1}\EE[\norm{\overline{\nabla f}^{(t)}}_F^2]+ C_{\Delta,1}\frac{ L\Delta}{m^2} \nonumber, 
\end{align*}
where
\begin{align*}
    C_{\sigma,1}&=724c^2mn^{-1}\alpha^2L^2+\frac{384s_B^3s_{B^m}^2}{m^2c^2}\\
    C_{f,1}&=152c^2\alpha^2L^2m^2+2040m\alpha^2L^2s_B^3s_{B^m}^2 \\
    C_{\Delta,1}&=85120 s_As_Bs_{A^m}^2s_{B^m}\alpha^2L^2m^2n^2+\frac{312s_B^2s_{B^m}n}{c^2}.
\end{align*}
When $\alpha \le \frac{1}{10cm\sqrt{s_Bs_{B^m}^2}L}$ and $m\ge c^{-2}30s_B^{1.5}$, we have 
\begin{equation}\label{eq:coefficients}
    C_{\sigma,1}\le \frac{73 c\alpha L}{n}+\frac{384s_B^3s_{B^m}^2}{m^2c^2}, \quad C_{f,1}\le 1, \quad C_{\Delta,1}\le \frac{10s_As_{A^m}^2n^2+312s_B^2s_{B^m}n}{c^2}.
\end{equation}
This finishes the proof of Lemma~\ref{lem:consensus error}. Finally, we leave a brief discussion on $s_{B^m}$. Recall that
\begin{align}\label{eq:s_B^m_def_in_appendix}
s_{B^m}= \max\{\sum_{i=0}^\infty \norm{B^{im}-B_\infty}_2, \sum_{i=0}^\infty \norm{B^{im}-B_\infty}_2^2\}
\end{align}
when $m$ increases, both $\sum_{i=1}^\infty \norm{B^{im}-B_\infty}_2$ and $\sum_{i=1}^\infty \norm{B^{im}-B_\infty}_2^2$ converge to zero exponentially fast, and $\norm{I-B_\infty}_2\ge 1$. Thus, $s_{B^m}$ converges to $\norm{I-B_\infty}_2^2$ exponentially fast as $m$ grows. \revision{The same argument applied to $A$ gives $s_{A^m}\to\norm{I-A_\infty}_2^2$ exponentially fast as $m$ grows.}

\subsection{Proof of Theorem~\ref{thm:main}}\label{app:subsec:main}
Sum up Lemma~\ref{lem:m-descent} for $k\in[m^*]$, then plug in Lemma~\ref{lem:consensus error}, we obtain that
\begin{align}\label{eq:dx-3}
    &\quad\sum_{k\in[m^*]}\EE[\norm{\nabla f(\hat{x}^{(k)})}^2]+\frac{1}{m}(1-C_{f,1})\sum_{t=0}^{T-1}\EE[\norm{\overline{\nabla f}^{(t)}}^2]\\
    &\le \frac{4(f(x^{(0)})-f(\hat{x}^{T}))}{\alpha cm}+\left(\frac{8\alpha c L}{n}+\frac{8\alpha Lns_B^2}{cm}+ C_{\sigma,1}\right)K\sigma^2+C_{\Delta,1}\frac{ L\Delta}{m^2}\nonumber.
\end{align}
Divide both sides by $K$ and utilize $C_{f,1}\le 1$, $f(x^{(0)})-f(\hat{x}^{T})\le \Delta$, $m\ge c^{-2}n^{2}s_B^2$ and $C_{\sigma,1}\le \frac{73 c\alpha L}{n}+\frac{384s_B^3s_{B^m}^2}{m^2c^2}$, $T=mK$, we have
\begin{align}\label{ieq(app):main-1}
    &\quad\frac{1}{K}\sum_{k\in [m^*]}\EE[\norm{\nabla f(\hat{x}^{(k)})}^2]\le \frac{4\Delta}{c\alpha T}  + \frac{100 c\alpha L}{n}\sigma^2+ \frac{384s_B^3s_{B^m}^2}{m^2c^2}\sigma^2 + C_{\Delta,1}\frac{L\Delta}{mT}.
\end{align}
Inequality~\eqref{ieq(app):main-1} holds when $\alpha \le \min\{\frac{1}{10cm\sqrt{s_Bs_{B^m}^2}},\frac{1}{6L}\}$ and $m\ge C_m$. 
Definition of $C_{\Delta,1}$ is given in \eqref{eq:coefficients}.
By choosing $m$ and $\alpha$ according to Theorem~\ref{thm:main}, we obtain the standard linear speedup result.



\section{Experiment Details}\label{appendix:experiment}

In this section, we supplement experiment setup details.
\subsection{Weight Matrix Design}\label{app:subsec:weight matrix}
We use the following procedure to generate a row/column-stochastic weight matrix. For a fixed number of nodes $n$, we begin by initializing a 0-1 adjacency matrix $W$ as the identity matrix $I$ to account for self-loops on each node. Subsequently, based on the specific network topology definition, we set $W_{i,j}=1$ if there is a directed edge from node $i$ to node $j$. After constructing this initial 0-1 matrix $W$, we then randomly assign integer weights from $1$ to $9$ to all entries where $W_{i,j}=1$. Finally, we perform either row normalization or column normalization on this weighted matrix to obtain a row-stochastic matrix $A$ or a column-stochastic matrix $B$, respectively.

\subsection{Synthetic data}\label{app:subsec:synthetic}
For experiments conducted on the synthetic dataset, we minimize the objective:
\begin{align}
    f(x)=\frac{1}{n}\sum_{i=1}^n f_i(x),\quad x\in \mathbb{R}^{d}, \label{eq::obj_func}
\end{align}
where
\begin{equation*}
    f_{i}(x)= \underbrace{\frac{1}{L_i}\sum_{l=1}^{L_i} \ln\left(1+\exp(-y_{i,l}h_{i,l}^\top x)\right)}_{\text{Logistic Loss}} +\underbrace{\rho\sum_{j=1}^d\frac{[x]_j^2}{1+[x]_j^2}}_{\text{Non-convex Regularization}},\quad L_i=L_{\text{Local}}/n. 
\end{equation*}

\textbf{Data Synthesis Process.} We generate the synthetic dataset through the following three-step procedure. \textbf{(1) Global Data Generation:} We first generate optimal parameters $x_{\text{opt}} \sim \mathcal{N}(0,I_d)$ and create a global feature matrix $H \in \mathbb{R}^{L_{\text{total}}\times d}$ with entries $h_{l,j} \sim \mathcal{N}(0,1)$. Labels $Y\in \mathbb{R}^{L_{\text{total}}}$ with $y_l \in \{-1,+1\}$ are computed via randomized thresholding:
\begin{equation}
    y_l = \begin{cases}
        1 & \text{if } 1/z_l > 1 + \exp(-h_l^\top x_{\text{opt}}) \\
        -1 & \text{otherwise}
    \end{cases},\quad z_l \sim \mathcal{U}(0,1)\nonumber
\end{equation}
\textbf{(2) Data Distribution:} We partition $H$ and $Y$ equally across $n$ nodes using $H^{(i)} = H[iM:(i+1)M,:]$ and $Y^{(i)} = Y[iM:(i+1)M]$ where $M = L_{\text{total}}/n$, requiring $L_{\text{total}}$ to be divisible by $n$. \textbf{(3) Local Initial Points:} Each node $i$ starts from $x_i^\star = x_{\text{opt}} + \varepsilon_i$ where $\varepsilon_i \sim \mathcal{N}(0,\sigma_h^2I_d)$ with $\sigma_h=10$.

\textbf{Gradient Computation.} At each iteration, node $i$ independently computes its stochastic gradient by randomly sampling a minibatch of size $B$ from its local dataset of size $L_i = L_{\text{total}}/n$:
\begin{equation}
    \nabla f_{i,B}(x) = \underbrace{-\frac{1}{B}\sum_{b=1}^B \frac{y_{i,b}h_{i,b}}{1+\exp(y_{i,b}h_{i,b}^\top x)}}_{\text{Logistic Loss Gradient}} + \underbrace{\rho\sum_{j=1}^d \frac{2[x]_j}{(1+[x]_j^2)^2}}_{\text{Non-convex Regularization}}\nonumber
\end{equation}
where the minibatch $\{h_{i,b}, y_{i,b}\}_{b=1}^B$ is drawn uniformly without replacement from the $L_i$ local samples. The gradient at each node $i$ is computed on its local parameter $x_i \in \mathbb{R}^{d}$.

\textbf{Implementation Details.} We use global dataset size $L_{\text{total}}=204800$ with batch size $B = 200$, dimension $d=10$, and regularization parameter $\rho=0.01$. Node configurations include $n \in \{4,6,8,9,12,16,18,24,25\}$ with least common multiple (LCM) of $3,600$. Note that $L_{\text{total}}=1,440,000=3,600\cdot200\cdot2=\text{LCM}\cdot B \cdot 2$.

\textbf{Evaluation Metric.} We track the gradient norm $\|n^{-1}\one^\top \nabla f(\vx^{(k)})\|$ where
\begin{align*}
    n^{-1}\one^\top \nabla f(\vx^{(k)}) = \frac{1}{n}\sum_{i=1}^n \nabla f_{i,M}(\bm{x}_i^{(k)}), \quad M=L_{\text{Total}}/n.
\end{align*}
Results represent averages over 20 independent trials with fixed random seed 42 for reproducibility.

\subsection{Neural Network Experiments}\label{appendix::exp_mnist}
We evaluate our \pushpull method on two benchmark tasks: MNIST classification using three-layer fully connected networks and CIFAR-10 classification using ResNet-18.

\textbf{Data Distribution.} To simulate heterogeneous data distributions across nodes, we employ Dirichlet sampling. For each node $k \in \{1, \dots, n\}$, we sample a class proportion vector $\mathbf{q}^k = (q_{k,0}, q_{k,1}, \dots, q_{k,9}) \sim \text{Dirichlet}(\alpha \cdot \mathbf{1}^{10})$, where $\mathbf{1}^{10}$ is a 10-dimensional vector of ones. Each $q_{k,c}$ represents the proportion of samples from class $c$ assigned to node $k$, satisfying $q_{k,c}\ge 0$ and $\sum_{c=0}^{9} q_{k,c}=1$. The target number of samples of class $c$ for node $k$ is $N_{k,c}^{\text{target}} = q_{k,c}\cdot M/n$, resulting in approximately $60,000/n$ samples per node for MNIST and $50,000/n$ for CIFAR-10. We set $\alpha = 0.9$ to create heterogeneous distributions, as illustrated in Figure~\ref{fig::heat_map} for 16 nodes.

\textbf{Evaluation Metrics.} We measure convergence using the normalized gradient norm computed after each batch. Specifically, we average the gradients from all $n$ nodes and normalize by the square root of the total parameter count:
\begin{align}
    \left\| \frac{1}{n}\sum_{i=1}^n \mathbf{g}_{i,B_j} \right\|_2\Big/ \sqrt{P}, \label{eq::avg_grad_norm}
\end{align}
where $\mathbf{g}_{i,B_j}$ is the gradient computed on the $j$-th batch of node $i$, and $P$ is the number of parameters. For MNIST with three-layer FCNs, $P=1,863,690$; for CIFAR-10 with ResNet-18, $P=11,173,962$.

\textbf{Experimental Setup.} We test our method on various network topologies including directed exponential and grid graphs, as well as undirected random graphs. Training uses learning rate $lr=0.005$ and batch size $B=128$ for MNIST, $B=128$ for CIFAR-10, consistent across all topologies and node configurations. Performance is compared against single-node baseline training ($n=1$). Figures~\ref{fig::linear_speedup_synthetic} and \ref{fig::linear_speedup_mnist} demonstrate that Push-Pull achieves linear speedup on both synthetic data and real datasets across directed and undirected topologies. Figure~\ref{fig::acc_cifar10} shows the validation accuracy comparison for CIFAR-10.

\begin{figure}[ht]
    \centering
    \includegraphics[width=0.85\linewidth]{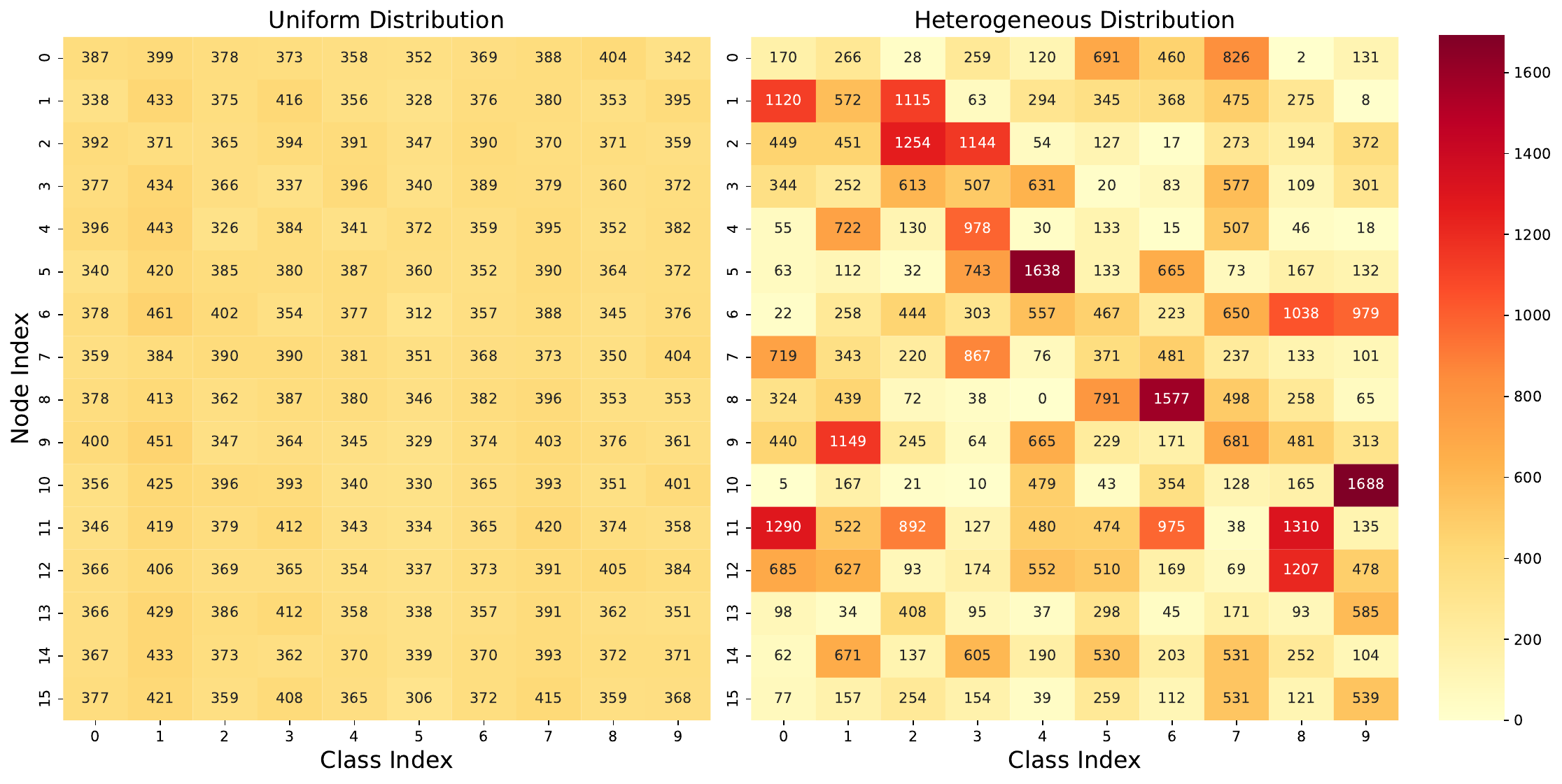}
    \caption{Comparison of uniform (left) and heterogeneous (right) data distributions across $16$ nodes ($\alpha = 0.9$).}
    \label{fig::heat_map}
\end{figure}

\section{Network Topology Visualizations}\label{app:sec:topologies}

Figure~\ref{fig::topologies} visualizes the six network topologies used in our experiments.
Panels~(a)--(d) show directed graphs with arrows indicating the direction of information flow;
panels~(e)--(f) show undirected graphs where each edge denotes bidirectional communication.

\begin{figure}[ht]
\centering
\begin{minipage}{0.31\linewidth}\centering
\includegraphics[width=\linewidth]{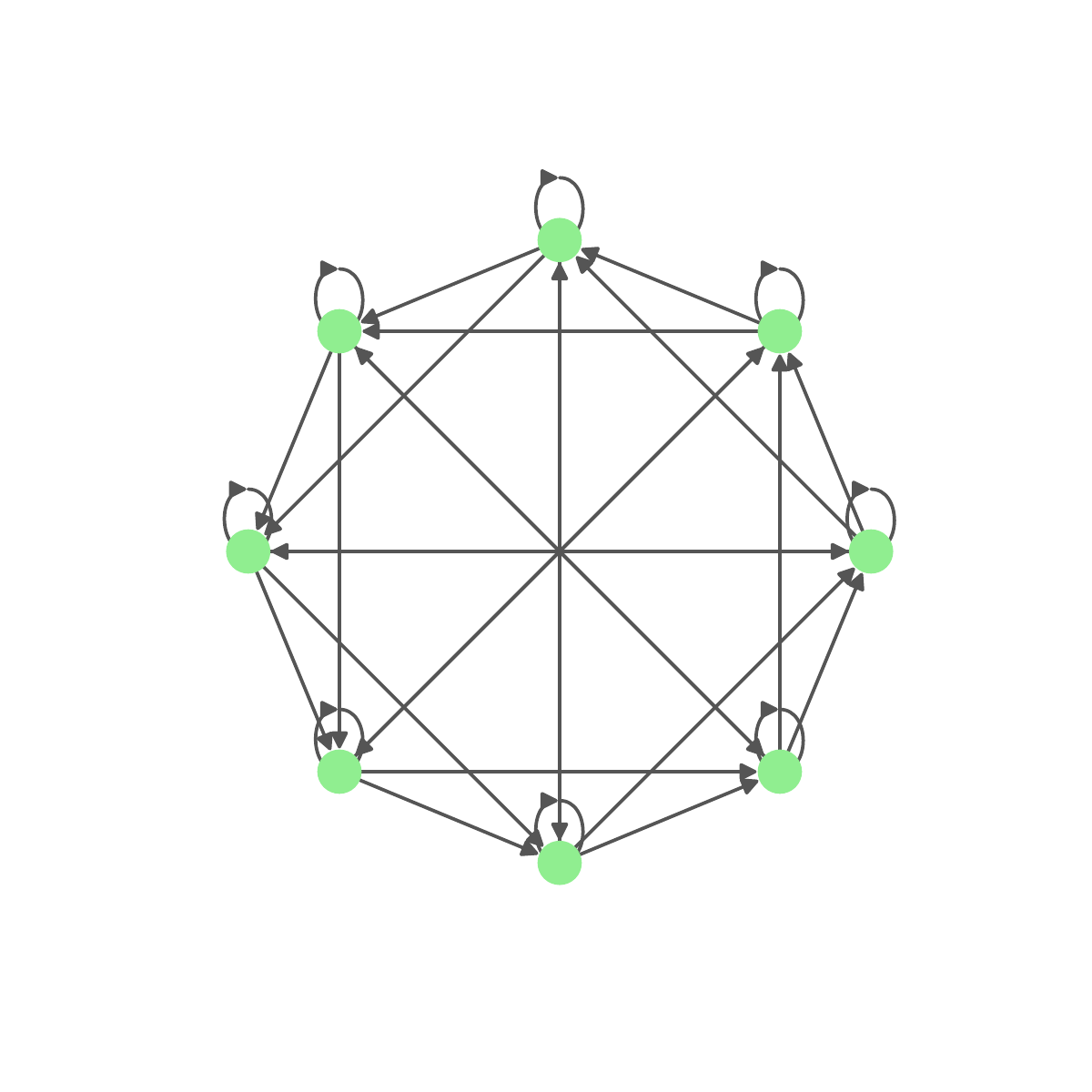}\\
\footnotesize (a) Exponential, $n=8$
\end{minipage}\hfill
\begin{minipage}{0.31\linewidth}\centering
\includegraphics[width=\linewidth]{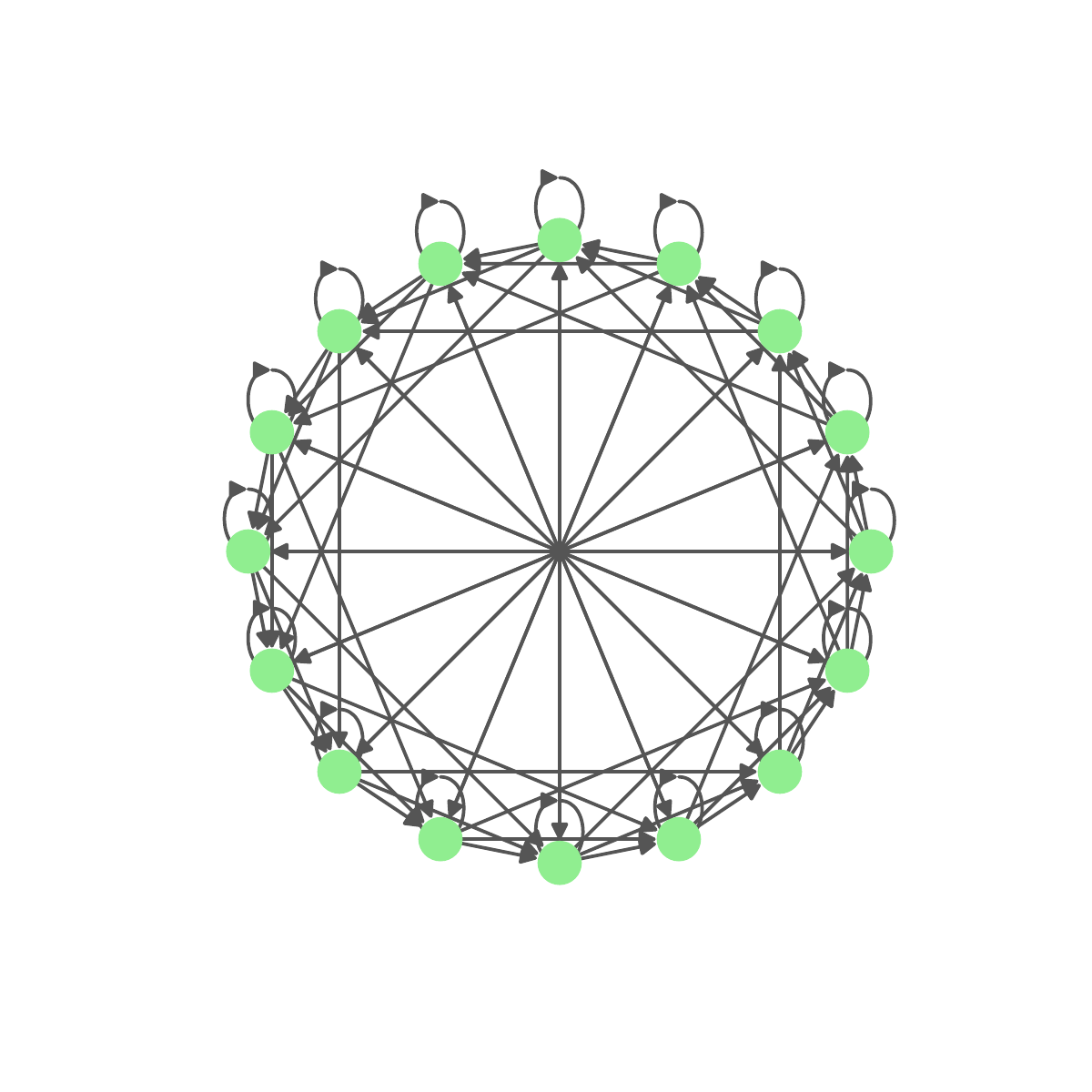}\\
\footnotesize (b) Exponential, $n=16$
\end{minipage}\hfill
\begin{minipage}{0.31\linewidth}\centering
\includegraphics[width=\linewidth]{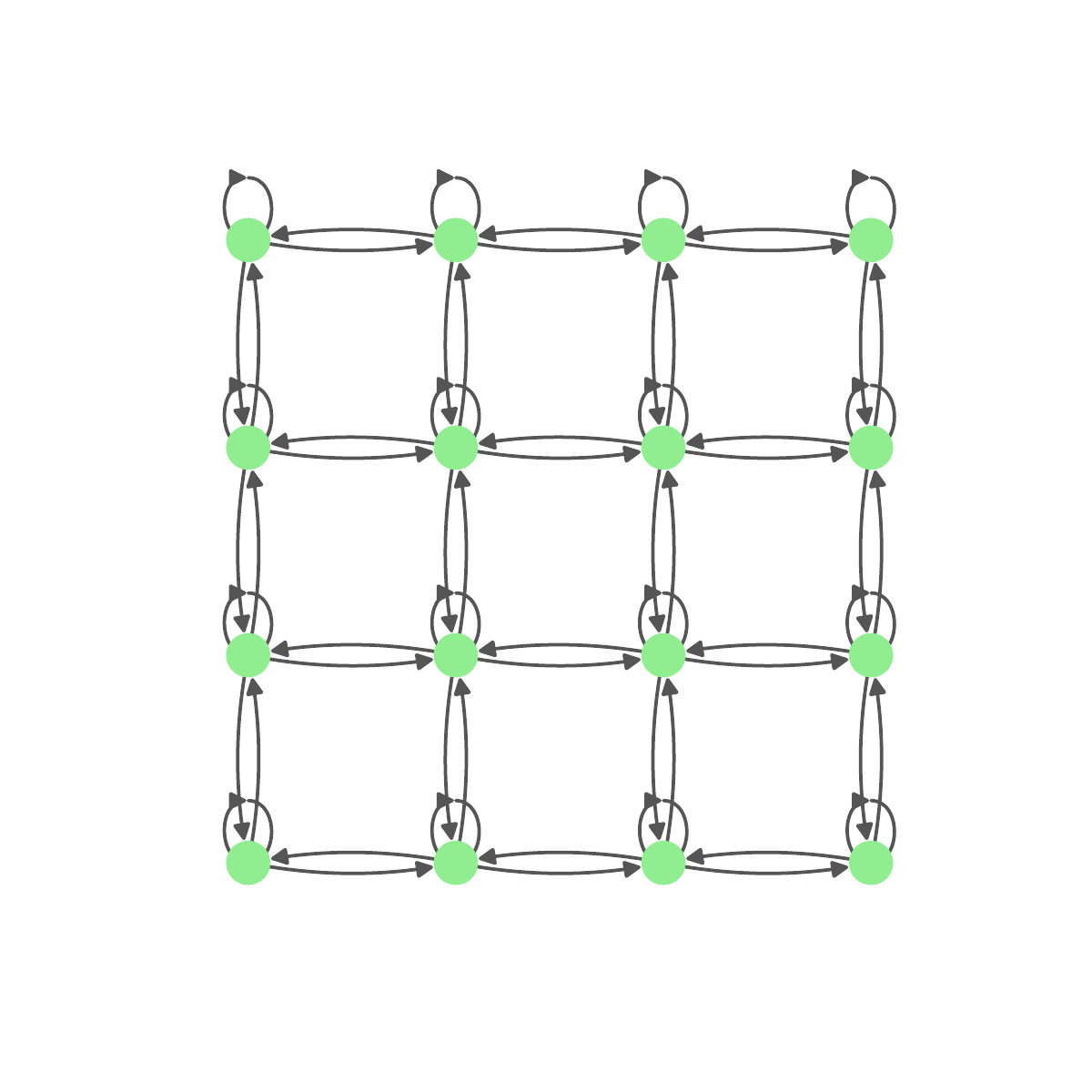}\\
\footnotesize (c) Grid, $n=16$
\end{minipage}

\vspace{2mm}

\begin{minipage}{0.31\linewidth}\centering
\includegraphics[width=\linewidth]{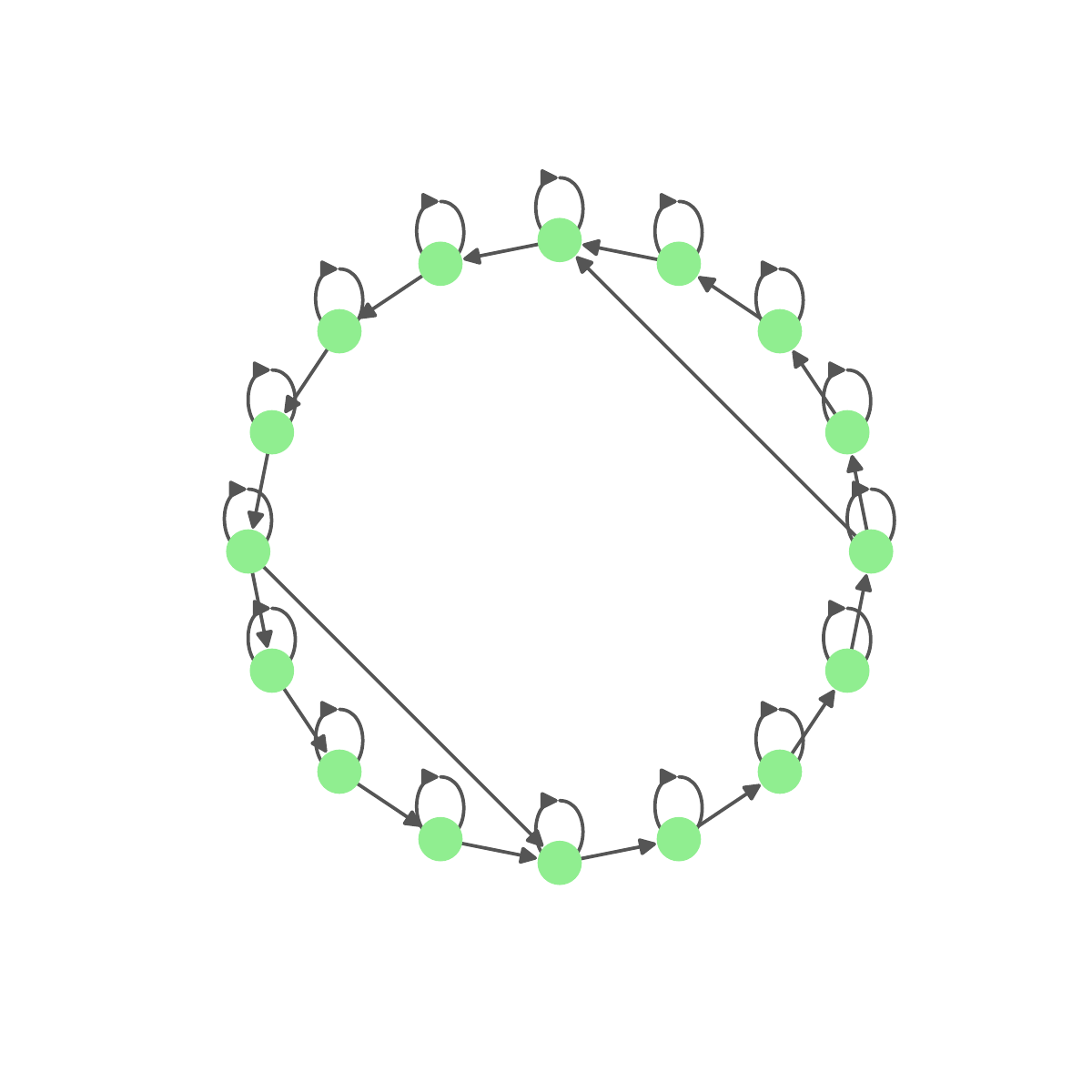}\\
\footnotesize (d) Ring, $n=16$
\end{minipage}\hfill
\begin{minipage}{0.31\linewidth}\centering
\includegraphics[width=\linewidth]{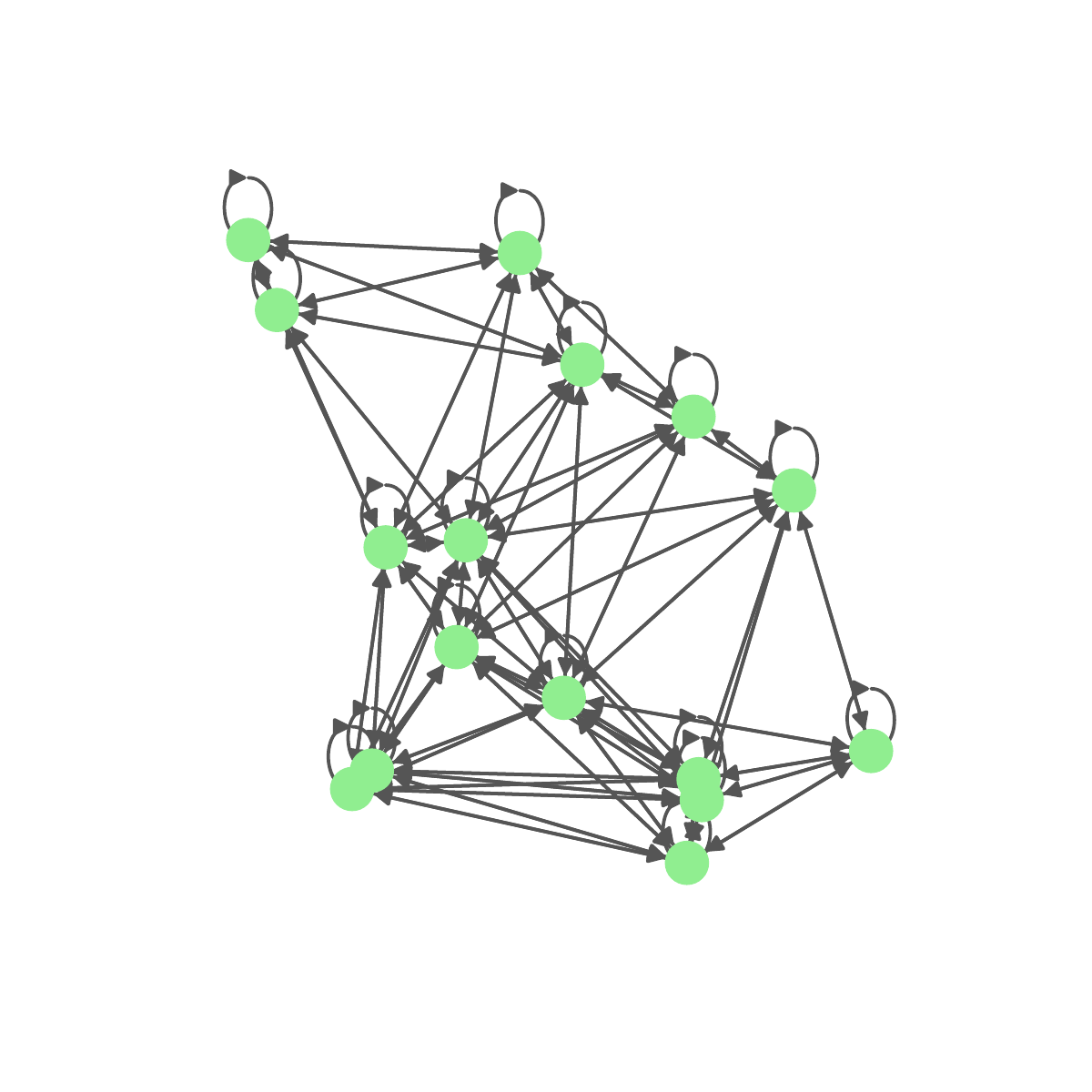}\\
\footnotesize (e) Geometric, $n=16$
\end{minipage}\hfill
\begin{minipage}{0.31\linewidth}\centering
\includegraphics[width=\linewidth]{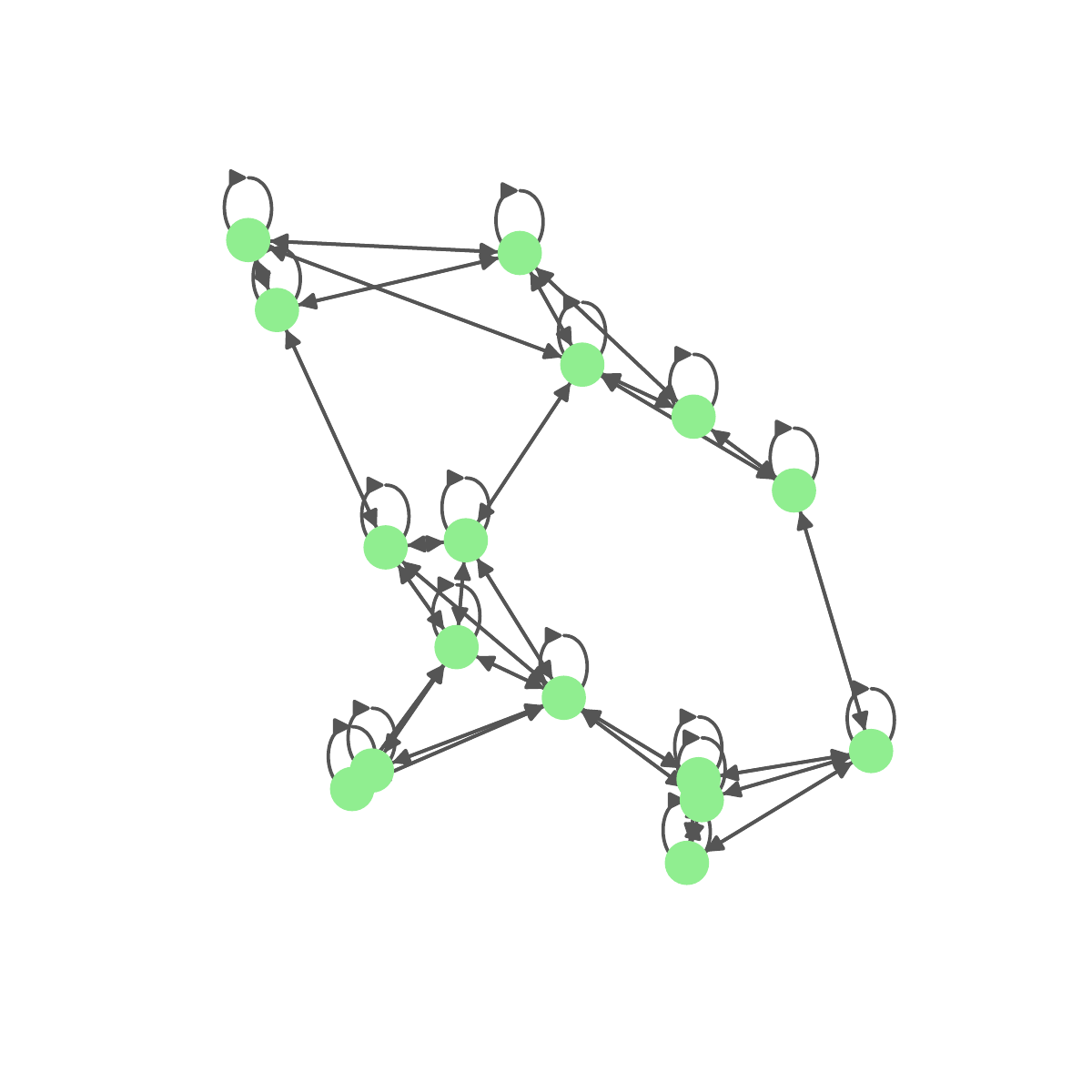}\\
\footnotesize (f) Nearest-neighbor, $n=16$
\end{minipage}
\caption{Visualization of the network topologies used in our experiments.
Panels~(a)--(d) display the directed graphs (exponential, grid, ring) with arrows indicating
the direction of communication; panels~(e)--(f) display the undirected graphs (geometric,
nearest-neighbor) where each edge denotes bidirectional communication.
The exponential graph is shown at both $n=8$ and $n=16$ to illustrate how the topology
scales with the number of nodes.}
\label{fig::topologies}
\end{figure}

\end{document}